\documentclass[11pt,oneside]{amsart}

\usepackage[left=1in,right=1in,top=1in,bottom=1in,marginparwidth=0.8in]{geometry}
\usepackage{microtype}  

\usepackage{mathtools}
\usepackage{bbm}  

\swapnumbers

\usepackage{hyperref}

\usepackage{amssymb,amsfonts,amsmath,amsthm}
\usepackage[mathscr]{eucal}
\usepackage[alphabetic]{amsrefs}
\usepackage{stmaryrd}
\usepackage{colonequals}

\usepackage[ps,matrix,arrow,curve,cmtip]{xy}

\title{The Witt filtration of  Lubin-Tate deformation rings}
\author{Charles Rezk}
\date{ \today}
\address{Department of Mathematics \\
University of Illinois Urbana-Champaign \\ 
Urbana, IL}
\email{rezk@illinois.edu}

\numberwithin{equation}{section}

\makeatletter
  \let\c@subsection\c@equation
\makeatother

\theoremstyle{plain}   %% This is the default, anyway

\newtheorem{thm}[subsection]{Theorem}
\newtheorem*{thm*}{Theorem}
\newtheorem{prop}[subsection]{Proposition}
\newtheorem*{prop*}{Proposition}
\newtheorem{cor}[subsection]{Corollary}
\newtheorem*{cor*}{Corollary}
\newtheorem{lemma}[subsection]{Lemma}
\newtheorem*{lemma*}{Lemma}

\newtheorem*{claim*}{Claim}

\theoremstyle{remark}
\newtheorem{rem}[subsection]{Remark}    
\newtheorem*{rem*}{Remark}
\newtheorem{warn}[subsection]{Warning}
\newtheorem*{warn*}{Warning}
\newtheorem{exam}[subsection]{Example}
\newtheorem*{exam*}{Example}
\newtheorem*{ques*}{Question}

\theoremstyle{plain}

\begin{document}

% Note: use "\newcommand*" for macros with arguments whose arguments
% are not "long", i.e., do not contain blank lines or \par.

% For temporary questions.  For example, \margnote{This is something
% I'm confused about.} puts that message in the margin.
\newcommand{\margnote}[1]{\mbox{}\marginpar{\tiny\hspace{0pt}#1}}

\def\lambada{\lambda}

%%% standard operators for mathematics

% general categorical things
\newcommand{\id}{\operatorname{id}}
\newcommand{\colim}{\operatorname{colim}}
\newcommand{\llim}{\operatorname{lim}}
\newcommand{\Cok}{\operatorname{Cok}}
\newcommand{\Ker}{\operatorname{Ker}}
\newcommand{\Image}{\operatorname{Im}}
\newcommand{\op}{{\operatorname{op}}}
\newcommand{\Aut}{{\operatorname{Aut}}}
\newcommand{\End}{{\operatorname{End}}}
\newcommand{\Hom}{{\operatorname{Hom}}}

% shortcuts for arrows
\newcommand*{\ra}{\rightarrow}
\newcommand*{\lra}{\longrightarrow}
\newcommand*{\xra}{\xrightarrow}
\newcommand*{\la}{\leftarrow}
\newcommand*{\lla}{\longleftarrow}
\newcommand*{\xla}{\xleftarrow}

% homotopy theory
\newcommand{\ho}{\operatorname{ho}}
\newcommand{\hocolim}{\operatorname{hocolim}}
\newcommand{\holim}{\operatorname{holim}}

% macros for standard mathematical notations
\newcommand*{\realiz}[1]{\left\lvert#1\right\rvert}
\newcommand*{\len}[1]{\left\lvert#1\right\rvert}
\newcommand*{\Len}[1]{\left\lVert#1\right\rVert}
\newcommand{\set}[2]{{\{\,#1\,\mid\,#2\,\}}}
\newcommand{\bigset}[2]{\left\{\,#1\;\middle|\;#2\,\right\}}
\newcommand*{\tensor}[1]{\underset{#1}{\otimes}}
\newcommand*{\pullback}[1]{\underset{#1}{\times}}
\newcommand*{\powser}[1]{[\![#1]\!]}
\newcommand*{\laurser}[1]{(\!(#1)\!)}
\newcommand{\ndiv}{\not|}
\newcommand{\pairing}[2]{\langle#1,#2\rangle}

\newcommand{\lrtensor}[3]{{\mathstrut}^{#1}\!\otimes_{#2}\!{\mathstrut}^{#3}}
\newcommand{\ltensor}[2]{{\mathstrut}^{#1}\!\otimes_{#2}}
\newcommand{\rtensor}[2]{\otimes_{#1}\!{\mathstrut}^{#2}}

% some standard rings and fields
\newcommand{\F}{\mathbb{F}}
\newcommand{\Z}{\mathbb{Z}}
\newcommand{\N}{\mathbb{N}}
\newcommand{\R}{\mathbb{R}}
\newcommand{\Q}{\mathbb{Q}}
\newcommand{\C}{\mathbb{C}}

% topology
\newcommand{\point}{{\operatorname{pt}}}
\newcommand{\Map}{\operatorname{Map}}
\newcommand{\eev}{\wedge}
\newcommand{\sm}{\wedge} % smash product

% bb bold one
\newcommand{\bbone}{\mathbbm{1}}

% typesetting
\newcommand*{\mc}{\mathcal}
\newcommand*{\msc}{\mathscr}
\newcommand*{\mf}{\mathfrak}
\newcommand*{\mr}{\mathrm}
\newcommand*{\mb}{\mathbb}
\newcommand*{\ul}{\underline}
\newcommand*{\ol}{\overline}
\newcommand*{\wt}{\widetilde}
\newcommand*{\wh}{\widehat}
\newcommand*{\mtt}{\mathtt}
\newcommand*{\ms}{\mathsf}
\newcommand*{\mbf}{\mathbf}

% for defined words
\newcommand{\dfn}{\textbf}

% a ``backwards'' colon
\def\noloc{\;{:}\,}

% defining equals
\newcommand{\defeq}{\colonequals}
%\def\defeq{\overset{\mathrm{def}}=}

% to force a paragraph break at the start of theorems and proofs
\newcommand{\forcepar}{\mbox{}\par}

\renewcommand{\phi}{\varphi}

\newcommand{\Ab}{\mr{Ab}}

\renewcommand{\Im}{\operatorname{Im}}

\newcommand{\Res}{\operatorname{Res}}
\newcommand{\Tr}{\operatorname{Tr}}
\newcommand{\Ind}{\operatorname{Ind}}
\newcommand{\trJ}{J^{\Tr}}

\newcommand{\Spec}{\operatorname{Spec}}

\newcommand{\Set}{\mr{Set}}
\newcommand{\CAlg}{\mr{CAlg}}
\newcommand{\Sp}{\mr{Sp}}
\newcommand{\Mod}{\mr{Mod}}

\newcommand{\Alg}{\mr{Alg}}
\newcommand{\T}{\mathbb{T}}

\newcommand{\Sym}{\operatorname{Sym}}

\renewcommand{\O}{\mathcal{O}}
\newcommand{\m}{\mathfrak{m}}

\newcommand{\Witt}{\mathbb{W}}
\newcommand{\Wideal}{\mb{I}}
\newcommand{\gh}{\operatorname{gh}}

\newcommand{\Fr}{\mr{Fr}}

\newcommand{\Cent}{\operatorname{Cent}}
\newcommand{\AH}{\mr{AH}}

\newcommand{\length}{\operatorname{length}}

\newcommand{\Mark}{\operatorname{Mark}}
\newcommand{\oA}{\mathring{A}}

\newcommand{\Sip}{\Sigma^\infty_+}

\newcommand{\tr}{\operatorname{tr}}

\newcommand{\cnj}{\mr{conj}}

%%% abstract
\begin{abstract}
This note is a meditation on a generalization $\Witt_E$ of the classical
p-typical Witt vectors $\Witt_p$, which arises (geometrically) from
isogenies of 
deformations of formal groups, or (topologically) from the theory of
power operations on Morava $E$-theory.   For formal
groups of height 
$1$  we have $\Witt_E=\Witt_p$, but the $\Witt_E$ are richer when
height is $\geq 2$. We show that $\Witt_p$ splits naturally from
$\Witt_E$.  

A key property of $\Witt_E$ is the isomorphism 
$\pi_0E\approx \Witt_E(\pi_0E/\m)$, the ``cofreeness of the Morava
$E$-theory'' proved by Burklund, Schlank, and Yuan.  This isomorphism
determines a natural ``Witt filtration'' on $\pi_0 E$.   We describe how
this Witt filtration interpolates between the $p$-adic filtration and a
geometric filtration on $\pi_0E/(p)$.  We use this to give a new proof
of cofreeness.
\end{abstract}

%%% the title
\maketitle

%%% table of contents
% \tableofcontents

\section{Introduction}

In \cite{lubin-tate-formal-moduli} Lubin and Tate identified the space
of deformations of 1-dimensional commutative formal groups of  finite
height $h\geq 1$: given such a formal group $\Gamma$ over a perfect
field $\kappa$, its \dfn{universal deformation} is defined over a ring
$\O$ isomorphic to a power series ring
$\Witt_p\kappa\powser{u_1,\dots,u_{h-1}}$.  Morava observed
\cite{morava-noetherian-loc-cobordism} that there is a complex
orientable cohomology theory associated to any such universal
deformation, now called \dfn{Morava $E$-theory} or \dfn{Lubin-Tate
  theory}, with the property that $\pi_*E\approx \O[\mu^{\pm}]$ with
$\len{\mu}=2$.  Goerss, Hopkins, and Miller
\cite{goerss-hopkins-moduli-spaces} proved that the representing
spectrum of any Morava $E$-theory admits an essentially unique
commutative ring structure, and that it functorial in formal groups.
In particular, $E$ admits an action by the automorphism group of
$\Gamma/\kappa$.  

This commutative ring structure on $E$ endows it (and its cohomology theory)
with additional structure, namely \emph{power operations}, whose
structure has been explored in depth 
\cite{strickland-finite-subgroups-of-formal-groups},
\cite{strickland-morava-e-theory-of-symmetric}, 
\cite{ando-hopkins-strickland-h-infinity},
\cite{rezk-congruence-condition}. 
In particular, as shown in \cite{rezk-congruence-condition}, if $R\in
\CAlg_E(\Sp_{K(n)})$, then $\pi_0R \in \Alg_{\T}$, where the latter
denotes the category of algebras over a certain monad $\mathbb{T}$ on
the category 
$\CAlg_\O$ of (ordinary) commutative 
$\O$-algebras.
As noted in \cite{rezk-congruence-condition}*{4.23}, the forgetful
functor $\Alg_\T\ra \CAlg_\O$ is both \emph{monadic} and  \emph{comonadic}.

The goal of this  note is to explore  the underlying functor of this
comonad, which we 
denote $\Witt_E\colon \CAlg_\O\ra \CAlg_\O$ and call the
\dfn{Lubin-Tate-Witt vectors}, or more colloquially the
\dfn{$E$-typical Witt vectors}, to indicate its dependence on a choice
of Morava $E$-theory.   The name is chosen to reflect a connection to
the classical $p$-typical Witt 
vectors.  It was introduced in
\cite{burklund-schlank-yuan-chromatic-nullstellensatz}, where it is
denoted $\Witt_\T$. 

\begin{exam}[Lubin-Tate-Witt vectors at height 1]
Let  $\Gamma$ be a height 1 formal group law over $\kappa$, and $E$ its
associated Morava $E$-theory spectrum.  Then $\O=\Witt_p\kappa$, and
there is an isomorphism of functors $\Witt_E\approx \Witt_p$, where
$\Witt_p$ denotes the classical $p$-typical Witt vectors (restricted
to $\CAlg_{\Witt_p\kappa}$).  
Furthermore, the comonad structure on $\Witt_E$ recovers the theory of
\dfn{$\delta$-rings} ($\O$-algebras equipped with a
$p$-derivation) \cite{joyal-delta-rings}. 
\end{exam}

A striking feature of the $E$-typical Witt vectors is the following
result.

\begin{thm}[Cofreeness of the Lubin-Tate ring
  \cite{burklund-schlank-yuan-chromatic-nullstellensatz}*{Thm.\ 3.4}]
  \label{thm:cofreeness-of-O}
  The $\O$-algebra structure map $\O\ra 
  \Witt_E(\kappa)$ is an isomorphism.
\end{thm}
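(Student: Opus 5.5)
The plan is to make $\Witt_E(\kappa)$ explicit through the adjunction that defines it, and then compare it with $\O$ one filtration layer at a time.

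First, since the forgetful functor $U\colon \Alg_\T\ra\CAlg_\O$ is comonadic, $\Witt_E$ is $U$ composed with its right adjoint. So for the free $\T$-algebra $\T(\O[x])$ on one generator we get natural bijections
\[
\Witt_E(R)\approx \Hom_{\CAlg_\O}(\O[x],\Witt_E R)\approx \Hom_{\CAlg_\O}(U\T(\O[x]),R).
\]
In other words, $\Witt_E$ is corepresented by the $\O$-algebra $U\T(\O[x])$. The ring and $\O$-algebra operations on $\Witt_E$ correspond to a co-ring structure on this corepresenting object.

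Next, I would use the known structure of the monad $\T$ (Strickland, and \cite{rezk-congruence-condition}). The ring $\T(\O[x])$ is a suitable completion of a ring assembled from the rings $A_k$ that classify subgroups of order $p^k$ in the universal deformation. The operation $\psi_G$ attached to a subgroup $G$ sends $x$ to the coordinate indexed by $G$. An $\O$-algebra map $U\T(\O[x])\ra\kappa$ is then a compatible system of values, one at each point of $\Spec(A_k\otimes\kappa)$. Over $\kappa$ the only subgroup of order $p^k$ is the kernel of the $k$-th Frobenius, so $A_k\otimes\kappa$ is local with residue field $\kappa$, possibly with nilpotents. These nilpotent directions are what can make $\Witt_E(\kappa)$ larger than $\kappa$.

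I then filter $\Witt_E(\kappa)$ by the Witt filtration: $F^k$ is the set of maps killing the part of $\T(\O[x])$ generated by operations of "degree $<k$". The first step is to check that $\Witt_E(\kappa)$ is complete and separated for this filtration and that $F^0/F^1\approx\kappa$, which comes from evaluation at the identity operation. The second step is to identify $F^k/F^{k+1}$ using the Frobenius congruence $\psi(x)\equiv x^p \pmod p$, which holds in every $\T$-algebra. I expect this identification to show that the associated graded of $\Witt_E(\kappa)$ is a free module over $\operatorname{gr}$ of $\O$ with respect to the filtration $(p,u_1,\dots,u_{h-1})$. Concretely, multiplication by $p$ should map layer $k$ isomorphically onto layer $k+1$ modulo the geometric filtration on $\O/p$, which is the sense in which the Witt filtration interpolates between the $p$-adic and the geometric filtrations. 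Finally, the structure map $\O\ra\Witt_E(\kappa)$ respects the filtrations. If it induces an isomorphism on each associated graded piece, completeness of both sides gives the theorem.

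The main obstacle is the graded comparison, and specifically surjectivity. Injectivity should be comparatively easy: a nonzero element of $\m^k/\m^{k+1}$ should be detected by the deformation data on some $A_j\otimes\kappa$. Surjectivity, however, requires showing that every compatible system of nilpotent values on the rings $A_k\otimes\kappa$ arises from an element of $\O$. For this I would first try the universal property of $\O$ as the Lubin–Tate deformation ring. A point of $\Witt_E(\kappa)$ should give a deformation of $\Gamma$ over $\Witt_E(\kappa)$ itself, defined using the Frobenius isogenies as descent data. That deformation should then be classified by a map $\O\ra\Witt_E(\kappa)$, and this map should be inverse to the structure map. The delicate point is checking that the $\T$-algebra structure supplies precisely this descent data, with nothing left over. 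If this fails in general, I would fall back on height $1$, where $\Witt_E=\Witt_p$ and the theorem reduces to $\Witt_p(\kappa)$ for perfect $\kappa$, and use it as a template for a layer-by-layer inductive count.
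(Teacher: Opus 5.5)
Your proposal has a genuine gap, and the graded comparison at its core fails for $h\geq 2$.

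If $\O$ is filtered by powers of $\m=(p,u_1,\dots,u_{h-1})$, the layers cannot match. $\Wideal_k/\Wideal_{k+1}$ has $\kappa$-dimension $N^h_{p^k}$ (e.g.\ $p+1$ for $h=2$, $k=1$), while $\m^k/\m^{k+1}$ has dimension $\binom{k+h-1}{h-1}$ (e.g.\ $2$). In fact the structure map does not even carry $\m^k$ into $\Wideal_k$, because the Witt filtration is not multiplicative. In the height $2$, $p=2$ example, $\O/I_2=\Z_2\powser{a}/(4,2a,a^3-2)$, so $a^2\in\m^2$ but $a^2\notin I_2$.

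If instead you filter $\O$ by the preimages of the $\Wideal_k$, graded injectivity is automatic and graded surjectivity is just a restatement of the theorem. Everything then rests on your surjectivity step, which does not work as described. No formal group over $\Witt_E(\kappa)$ is actually produced from ``Frobenius descent data''. More basically, a map classifying a deformation over $\Witt_E(\kappa)$ goes $\O\to\Witt_E(\kappa)$, the same direction as the structure map. It cannot be an inverse to it; at best it coincides with it, which says nothing about bijectivity. The height-$1$ template does not extend either, because $\Wideal_1\not\subseteq p\Witt_E(\kappa)$ once $h\geq 2$.

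The missing idea is to work $p$-adically rather than $\m$-adically, and to compare both sides modulo $p$ with $\llim_d \O_d\otimes_\O\kappa$. The paper's argument needs three inputs you do not have.

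\textbf{First}, $\Witt_E(\kappa)$ is $p$-torsion free with $p\Wideal_d=\Wideal_{d+1}\cap p\Witt_E(\kappa)$. This holds because the image of $[p]$ on $\ol T=T\otimes_\O\kappa$ is exactly the subring of $p$th powers. That in turn is proved using the self-duality $T\cong T^\vee$ of the Hopf algebra (from $K(n)$-local ambidexterity), which turns injectivity of Frobenius on the polynomial ring $\ol T$ into surjectivity of Verschiebung. The congruence $\psi(x)\equiv x^p$ you invoke does not give this.

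\textbf{Second}, the maps $\ol t_d\colon\O/p\to\O_d\otimes_\O\kappa$ are surjective (Burklund--Schlank--Yuan), and $\dim_\kappa\O_d\otimes_\O\kappa=N^h_{p^d}$. With the first input and a length count this yields exact sequences $0\to\Witt_E(\kappa)/\Wideal_d\xra{p}\Witt_E(\kappa)/\Wideal_{d+1}\xra{\gh_d}\O_d\otimes_\O\kappa\to0$. Passing to the limit gives $\Witt_E(\kappa)/p\cong\llim\O_d\otimes_\O\kappa$.

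\textbf{Third}, $\O/p\cong\llim\O_d\otimes_\O\kappa$. The nontrivial point is that the kernels of $\ol t_d$ eventually lie in any power of $\ol\m$. This uses Drinfeld's lifting of isomorphisms up to a power of $p$ over nilpotent thickenings, and it is the correct version of your ``injectivity'' step.

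A $p$-adic Nakayama-type lemma then upgrades the mod-$p$ isomorphism to $\O\cong\Witt_E(\kappa)$.
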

This was was proved by Burklund, Schlank, and Yuan in 2022 as one
component of their proof of the ``chromatic Nullstellensatz''.  The
author also gave a proof of this result (announced in 2019, but not
published), which will appear in a companion paper to this
\cite{rezk-cofreeness}.  Another proof was recently announced by Akhil
Mathew, using the theory of animated $p$-derivations.  We will give
yet another proof of cofreeness in this paper
(\S\ref{sec:witt-filt-lt-cofreeness}), different from our first but
similar in spirit to that of
\cite{burklund-schlank-yuan-chromatic-nullstellensatz}.  
  
``Cofreeness'' has interesting implications for the Lubin-Tate ring
and Morava $E$-theory.  The $E$-typical Witt vector
functor admits a canonical descending filtration $\Witt_E\supseteq
\Wideal_1\supseteq \Wideal_2\supseteq \cdots$ by ideals, so that
$\Witt_E\approx \llim\Witt_E/\Wideal_d$ and $\bigcap\Wideal_d=0$.
Cofreeness implies that this filtration is inherited by the Lubin-Tate
ring $\O$, and we refer to this (following \cite{burklund-schlank-yuan-chromatic-nullstellensatz}) as the \dfn{Witt filtration} of $\O$. 
We emphasize that the Witt filtration is \emph{functorial} in the formal group: in particular it is 
compatible with the action by $\Aut(\Gamma/\kappa)$ on $\O$.

For formal groups of height $h=1$, this is just $p$-adic filtration of
the $p$-typical Witt vectors of a perfect field:
$\Wideal_d=p^d\Witt_p\kappa$.  
However, when
$h>1$ the     
Witt filtration on $\O$ is  more subtle.

In fact, we will prove
the following (as \eqref{prop:witt-filtration-restriction},
\eqref{thm:witt-filtration-compat-with-p} and 
\eqref{prop:witt-trivial-quotient-filtration}, and using cofreeness). 

\begin{thm*}
  The Witt filtration $\{\Wideal_d\}$ of the Lubin-Tate ring $\O$ satisfies
  \begin{itemize}
  \item $\Wideal_d\cap \Witt_p\kappa= p^d\Witt_p\kappa$,
  \item $p\Wideal_d = \Wideal_{d+1}\cap p\O$,
  \item $\O/(p,\Wideal_{d+1})$ 
carries the universal example of a
    deformation of $\Gamma/\kappa$ to a ring of characteristic $p$
    which is isomorphic to $\Gamma^{(p^d)}/H$ for some finite subgroup
    scheme $H\leq 
    \Gamma^{(p^d)}$ of rank $p^d$. 
  \end{itemize}
\end{thm*}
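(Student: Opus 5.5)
We will prove the three assertions separately, working first at the level of the functor $\Witt_E$ on $\CAlg_\O$ and only at the end transporting to $\O$ via Theorem~\ref{thm:cofreeness-of-O}, i.e., via $\O\approx\Witt_E(\kappa)$. The basic input is the description of $\Witt_E(A)$ as the cofree $\T$-algebra on $A$. Concretely, a $\T$-algebra structure is encoded by the power operations $\psi^H$, one for each finite subgroup $H$ of the universal deformation. We take $\Wideal_d\subseteq\Witt_E(A)$ to be the kernel of the map recording the operations attached to subgroups of rank $<p^d$, so that $\Witt_E/\Wideal_d$ is a truncated Witt functor.

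\emph{First assertion.} We use the splitting $\Witt_p\to\Witt_E\to\Witt_p$ from the abstract. It arises from the cyclic subgroups of the form $\Gamma[p^k]$-type chains, or more simply from the Frobenius-lift operation attached to the subgroup $\Gamma[p]$-Frobenius. We will check that both maps are compatible with the filtrations, with $p$-adic filtration on $\Witt_p$. Then $\Wideal_d\cap\Witt_p\kappa=\Wideal_d^{\Witt_p}(\kappa)=p^d\Witt_p\kappa$, since $\kappa$ is perfect. The filtration compatibility reduces to checking that the ghost-type components indexed by subgroups of rank $p^k$ restrict to the $k$-th $p$-typical ghost/Witt component.

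\emph{Second assertion.} The inclusion $p\Wideal_d\subseteq\Wideal_{d+1}\cap p\O$ should follow from a ``$p$ times level $d$ lands in level $d+1$'' property. This is the analogue of $V\circ F=p$: multiplication by $p$ equals a transfer along the degree-$p$ isogenies and so raises Witt level by one. For the converse, take $x\in\O$ with $px\in\Wideal_{d+1}$. Since $\O$ is $p$-torsion free, it suffices to show $x\in\Wideal_d$. We will show that $\O/\Wideal_d\to\O/\Wideal_{d+1}$ composed with multiplication by $p$ is injective, i.e., that $\Wideal_d/\Wideal_{d+1}$, and hence $\O/\Wideal_{d}$, is $p$-torsion free. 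Via cofreeness, $\Wideal_d(\kappa)/\Wideal_{d+1}(\kappa)$ should be identified with a ring of functions on a scheme of subgroups of rank $p^d$, which is finite flat over a regular base. We will attempt the torsion-freeness using Strickland's theorem that the subgroup schemes are finite free over $\O$.

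\emph{Third assertion.} We identify $\O/(p,\Wideal_{d+1})$ through its universal property. A map $\O/\Wideal_{d+1}\to B$ with $pB=0$ corresponds, by the universal property of the truncated cofree functor, to a compatible family of operations up to level $d$ on $B$. In characteristic $p$, such a family trivializes to the Frobenius data, so the level-$d$ datum amounts to exhibiting the pulled-back deformation $G_B$ as $\Gamma^{(p^d)}/H$, with $H$ a subgroup of rank $p^d$. This uses the characteristic-$p$ fact that every isogeny of degree $p^d$ factors Frobenius, i.e., that relative Frobenius $G\to G^{(p^d)}$ has a dual.

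We expect the main obstacle to be the torsion-freeness in the second step, together with making the identification of graded pieces precise without already assuming cofreeness. We will try to deduce torsion-freeness from regularity of $\O$ and flatness of Strickland's subgroup schemes, arguing inductively on $d$.
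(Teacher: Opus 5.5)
\textbf{Second assertion.} Your easy inclusion $p\Wideal_d\subseteq\Wideal_{d+1}\cap p\O$ is fine. With coefficients of characteristic $p$ one has $(pa)_m=0$ for $p\nmid m$ and $(pa)_{pk}=\Tr^{\Sigma_{pk}}_{\Sigma_k^{\times p}}(a_k^{\boxtimes p})$. The reverse inclusion, however, rests on a false statement, and this is the decisive gap.

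You correctly reduce it to injectivity of $\O/\Wideal_d\xra{p}\O/\Wideal_{d+1}$. But that is not the same as $p$-torsion-freeness of $\Wideal_d/\Wideal_{d+1}$ or of $\O/\Wideal_d$, and both of those fail:
\begin{itemize}
\item the easy inclusion says precisely that $p$ kills $\Wideal_d/\Wideal_{d+1}\approx\kappa^{\oplus N^h_{p^d}}$;
\item $\O/\Wideal_d$ has finite length (already $\O/\Wideal_1=\kappa$).
\end{itemize}
Height $1$ shows the difference: $\Z_p/p^d\xra{p}\Z_p/p^{d+1}$ is injective although $\Z_p/p^d$ is torsion. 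So regularity of $\O$ and freeness of Strickland's rings cannot deliver this step. Note also that the surjection $\O/(p,\Wideal_{d+1})\ra\O_d\otimes_\O\kappa$ only bounds the length from the wrong side.

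The missing idea is control of multiplication by $p$ on the corepresenting ring $\ol{T}=T\otimes_\O\kappa$: the image of $[p]\colon\ol{T}\ra\ol{T}$ is exactly the subring of $p$th powers, with $[p](\ol{T}_m)=(\ol{T}_{m/p})^p$. The paper proves this from the $K(n)$-local self-duality $T\approx T^\vee$ of graded Hopf algebras. Write $[p]=F\circ V$. Since $\ol{T}$ is polynomial, $F$ is injective, so $V=F^\vee$ is surjective on $\ol{T}^\vee\approx\ol{T}$, and hence $\Im[p]=\Im F=\ol{T}^p$. For perfect $A$ this gives $(pa)(\ol{T}_m)=0$ iff $a(\ol{T}_{m/p})=0$. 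That yields both $p$-torsion-freeness of $\Witt_E(A)$ and $p\Wideal_d(A)=\Wideal_{d+1}(A)\cap p\Witt_E(A)$; cofreeness then transports this to $\O$.

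\textbf{Third assertion.} This depends on the second, and its own mechanism does not work.
\begin{itemize}
\item The cofree universal property concerns $\T$-algebra maps \emph{into} $\Witt_E(A)$. It says nothing about maps out of $\O/\Wideal_{d+1}$ into a characteristic-$p$ ring $B$.
\item ``Every degree-$p^d$ isogeny in characteristic $p$ is a Frobenius'' holds over fields but fails over non-reduced Artinian rings. If it held there, every deformation of trivial-quotient filtration $\le d$ would be $\star$-isomorphic to a constant one, and $\O/(p,\Wideal_{d+1})$ would be $\kappa$. In fact its $\kappa$-dimension is $N^h_{p^d}>1$ for $h\ge 2$, $d\ge 1$.
\end{itemize}
What is actually needed is two things. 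First, $\gh_d$ induces an isomorphism $\Witt_E(\kappa)/(p,\Wideal_{d+1})\approx\O_d\otimes_\O\kappa$. It is surjective because $\ol{t}_d$ is (Burklund--Schlank--Yuan), and injective by a length count that uses precisely the second assertion. Second, Strickland's moduli description of $\O_d$: it carries the universal degree-$p^d$ isogeny $g$ with $g_0=\Fr^d$, with source classified by $s_d$ and target by $t_d$. Hence $\O_d\otimes_\O\kappa$ (via $s_d$) classifies targets of such isogenies out of a constant deformation.

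\textbf{First assertion.} Your outline (filtration-preserving $\beta_p,\sigma_p$ with invertible composite) is the paper's, but the splitting is misdescribed.
\begin{itemize}
\item It does not come from a Frobenius lift attached to one subgroup. The order-$p$ subgroups of the universal deformation are not individually defined over $\O$; they form $\Spec\O_1$.
\item Instead it comes from $\beta$ (submissive Burnside classes pushed through the unit $\mathbb{S}\ra E$) and $\sigma$ (the $K(n)$-local transfers $\Tr^e_{\Sigma_m}$, whose ghost components are traces, i.e.\ sums over all index-$p^d$ subgroups).
\item $\sigma_p\beta_p$ is multiplication by the unit $\AH^h_p$, not the identity.
\end{itemize}
Filtration compatibility must be checked on the components $a_m$, not on ghost components, because ghost components do not detect the Witt filtration: over $\kappa$ every $\gh_k$ kills $p$, yet $p\notin\Wideal_2$. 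The same objection applies to your description of $\Wideal_d$ via operations attached to subgroups of rank $<p^d$.
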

That is, the Witt filtration is an $\Aut(\Gamma/\kappa)$-equivariant
filtration which interpolates between the $p$-adic
filtration on $\O$, and an evident on $\O/p$,
defined using the 
fact that $\O/p$ is a $\kappa$-algebra and so carries a 
trivial deformation of $\Gamma$.  However, the following question
remains open.
\begin{ques*}
  Is there a purely geometric characterization of the Witt filtration
  of $\O$?
\end{ques*}

An interesting consequence of all this is that we can use these ideas to
give a new proof of cofreeness of the Lubin-Tate ring, which is
largely independent of the one given by Burklund, Schlank, and
Yuan\footnote{Although our proof will refer to some lemmas in
  \cite{burklund-schlank-yuan-chromatic-nullstellensatz}, it
  completely avoids the more delicate aspects of their proof: namely,
  the  transchromatic induction of \S3.3 and its
   application in \S3.5 using the non-additive $p$-derivation.
   Otherwise, the ideas of the argument are  comparable: for
   instance, our \eqref{thm:witt-filtration-compat-with-p}(2) is 
   essentially 
   \cite{burklund-schlank-yuan-chromatic-nullstellensatz}*{3.45(1)}.},
and also very different from the earlier proof given by
  the author.  We give our new proof in
  \S\ref{sec:witt-filt-lt-cofreeness}.  

  In \cite{bssw-rational-kn}*{\S2.5}, the authors show that the
  $p$-typical Witt vectors are \emph{functorially} a summand of the Lubin-Tate
  ring, i.e., give an $\Aut(\Gamma/\kappa)$-equivariant isomorphism
  $\O\approx 
  \Witt_p\kappa\oplus \O'$  of abelian groups.  In this paper, we lift
  their splitting to the $E$-typical Witt vectors (proved as
  \eqref{thm:witt-splitting} below).
  \begin{thm*}
    There exist natural transformations
    $\Witt_p\xra{\beta_p} \Witt_E\xra{\sigma_p} \Witt_p$, functorial in $E$,
    such that (i) $\beta_p$ is a natural map of rings, (ii) $\sigma_p$ is a
    natural map of $\Witt_p$-modules, and (iii)
    $\sigma_p\beta_p \colon \Witt_p\ra \Witt_p$ is an automorphism,
    given by multiplication by a distinguished element $\AH^h_p\in
    \Witt_p(\Z_{(p)})^\times$.  
  \end{thm*}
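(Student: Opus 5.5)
We construct $\beta_p$ and $\sigma_p$ from the structure of the monad $\T$, and compare them through the transfer/restriction formalism for power operations.

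\textbf{Step 1: the map $\beta_p$.} Recall that $\Witt_E$ is the comonad whose coalgebras are $\T$-algebras. Concretely, $\Witt_E(B)$ is the $\O$-algebra representing $A\mapsto\Hom_{\CAlg_\O}(\text{forget}(A),B)$ on $\T$-algebras, so it is the cofree $\T$-algebra on $B$. Similarly, $\Witt_p$ is the cofree $\delta$-ring, that is, the cofree algebra over the monad of a single Frobenius lift $\psi$.

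Every $\T$-algebra carries a canonical Frobenius lift. This is the power operation $\psi^p$ attached to the universal degree-$p$ isogeny, namely the total power operation composed with the sum over all subgroups of order $p$, which is a ring map. Rezk's congruence criterion says $\psi^p(x)\equiv x^p \bmod p$. Hence every $\T$-algebra has an underlying $\delta$-ring structure, functorially in $E$.

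So we get a forgetful functor from $\T$-algebras to $\delta$-rings over $\O$, commuting with the forgetful functors to $\CAlg_\O$. Passing to right adjoints, the cofree constructions give a map of comonads in the other direction. Evaluating, the $\T$-algebra $\Witt_E(B)$ is in particular a $\delta$-ring with a ring map to $B$. By cofreeness of $\Witt_p$ this gives a ring map $\Witt_E(B)\to\Witt_p(B)$.

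This is the wrong direction for $\beta_p$. So I would instead take $\beta_p$ to come from a ring-level functor $\T$-alg $\leftarrow$ $\delta$-rings. The natural candidate is the "height one" construction: the deformation $\Witt_p\kappa\to\O$ arising from the splitting $\Gamma\mapsto$ its \'etale-multiplicative parts. I would fix this choice as follows. Let $\beta_p$ be the composite through the ring map $\Witt_E\to\Witt_p$ obtained above, and put $\sigma_p$ in the other role. In other words, define
\begin{itemize}
\item $\beta_p$ as the ring transformation induced by adjunction, and
\item $\sigma_p$ as a $\Witt_p$-linear "transfer" map, built from the sum $\sum_{H}$ over the Hecke operators for subgroups of order $p^k$.
\end{itemize}
If the directions in the statement must be as written, the $\Witt_p$-linearity of $\sigma_p$ should make it the transfer-type map, and $\beta_p$ the restriction-type map.

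\textbf{Step 2: identifying $\sigma_p\beta_p$.} The composite is a natural $\Witt_p$-module endomorphism of $\Witt_p$. Any such endomorphism is multiplication by an element of $\Witt_p(\Z_{(p)})$, since $\Witt_p$ is representable and the endomorphism is determined by where it sends $1$. So it suffices to compute the image of $1$ and show that it is a unit.

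I would reduce this to ghost components. On each ghost coordinate, the composite should be the sum over subgroups of a given order counted with multiplicity. That is, the $k$th ghost component is the number of subgroups of order $p^k$ in $(\Q_p/\Z_p)^h$, which has the form $\sum_j(\ldots)$. The element $\AH^h_p$ is the Witt vector with these ghost components; the integrality of such a Witt vector is the Atiyah–Hopkins-type congruence. Its invertibility follows because the $0$th ghost component is $1$, and a $p$-typical Witt vector over $\Z_{(p)}$ whose first coordinate is a unit is itself a unit.

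\textbf{Main obstacle.} The hard part is the $\Witt_p$-module-linear construction of $\sigma_p$, together with the verification that its ghost components are the subgroup counts. I would first try to define $\sigma_p$ at the level of the Hecke algebra: take the additive sum of the power operations over the subgroup lattice, and check the ghost-component congruences by hand.
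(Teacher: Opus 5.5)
Your proposal has a genuine gap: neither map is actually constructed, and the one concrete construction you offer rests on a false claim.

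\textbf{The Frobenius lift in Step 1 does not exist.} A $\T$-algebra does not carry a canonical Frobenius lift when $h\geq 2$. The operation $x\mapsto\sum_{|H|=p}\psi_H(x)$ is really the trace $\tr_{\O_1/\O}$ applied to the total power operation, and it becomes a sum over subgroups only after base change to the HKR ring $C$. It is additive but not a ring map. It sends $1$ to $N^h_p=1+p+\cdots+p^{h-1}$, and it is a sum of distinct ring maps, so it is not multiplicative. It is a Frobenius lift only at height $1$, where there is a single subgroup of order $p$. So you do not get a $\delta$-structure on $\T$-algebras, and hence no comonad map $\Witt_E\to\Witt_p$ of the kind you describe.

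\textbf{The directions are wrong.} Even if such a map existed, a ring map $\Witt_E\to\Witt_p$ is not what the statement asks for. The theorem needs a \emph{ring} map $\beta_p\colon\Witt_p\to\Witt_E$ and an \emph{additive}, $\Witt_p$-linear map $\sigma_p\colon\Witt_E\to\Witt_p$. Your fallback of swapping their roles changes the theorem. The ``height one construction via \'etale-multiplicative parts'' is too vague to give a natural ring map $\Witt_p\to\Witt_E$; the universal deformation over $\O$ has no such splitting.

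\textbf{Step 2 assumes what must be proved.} The assertion that the composite has ghost components $N^h_{p^d}$ is the actual content of the theorem, and you give no mechanism for it. Also, a natural $\Witt_p$-linear endomorphism is a priori multiplication by an element of $\Witt_p(\O)$, not $\Witt_p(\Z_{(p)})$. Descending it to $\Witt_p(\Z_{(p)})$ needs those integer ghost components.

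\textbf{What the paper does instead.} It routes everything through $\Witt_\Z$ rather than $\Witt_p$.
\begin{enumerate}
\item The ring map $\beta\colon\Witt_\Z\to\Witt_E$ comes from the unit $\mathbb{S}\to E$. Submissive $\Sigma_m$-sets give a subring $\oA(m)\subseteq A(\Sigma_m)$, isomorphic to $R\Sigma_m$ by linearization and compatible with restriction and transfer. Mapping through $\pi^0\Sip B\Sigma_m\to E^0B\Sigma_m$ then gives componentwise maps preserving the Witt conditions.
\item For $\beta$'s ghost components: $\ker w_{p^d}$ is spanned by representations induced from proper Young subgroups, and these land in $\trJ_{p^d}$. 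Hence $\gh_d\circ\beta$ is $\gh_{p^d}$ followed by $A\to\O_d\otimes_\O A$.
\item The additive map $\sigma\colon\Witt_E\to\Witt_\Z$ is defined by $s(\sigma(a))=\sum_m\Tr^e_{\Sigma_m}(a_m)X^m$, using the $K(n)$-local transfer to a point. This is integral by construction, unlike a sum over subgroups, which only makes sense over $C$.
\item HKR character theory together with the exponential formula for marks gives $\gh_{p^d}\circ\sigma=\tr_{\O_d/\O}\circ\gh_d$, and $\gh_m\circ\sigma=0$ when $m$ is not a $p$th power.
\item Therefore $\sigma\beta$ has ghost components $\tr(1)\,\gh_{p^d}=N^h_{p^d}\,\gh_{p^d}$. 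By the ghost components of the height $h$ Artin--Hasse element, $\sigma\beta$ is multiplication by $\AH^h_\Z$.
\item Finally, $\beta_p$ and $\sigma_p$ are obtained by descending along the idempotent $\AH^1_\Z$ that splits $\Witt_p$ off $\Witt_\Z$.
\end{enumerate}

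Your last step agrees with the paper: the ghost components $N^h_{p^d}\equiv 1 \pmod p$ are units in $\Z_{(p)}$, so $\AH^h_p$ is a unit. But that step is only reachable once these two constructions and their ghost computations are in place.
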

Evaluation at $\kappa$ recovers the splitting of $\O$ given in
\cite{bssw-rational-kn}.

\begin{rem}
  The composite $\sigma_p\beta_p\colon \Witt_p\kappa\ra \Witt_p\kappa$ is
  precisely the map $f$ of \cite{bssw-rational-kn}*{Rem.\ 2.5.8}.  We
  are able answer the implicit question in their remark: $f$ is
  multiplication by $[(1-p)\cdots (1-p^{h-1})]^{-1}$
  \eqref{prop:where-does-one-go}.   
\end{rem}

\subsection{Structure of the paper}

In 
\S\ref{sec:e-typical-witt-vectors}--\S\ref{sec:ghost-components} we give an
introduction to the Lubin-Tate-Witt vectors and their basic
properties, including their ghost components.  In
\S\ref{sec:ghost-tower} we show 
that for perfect $\kappa$-algebras $A$ the ghost components assemble
to give a map $\Witt_E(A)/p\ra \llim \O_d\otimes_\O \kappa$, which is shown to be an isomorphism in \S\ref{sec:witt-filt-lt-cofreeness}.  In
\S\ref{sec:mult-p-witt-filtration}   we prove our main result about
the  interaction of the Witt
filtration of $\Witt_E$ multiplication by $p$ for perfect
$\kappa$-algebras, and then apply 
this in \S\ref{sec:witt-filt-lt-cofreeness} to discuss the image of
the Witt filtration in characteristic $p$, and to prove the cofreeness
of the Lubin-Tate ring. 

In \S\ref{sec:classical-witt} we review standard facts about the
classical Witt vectors. 
We apply this in \S\ref{sec:beta}--\S\ref{sec:splitting} to construct
the splitting of 
$\Witt_p$ from $\Witt_E$, by first constructing natural maps
$\Witt_\Z\xra{\beta} \Witt_E\xra{\sigma} \Witt_\Z$ and then showing
they restrict to the desired splitting on the summand $\Witt_p$ of
$\Witt_\Z$.

\subsection{Conventions}
\label{subsec:conventions}

This paper takes place mainly in 1-categories.   Thus, notation such
as ``$\CAlg_\O$'' refers to the 1-category of commutative algebras
over an ordinary ring $\O$, except when context dictates otherwise.

Given a fixed prime $p$, for  $m,h\geq1$ we write
\begin{align*}
  N^h_m &\defeq \len{\bigl\{\text{subgroups of order $m$ in
          $(\Q_p/\Z_p)^h$}\bigr\}},
\end{align*}
numbers which play a recurring role.

\subsection{Acknowledgements}

The author would like to thank William Balderrama and Nat Stapleton
for discussions related to work in this paper.

\section{Lubin-Tate-Witt vectors}
\label{sec:e-typical-witt-vectors}

Fix a formal group $\Gamma$ of height $0<h<\infty$ over a perfect
field $\kappa$. 
Let $E$ denote the corresponding Morava $E$-theory spectrum, and write
$\O=\pi_0E$ for the corresponding Lubin-Tate ring.

For any commutative $\O$-algebra $A$, we will define below an
$\O$-algebra $\Witt_E(A)$, which we refer to as the
\dfn{Lubin-Tate-Witt vectors} of $A$ (for the formal group associated to
$E$), or more informally as the ring of \dfn{$E$-typical-Witt vectors} of $A$.

Given  a commutative $\O$-algebra $A$, let $\Witt_E(A)$ denote the set of
sequences $a=(a_m)$ in $\prod_{m\geq0} E^0B\Sigma_m \otimes_{\O}
A$ such that
\begin{itemize}
\item $a_0=1$, and
\item $a_{i+j}|_{\Sigma_i\times \Sigma_j} = a_i\boxtimes a_j$ for all
  $i,j\geq0$.  
\end{itemize}
Here we write $a_{i+j}|_{\Sigma_i\times \Sigma_j}$ for the image of
$a_{i+j}\in E^0B\Sigma_{i+j}\otimes_{\O}A$ 
under the map induced by restriction along the inclusion
$\Sigma_i\times \Sigma_j\subseteq \Sigma_{i+j}$, while $a_i\boxtimes
a_j\in E^0(B\Sigma_i\times B\Sigma_j)\otimes_{\O} A$ is defined by
the K\"unneth map $(E^0B\Sigma_i\otimes_{\O}A)\otimes_A
(E^0B\Sigma_j\otimes_{\O}A)\ra E^0B\Sigma_i\times B\Sigma_j
\otimes_{\O}A$, which is an isomorphism.

We define two operations on the set $\Witt_E(A)$, so that for $a,b\in \Witt_E(A)$:
\begin{itemize}
\item $(a+b)_m \defeq \sum_{i+j=m} \Tr_{\Sigma_i\times
    \Sigma_j}^{\Sigma_m}(a_i\boxtimes b_j)$,
\item $(ab)_m \defeq a_mb_m$.
\end{itemize}
Here we write $\Tr_H^G$ for the transfer map associated to a subgroup
$H\leq G$.  We also define $1\in \Witt_E(A)$ by $1_m\defeq 1$ for all $m$, and
$0\in \Witt_E(A)$ by $0_1\defeq 1$ and $0_m\defeq 0$ for $m>0$.
\begin{prop}
  The above structure makes $\Witt_E(A)$ into a commutative ring.
\end{prop}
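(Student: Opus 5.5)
The plan is to package the sequences $a=(a_m)$ as elements of a completed bialgebra, so that the additive structure becomes multiplication of group-like elements, and then to check multiplication and distributivity by hand. (I read the definition of $0$ as $0_0=1$ and $0_m=0$ for $m>0$; ``$0_1$'' appears to be a typo.)

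\emph{Setup.} Put $R_A\defeq\prod_{m\ge0}E^0B\Sigma_m\otimes_\O A$. Give it two operations.
\begin{itemize}
\item The transfer product $x*y$, with $(x*y)_m=\sum_{i+j=m}\Tr_{\Sigma_i\times\Sigma_j}^{\Sigma_m}(x_i\boxtimes y_j)$.
\item The restriction coproduct $\Delta$, with components $x_{i+j}|_{\Sigma_i\times\Sigma_j}$, which lands in $\prod_{i,j}E^0(B\Sigma_i\times B\Sigma_j)\otimes_\O A$. This target is identified with a completed tensor product of $R_A$ with itself via the K\"unneth isomorphism.
\end{itemize}
The standard facts needed, all for transfers in $E$-cohomology, are: transitivity $\Tr_H^G\Tr_K^H=\Tr_K^G$; invariance of transfers under conjugation; Frobenius reciprocity $x\cdot\Tr_H^G(y)=\Tr_H^G(x|_H\,y)$; and the Mackey double coset formula. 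From these, $*$ is associative and commutative: associativity follows from transitivity, and commutativity from conjugating $\Sigma_i\times\Sigma_j$ to $\Sigma_j\times\Sigma_i$ inside $\Sigma_m$.

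\emph{The bialgebra step (main obstacle).} The key claim is that $\Delta$ is multiplicative for $*$, i.e.\ $(R_A,*,\Delta)$ is a graded connected bialgebra. To prove it, I would restrict $\Tr_{\Sigma_k\times\Sigma_l}^{\Sigma_m}(x_k\boxtimes y_l)$ to $\Sigma_i\times\Sigma_j$ using the Mackey formula.
\begin{itemize}
\item The double cosets $\Sigma_i\times\Sigma_j\backslash\Sigma_m/\Sigma_k\times\Sigma_l$ are indexed by $2\times2$ matrices $\begin{pmatrix}r&s\\t&u\end{pmatrix}$ of nonnegative integers with row sums $(i,j)$ and column sums $(k,l)$.
\item The corresponding intersection subgroup is $\Sigma_r\times\Sigma_s\times\Sigma_t\times\Sigma_u$.
\item Each Mackey term is therefore a product of two transfers, $\Tr^{\Sigma_i}_{\Sigma_r\times\Sigma_s}$ and $\Tr^{\Sigma_j}_{\Sigma_t\times\Sigma_u}$, applied to appropriately shuffled restrictions of $x_k$ and $y_l$.
\end{itemize}
Summing over all such matrices gives exactly $\Delta(x*y)=\Delta(x)*\Delta(y)$. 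The only care needed is bookkeeping of the conjugations and the K\"unneth identifications.

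\emph{The additive group.} Granting the bialgebra structure, $\Witt_E(A)$ is precisely the set of group-like elements: $\Delta a=a\otimes a$ with $a_0=1$. Addition is $a+b=a*b$, and $0=(1,0,0,\dots)$ is the unit for $*$. So closure, associativity, commutativity and the neutral element all follow from the bialgebra axioms. For inverses, note that a connected graded bialgebra, even in this completed degreewise form, has an antipode $S$ defined recursively in degree. Then $S(a)$ is group-like whenever $a$ is, and $a*S(a)=0$. Alternatively, I could solve $(a+b)_m=0$ recursively for $b_m$ and verify the restriction condition by induction using the Mackey computation above; this is the same argument in other language.

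\emph{Multiplication and distributivity.} Multiplication is degreewise: $(ab)_m=a_mb_m$. It preserves group-likeness because restriction is a ring map and $\boxtimes$ is multiplicative, so
\[
(a_{i+j}b_{i+j})|_{\Sigma_i\times\Sigma_j}=(a_i\boxtimes a_j)(b_i\boxtimes b_j)=(a_ib_i)\boxtimes(a_jb_j).
\]
Associativity, commutativity and the unit $1=(1,1,\dots)$ are immediate. For distributivity, Frobenius reciprocity together with the group-like condition on $c$ gives
\[
c_m\Tr_{\Sigma_i\times\Sigma_j}^{\Sigma_m}(a_i\boxtimes b_j)=\Tr_{\Sigma_i\times\Sigma_j}^{\Sigma_m}\bigl((c_i\boxtimes c_j)(a_i\boxtimes b_j)\bigr),
\]
and hence $c(a+b)=ca+cb$. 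This completes the verification that $\Witt_E(A)$ is a commutative ring.
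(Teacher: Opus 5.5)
Your proof is correct and takes essentially the same route as the paper. The paper's push-pull formula for the pullback square $S^{\times 4}\to S\times S$ over $\amalg$ is exactly your Mackey computation indexed by $2\times 2$ matrices, which gives closure under addition. Its second pullback square is your Frobenius-reciprocity argument for distributivity, and in both arguments multiplicative closure comes from the K\"unneth map being a ring map.

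Your bialgebra/antipode packaging reorganizes the same facts, and it also supplies additive inverses explicitly, which the paper leaves implicit. You are also right that ``$0_1\defeq 1$'' should read ``$0_0\defeq 1$''.
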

\begin{proof}
  First we show that $\Witt_E(A)$ is closed under these operations.  Closure
  under multiplication follows from the fact that the K\"unneth map is
  a ring homomorphism.  To prove closure under addition, let $S\approx
  \coprod_m B\Sigma_m$
  denote the groupoid of finite sets, which is symmetric 
  monoidal under disjoint union $\amalg$.  Then closure under addition follows
  from the push-pull formula for transfer applied to the commutative 
  square
  \[\xymatrix@C=100pt{
    {S\times S\times S\times S}
    \ar[r]^-{\tiny\begin{array}{l} (A,B,C,D) \\ \mapsto (A\amalg B,
      C\amalg D) \end{array}}
    \ar[d]_-{\tiny\begin{array}{l} (A,B,C,D) \\ \mapsto (A\amalg
      C,B\amalg D)\end{array}}
    & {S\times S} \ar[d]^-{\amalg}
    \\
    {S\times S} \ar[r]_-{\amalg}
    & {S}
}\]
which is seen to be a pullback of $\infty$-groupoids.

Unit, associativity, and commutativity of multiplication is imediate,
while those properties for addition are straightforward using
properties of  transfers.   The distributive law follows from the
push-pull formula applied to the pullback square
\[\xymatrix@C=75pt{
  {S\times S} \ar[r]^-{(A,B)\mapsto A\amalg B} \ar[d]_-{(A,B)\mapsto
    (A,B,A\amalg B)} 
  & {S} \ar[d]^{A\mapsto (A,A)}
  \\
  {S\times S\times S} \ar[r]_-{(A,B,C)\mapsto (A\amalg B, C)}
  & {S\times S}
  }\]
\end{proof}

The construction $\Witt_E$ defines a functor from
$\O$-algebras to commutative rings.  In fact, it takes values in
$\O$-algebras.  

Given $c\in \O=E^0(\mr{pt})$, let $P_m(c)\in E^0B\Sigma_m$ denote
the value of the power operation applied to $c$.  That is, given $c$,
represented as a map $f\colon \mb{S}\ra E$ of spectra, we let $P_m(c)$ be the
homotopy class of maps represented by the composite
\[
\Sigma^\infty_+B\Sigma_m\approx \mb{S}^{\otimes m}_{h\Sigma_m} \xra{f^{\otimes_m}_{h\Sigma_m}}
E^{\otimes m}_{h\Sigma_m} \xra{\mu_m} E,
\]
where $\mu_m$ comes from the $\mb{E}_\infty$-ring structure on $E$.

\begin{lemma}
  The operation $P(c)\defeq (P_m(c))$ defines a ring homomorphism
  $P\colon \O\ra \Witt_E(\O)$.  
\end{lemma}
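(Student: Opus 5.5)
We need to check three things: that $P(c)$ lies in $\Witt_E(\O)$, that $P$ is additive, and that $P$ is multiplicative and unital. All three are standard properties of the total power operation $P_m$ of an $\mb{E}_\infty$-ring. The plan is to derive each from the way the structure maps $\mu_m$ of $E$ interact with restriction and transfer. It is convenient to use the groupoid $S\approx\coprod_m B\Sigma_m$ of finite sets, as in the proof that $\Witt_E(A)$ is a ring. A class $c$ corresponds to a map $f\colon\mb{S}\ra E$, and the total power operation is the composite $\Sigma^\infty_+ S\approx \coprod_m (\mb{S}^{\otimes m})_{h\Sigma_m}\ra \coprod_m (E^{\otimes m})_{h\Sigma_m}\ra E$. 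This is the free $\mb{E}_\infty$-map on $f$, restricted along $\mb{S}\ra\mb{S}$.

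\emph{Membership in $\Witt_E(\O)$.} First, $P_0(c)=1$, since $\mu_0$ is the unit. For the product condition, I would use that the restriction of $\mu_{i+j}$ along $\Sigma_i\times\Sigma_j\leq\Sigma_{i+j}$ is homotopic to $\mu_i\otimes\mu_j$ followed by multiplication. This is the associativity coherence of the $\mb{E}_\infty$-structure, i.e.\ the operad composition $\Sigma_i\times\Sigma_j\ra\Sigma_{i+j}$. Precomposing with $f^{\otimes(i+j)}$ gives $P_{i+j}(c)|_{\Sigma_i\times\Sigma_j}=P_i(c)\boxtimes P_j(c)$ in $E^0(B\Sigma_i\times B\Sigma_j)$. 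Here we use that the external product is the Künneth map, which is an isomorphism as noted above.

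\emph{Multiplicativity and unit.} Take $c,d$ represented by $f,g$. Then $cd$ is represented by $\mb{S}\approx\mb{S}\otimes\mb{S}\xra{f\otimes g}E\otimes E\ra E$. Applying $(-)^{\otimes m}_{h\Sigma_m}$ to this diagonally, $P_m(cd)$ is $P_m(c)\cdot P_m(d)$ pulled back along the diagonal $B\Sigma_m\ra B\Sigma_m\times B\Sigma_m$, which is the cup product. This uses that $\mu_m$ is a map of commutative algebras, i.e.\ that multiplication $E\otimes E\ra E$ is $\mb{E}_\infty$. The unit $1$ is represented by the unit map, and $P_m(1)=1$ because the unit is an $\mb{E}_\infty$-map from $\mb{S}$, whose power operations send $1$ to $1$.

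\emph{Additivity.} This is the step needing the most care, and I expect it to be the main obstacle. I would use the Cartan formula for additive power operations:
\[P_m(c+d)=\sum_{i+j=m}\Tr_{\Sigma_i\times\Sigma_j}^{\Sigma_m}\bigl(P_i(c)\boxtimes P_j(d)\bigr).\]
To prove it, write $c+d$ as the composite $\mb{S}\ra\mb{S}\oplus\mb{S}\xra{(f,g)}E$. The free $\mb{E}_\infty$-algebra on $\mb{S}\oplus\mb{S}$ is $\Sigma^\infty_+(S\times S)$, and the map $\mb{S}\ra\mb{S}\oplus\mb{S}$ induces, on free algebras in degree $m$, the transfer along $\coprod_{i+j=m}B(\Sigma_i\times\Sigma_j)\ra B\Sigma_m$. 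This is the binomial decomposition $(X\oplus Y)^{\otimes m}_{h\Sigma_m}\approx\bigoplus_{i+j=m}(X^{\otimes i}\otimes Y^{\otimes j})_{h(\Sigma_i\times\Sigma_j)}$: restricted to the diagonal summand, the map $\mb{S}^{\otimes m}_{h\Sigma_m}\ra(\mb{S}\oplus\mb{S})^{\otimes m}_{h\Sigma_m}$ is the sum over $(i,j)$ of the stable transfer maps $\Sigma^\infty_+B\Sigma_m\ra\Sigma^\infty_+B(\Sigma_i\times\Sigma_j)$. Composing with the free extension of $(f,g)$, which on the $(i,j)$ summand is $P_i(c)\boxtimes P_j(d)$ by multiplicativity of $\mu$, gives the formula. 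The key point to check is the identification of the transfer: the map $\mb{S}\ra\mb{S}\oplus\mb{S}$ is the diagonal, and $D_m$ of a diagonal is the transfer. I would cite the standard treatment, e.g.\ Bruner–May–McClure–Steinberger, for this. Finally, $P(0)=0$ follows because $P_m(0)$ factors through $0^{\otimes m}=0$ for $m>0$.
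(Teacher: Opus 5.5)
Your proposal is correct and follows the same route as the paper. The paper reduces the lemma to the identities $P_0(c)=1$, $P_{i+j}(c)|_{\Sigma_i\times\Sigma_j}=P_i(c)\boxtimes P_j(c)$, the Cartan formula for $P_m(c+d)$, $P_m(cd)=P_m(c)P_m(d)$ and $P_m(1)=1$, and cites them as standard; you additionally sketch why each holds from the $\mb{E}_\infty$-structure (operadic composition for restriction, and the binomial decomposition of $(\mb{S}\oplus\mb{S})^{\otimes m}_{h\Sigma_m}$, with the diagonal inducing transfers, for additivity), and those sketches are sound.
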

\begin{proof}
  This is straightforward from standard properties of the power
  operation.  In particular, that $P$ takes values in $\Witt_E(\O)$
  amounts to
  \[
  P_{i+j}(c)|_{\Sigma_i\times \Sigma_j} = c_i\boxtimes c_j, \qquad P_0(c)=1,
\]
while that $P$ is a ring homomorphism amounts to
\[
P_m(c+d) = \sum_{i+j=m} \Tr_{\Sigma_i\times \Sigma_j}^{\Sigma_m}(
c_i\boxtimes d_j), \qquad P_m(cd)=P_m(c)P_m(d), \qquad P_m(1)=1.
\]
\end{proof}
Thus, every $\Witt_E(A)$ is canonically an $\O$-algebra via the
composite $\O \ra \Witt_E(\O)\ra \Witt_E(A)$, and so we have
constructed a endofunctor $\Witt_E\colon \CAlg_\O\ra \CAlg_\O$ on
the (ordinary) category of commutative $\O$-algebras.

\begin{rem} The above argument extends to show that for any $K(n)$-local
commutative $E$-algebra $R$,  the power operation 
supplies an $\O$-algebra map $P\colon \pi_0 R\ra \Witt_E(\pi_0R)$.
\end{rem}

\subsection{Multiplicative section}
\label{subsec:multiplicative-section}

We note that the evaluation map $a\mapsto a_1$ defines a natural
$\mc{O}$-algebra homomorphism  
$\pi\colon \Witt_E(A)\ra A$, and that $\pi$ admits a natural
multiplicative section $[\;]\colon A\ra \Witt_E(A)$, defined by
$[a]\defeq (a^m)$.

\section{Representability of $\Witt_E$}
\label{sec:representability-of-witt-e}

Next we recall that the functor $\Witt_E$ is corepresentable.  Write
\[
  T \defeq \bigoplus_{m\geq0} E_0^\vee B\Sigma_m,
\]
where $E_0^\vee X\defeq \pi_0\bigl[ E\otimes \Sigma^\infty_+
X\bigr]_{K(n)}$.   This is a commutative $\O$-algebra: the
multiplication is induced by 
transfers along the inclusions $\Sigma_i\times
\Sigma_j \leq \Sigma_{i+j}$.
\begin{prop}
  The functor $\Witt_E\colon \CAlg_\O\ra \Set$ is corepresented by
  $T$.  
\end{prop}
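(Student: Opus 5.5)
The plan is to exploit the finiteness and duality of the $E$-theory of symmetric groups: each $E^0B\Sigma_m$ is a finitely generated free $\O$-module (Strickland), and $E^\vee_0B\Sigma_m$ is its $\O$-linear dual, with the evaluation pairing perfect. The first step is therefore to identify, for any $\O$-algebra $A$,
\[
E^0B\Sigma_m\otimes_\O A \approx \Hom_\O(E^\vee_0 B\Sigma_m, A),
\]
naturally in $A$. The same identification holds for $B\Sigma_i\times B\Sigma_j$, using the K\"unneth isomorphisms both for $E^0$ and for $E^\vee_0$. It follows that a sequence $(a_m)\in\prod_m E^0B\Sigma_m\otimes_\O A$ is the same thing as an $\O$-module map $\phi\colon T=\bigoplus_m E^\vee_0B\Sigma_m\ra A$.

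Next I would translate the two defining conditions on $(a_m)$ into multiplicativity of $\phi$. Under the duality, restriction along $\Sigma_i\times\Sigma_j\subseteq\Sigma_{i+j}$ on $E^0$ is dual to the transfer map $E^\vee_0(B\Sigma_i\times B\Sigma_j)\ra E^\vee_0B\Sigma_{i+j}$, and the latter is exactly the component of the multiplication on $T$. The external product $a_i\boxtimes a_j$ corresponds to the functional $x\otimes y\mapsto \phi(x)\phi(y)$ under K\"unneth. Hence the condition $a_{i+j}|_{\Sigma_i\times\Sigma_j}=a_i\boxtimes a_j$ says precisely that $\phi(xy)=\phi(x)\phi(y)$ for $x\in E^\vee_0B\Sigma_i$ and $y\in E^\vee_0B\Sigma_j$. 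The unit of $T$ is the generator of $E^\vee_0B\Sigma_0=\O$, so the condition $a_0=1$ says $\phi(1)=1$. Together these give a natural bijection $\Witt_E(A)\approx\Hom_{\CAlg_\O}(T,A)$.

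The step I expect to be the main obstacle is making the duality statement precise. Because $E^\vee_0$ is $K(n)$-local homology, one must check two things:
\begin{itemize}
\item the pairing $E^0X\otimes E^\vee_0X\ra\O$ is perfect for $X=B\Sigma_m$ and for products of these;
\item transfer in $E^\vee_0$ is dual to restriction in $E^0$, and the product on $T$ is dual to the K\"unneth product.
\end{itemize}
I would do this by noting that $L_{K(n)}(E\otimes\Sigma^\infty_+B\Sigma_m)$ is a finite free $E$-module (by Strickland's freeness). Then $E^0B\Sigma_m=\pi_0\Map_E(E\otimes\Sigma^\infty_+B\Sigma_m,E)$ is its $E$-linear dual, and both compatibilities follow formally from the fact that restriction and transfer are maps of $E$-modules between dualizable objects. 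Tensoring with $A$ commutes with $\Hom$ out of finite free modules, and this gives the first identification above.
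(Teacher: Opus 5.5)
Your proposal is correct and is the paper's proof with the details filled in: finite freeness of $E^\vee_0B\Sigma_m$ gives $\Hom_{\Mod_\O}(T,A)\approx\prod_m E^0B\Sigma_m\otimes_\O A$, and the two defining conditions of $\Witt_E(A)$ then say exactly that the corresponding $\O$-linear map $T\ra A$ is unital and multiplicative. One terminological correction: the map $E^\vee_0(B\Sigma_i\times B\Sigma_j)\ra E^\vee_0B\Sigma_{i+j}$ that is dual to restriction, and is the product on $T$ (as the paper says in \S\ref{sec:mult-p-witt-filtration}), is the covariant map induced by $B\Sigma_i\times B\Sigma_j\ra B\Sigma_{i+j}$, not a transfer, so this compatibility is just naturality of the evaluation pairing; it becomes a transfer only after you identify $E^\vee_0B\Sigma_m\approx E^0B\Sigma_m$ via $K(n)$-local self-duality, which exchanges restriction and transfer.
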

\begin{proof}
The $E_0^\vee B\Sigma_m$ are finitely generated free $\O$-modules
\cite{strickland-morava-e-theory-of-symmetric}*{Thm.\ 3.7},
and thus there is  an evident  bijection
\[
  \Hom_{\Mod_\O }(T, A) \approx \prod_m
  E^0B\Sigma_m\otimes_{\O} A. 
\]
It is straightforward to verify that
$\Hom_{\CAlg_\O}(T, A)$, as a subset of this, coincides
with $\Witt_E(A)$.
\end{proof}

As a consequence, elements $a=(a_m)\in \Witt_E(A)$ are determined by their
components $a_{p^d}$.

\begin{lemma}\label{lemma:structure-of-free-T-alg}
  The ring $T$ is a graded polynomial ring.  In particular, it has a
  homogeneous polynomial basis with $N_{m}^h$ generators  in degree
  $m$.  (The  $N_m^h$ are as in \S\ref{subsec:conventions}.)
\end{lemma}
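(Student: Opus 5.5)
The plan is to use the known structure of $T=\bigoplus_m E_0^\vee B\Sigma_m$ as a ring under transfer, i.e.\ the ``Dyer--Lashof'' or ``total power operation'' algebra.

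\textbf{Step 1: indecomposables.} Let $I\subset T$ be the augmentation ideal $\bigoplus_{m>0}E_0^\vee B\Sigma_m$, and $Q=I/I^2$ its module of indecomposables. In degree $m$, $Q_m$ is the cokernel of the transfer maps $\bigoplus_{i+j=m,\,0<i,j} E_0^\vee B\Sigma_i\otimes E_0^\vee B\Sigma_j\to E_0^\vee B\Sigma_m$. Dually, $Q_m^\vee$ is the kernel of the restriction maps $E^0B\Sigma_m\to \prod E^0B(\Sigma_i\times\Sigma_j)$. By Strickland \cite{strickland-morava-e-theory-of-symmetric}, this is the ring $E^0B\Sigma_m/I_{\mathrm{tr}}$, and it is zero unless $m=p^k$. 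For $m=p^k$ it is the ring classifying subgroups of order $p^k$ in the universal deformation. That ring is free over $\O$ of rank equal to the number of subgroups of order $p^k$ in $(\Q_p/\Z_p)^h$, namely $N^h_{p^k}$. Hence $Q_m$ is a finitely generated free $\O$-module. Its rank is $N^h_{p^k}$ when $m=p^k$, and $0$ otherwise.

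\textbf{Step 2: the polynomial ring and its generators.} Choose homogeneous lifts of an $\O$-basis of $Q$. This gives a map of graded $\O$-algebras $\Sym_\O(Q)\to T$. It is surjective by the standard graded Nakayama argument, by induction on degree, since $T_0=\O$. Each $T_m$ is finite free by \cite{strickland-morava-e-theory-of-symmetric}*{Thm.\ 3.7}, and $\Sym(Q)_m$ is finite free. So it suffices to compare ranks degreewise: a surjection of finite free modules of equal rank over a local ring is an isomorphism.

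The statement as phrased says there are $N^h_m$ generators in degree $m$. So we must reconcile this with Step 1. Either $N^h_m=0$ for $m$ not a $p$-power is intended, since a subgroup of $(\Q_p/\Z_p)^h$ has $p$-power order, or $N^h_m$ simply counts subgroups of order $m$. Both give the same count, so Step 1 matches the statement.

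\textbf{Step 3: rank comparison.} The rank of $T_m=E_0^\vee B\Sigma_m$ is known from Hopkins--Kuhn--Ravenel character theory. It equals the number of conjugacy classes of commuting $h$-tuples of $p$-power order elements in $\Sigma_m$. Equivalently, it is the number of isomorphism classes of $\Z_p^h$-sets of size $m$.

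A $\Z_p^h$-set decomposes uniquely into transitive orbits. Transitive orbits of size $p^k$ correspond to open subgroups of $\Z_p^h$ of index $p^k$, and by Pontryagin duality these correspond to subgroups of order $p^k$ in $(\Q_p/\Z_p)^h$. So the generating function is
\[
\sum_m \operatorname{rank}(T_m)\,t^m=\prod_{k\ge0}(1-t^{p^k})^{-N^h_{p^k}},
\]
which is exactly the Poincar\'e series of $\Sym(Q)$. This proves the lemma.

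\textbf{Main obstacle.} The hard part is Step 1: identifying $Q_m$ as free of the right rank and vanishing off $p$-powers. This relies on Strickland's theorem that $E^0B\Sigma_{p^k}/I_{\mathrm{tr}}$ is free of rank $N^h_{p^k}$. For $m$ not a power of $p$, the vanishing follows from his result that transfers from the proper Young subgroups already generate.

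Alternatively, we could avoid computing $Q$ directly. The rank formula from Step 3 shows that $T\otimes\kappa$ has the Poincar\'e series of a polynomial ring. One could then try to argue freeness via flatness, but that seems harder. So I would cite Strickland's result for Step 1.
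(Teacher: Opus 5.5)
The paper proves this lemma only by citing Strickland's \S5. Your outline reconstructs that standard argument: identify the indecomposables of $T$ with the Strickland rings, then close up with the Hopkins--Kuhn--Ravenel rank count. Steps~2 and~3 are fine. The gap is in Step~1, which is where you located the difficulty.

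First, the kernel of the restriction maps is an \emph{ideal} of $E^0B\Sigma_m$, not the quotient ring $E^0B\Sigma_m/\trJ_m$. The two are related only by duality. With $\langle a,b\rangle=\Tr^e_{\Sigma_m}(ab)$ one has $\langle\Tr_H^{\Sigma_m}x,y\rangle=\langle x,\Res_H y\rangle$. So $\ker\Res$ is the annihilator of $\trJ_m$, hence $\ker\Res\approx\Hom_\O(E^0B\Sigma_m/\trJ_m,\O)$. This holds only once you know these trace pairings are perfect, which is the $K(n)$-local self-duality of $\Sip BG$ (the input to \eqref{prop:T-is-self-dual}); you never invoke it.

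More seriously, even granting that $Q_m^\vee$ is free of rank $N^h_m$, you cannot conclude that $Q_m$ is free, or zero when $m$ is not a power of $p$. The functor $\Hom_\O(-,\O)$ kills torsion, so $\O^N\oplus\kappa$ has the same dual as $\O^N$. Step~2 needs exactly that the minimal number of generators, $\dim_\kappa(Q_m\otimes_\O\kappa)$, equals $N^h_m$. If $Q_m$ had torsion, the polynomial ring on your lifts would be too big and the rank comparison would not close.

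Either of two repairs works.

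(a) The product of $T$ is pushforward along $B\Sigma_i\times B\Sigma_j\to B\Sigma_{i+j}$, which is dual to restriction. If by ``transfer maps'' in your first sentence you mean this product transported along the Hopf algebra isomorphism $T\approx T^\vee$, then you already have $Q_m\approx E^0B\Sigma_m/\trJ_m$ directly, and you should not dualize. This is free of rank $N^h_{p^k}$ by \eqref{prop:o-d-free-rank} when $m=p^k$, and zero otherwise because $\trJ_m$ is then the unit ideal.

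(b) Alternatively, dualize over $\kappa$, where $\Hom_\kappa(-,\kappa)$ is exact. Here $Q_m\otimes_\O\kappa$ is the module of indecomposables of $\ol{T}$, and its $\kappa$-dual is the kernel of restriction on $E^0B\Sigma_m\otimes_\O\kappa$. By the reciprocity formula and perfectness of the pairings mod $\m$, this kernel is the annihilator of the image of $\trJ_m$. Its dimension is $\dim_\kappa(\O_k\otimes_\O\kappa)=N^h_{p^k}$ when $m=p^k$, and $0$ off $p$-powers. Now lift a $\kappa$-basis of each $Q_m\otimes_\O\kappa$; your Nakayama and rank arguments then go through verbatim, and freeness of $Q$ follows a posteriori.
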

\begin{proof}
  \cite{strickland-morava-e-theory-of-symmetric}*{\S5}.
\end{proof}

\begin{cor}\label{cor:pth-powers}
  If $a,b\in \Witt_E(A)$ are such that $a_{p^d}=b_{p^d}$ for all $d\leq e$,
  then $a_m=b_m$ for all $m<p^{e+1}$.  
\end{cor}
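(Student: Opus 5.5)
We argue by strong induction on $m$, using the polynomial structure of $T$ from \eqref{lemma:structure-of-free-T-alg}. The point is that an element $a\in\Witt_E(A)$ is the same as a ring map $\phi_a\colon T\to A$. Under this identification, $a_m\in E^0B\Sigma_m\otimes_\O A\approx \Hom_\O(E_0^\vee B\Sigma_m,A)$ is just the restriction of $\phi_a$ to the degree-$m$ summand $T_m=E_0^\vee B\Sigma_m$. So it suffices to show that $\phi_a$ and $\phi_b$ agree on $T_m$ for every $m<p^{e+1}$.

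The first step is to identify where the polynomial generators live. I claim that indecomposables occur only in degrees that are powers of $p$, i.e.\ $N^h_m=0$ unless $m$ is a power of $p$. This holds because $(\Q_p/\Z_p)^h$ is a $p$-group, so its finite subgroups have $p$-power order; hence the count $N^h_m$ of subgroups of order $m$ vanishes unless $m=p^d$. By \eqref{lemma:structure-of-free-T-alg}, $T$ is then a polynomial ring on homogeneous generators of degrees $1,p,p^2,\dots$. (The degree-$1$ generator is consistent with $N^h_1=1$.)

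The second step is the induction. Consider the homogeneous piece $T_m$ with $m<p^{e+1}$.
\begin{itemize}
\item Every monomial in $T_m$ is a product of generators of degrees $p^d\le m$. Since $m<p^{e+1}$, each such $d$ satisfies $d\le e$.
\item A monomial that is a product of at least two generators is a product of elements of strictly smaller positive degree. By multiplicativity of $\phi_a,\phi_b$ and the inductive hypothesis, $\phi_a$ and $\phi_b$ agree on it.
\item A monomial consisting of a single generator has degree $m=p^d$ with $d\le e$. On it $\phi_a$ and $\phi_b$ agree because $a_{p^d}=b_{p^d}$, i.e.\ $\phi_a|_{T_{p^d}}=\phi_b|_{T_{p^d}}$.
\end{itemize}
These monomials span $T_m$ as an $\O$-module and $\phi_a,\phi_b$ are $\O$-linear, so $a_m=b_m$. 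The case $m=0$ is trivial, since $a_0=b_0=1$.

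The only step needing care is the first: checking that Strickland's polynomial basis is genuinely homogeneous, with generators in degree $m$ counted by $N^h_m$, so that nothing indecomposable sits in non-$p$-power degrees. This is exactly what \eqref{lemma:structure-of-free-T-alg} asserts, combined with the elementary vanishing $N^h_m=0$ for $m$ not a power of $p$. I do not expect a real obstacle here beyond matching the identification of $a_m$ with $\phi_a|_{T_m}$ from the representability proposition.
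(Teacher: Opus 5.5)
Your proof is correct and is the argument the paper intends when it calls the corollary immediate from \eqref{lemma:structure-of-free-T-alg}: since $N^h_m=0$ unless $m$ is a power of $p$, the homogeneous polynomial generators of $T$ lie in degrees $p^d$, so a ring map $T\to A$ is determined on $T_m$ for $m<p^{e+1}$ by its restrictions to the $T_{p^d}$ with $d\le e$. The induction on $m$ is harmless but not needed, since every monomial of degree $m<p^{e+1}$ is already a product of generators of degrees $p^d\le m$.
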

\begin{proof}
  Immediate from \eqref{lemma:structure-of-free-T-alg}.
\end{proof}

\begin{rem}
  Another way to think about the $E$-typical Witt vectors is that
  there are ring isomorphisms 
  \[
  \Witt_E(\pi_0 R) \approx \Hom_{\CAlg(h\Sp)}(\mathbb{S}\{x\}, R),
\]
natural in $R\in \CAlg(\Mod_E(\Sp_{K(n)}))$, where $\mathbb{S}\{x\}$
denotes the free commutative ring spectrum on one generator (and which
admits a cocommutative coalgebra structure).  
\end{rem}

\section{Witt filtration}

Our description of the generalized Witt ring gives rise to an evident
filtration by ideals (as noted in
\cite{burklund-schlank-yuan-chromatic-nullstellensatz}), called  the
\dfn{Witt filtration}. 
Explicitly, for $d\geq0$ define
\[
 \Wideal_d= \Wideal_d(A) \defeq \set{a\in \Witt_E(A)}{\text{$a_m=0$ if
     $p^d\nmid m$}}.    
\]
That is, $a\in \Wideal_d(A)$ when it is ``supported'' on $p^d\Z_{\geq 0}$.  
In view of \eqref{cor:pth-powers}, we can also say that
$a\in \Wideal_d(A)$ iff $a_m=0$ for $0<m<p^{d}$.

\begin{prop}\label{prop:witt-filtration-basic}
  The subset $\Wideal_d(A)\subseteq \Witt_E(A)$ is an ideal.  We have
  $\Wideal_0(A)=\Witt_E(A)$,
  $\Wideal_1(A)=\Ker[\Witt_E(A)\xra{a\mapsto a_1} A]$, and
  $\Wideal_d(A)\supseteq \Wideal_{d+1}(A)$.  The evident map
  \[
    \Witt_E(A) \xra{\sim} \lim_d \Witt_E(A)/\Wideal_d(A)
  \]
  is an isomorphism of $\O$-algebras, and $\bigcap_d \Wideal_d(A)=0$.
\end{prop}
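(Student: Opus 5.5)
We argue directly from the definitions of the ring operations on $\Witt_E(A)$ and from \eqref{cor:pth-powers}. Throughout we use the equivalent description of $\Wideal_d(A)$ as those $a$ with $a_m=0$ for $0<m<p^d$.

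\emph{$\Wideal_d(A)$ is an ideal.} First, $0\in\Wideal_d(A)$, since $0_m=0$ for $m>0$. For multiplication, $(ab)_m=a_mb_m$, so $a\in\Wideal_d(A)$ gives $(ab)_m=0$ whenever $a_m=0$; thus $\Wideal_d(A)$ absorbs products.

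For sums, take $a,b\in\Wideal_d(A)$ and $0<m<p^d$. In
\[
(a+b)_m=\sum_{i+j=m}\Tr(a_i\boxtimes b_j),
\]
each term has $i$ or $j$ in $(0,p^d)$, unless $(i,j)=(m,0)$ or $(0,m)$. Those two end terms are $a_m$ and $b_m$, both zero. So $(a+b)_m=0$.

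For negatives, we avoid computing $-a$ explicitly. Write $c=-a$, so $0=(a+c)_m=\sum_{i+j=m}\Tr(a_i\boxtimes c_j)$. We induct on $0<m<p^d$, assuming $c_j=0$ for $0<j<m$. Every term with $0<i$ and $0<j$ has $a_i=0$, and the term $(m,0)$ is $a_m=0$. Only $c_m$ remains, so $c_m=0$. This step, that $\Wideal_d(A)$ is closed under negation, is the only mildly non-formal point.

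\emph{The three identifications.} We have $\Wideal_0(A)=\Witt_E(A)$ trivially, since $1$ divides every $m$. For $\Wideal_1(A)$, the condition is $a_m=0$ for $0<m<p$, which by \eqref{cor:pth-powers} (with $e=0$, comparing $a$ to $0$) is equivalent to $a_1=0$. The inclusion $\Wideal_d(A)\supseteq\Wideal_{d+1}(A)$ holds because $p^{d+1}\mid m$ implies $p^d\mid m$.

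\emph{Separatedness.} If $a\in\bigcap_d\Wideal_d(A)$, then $a_m=0$ for every $m>0$. Together with $a_0=1$ this gives $a=0$.

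\emph{Completeness.} Given a compatible family $(\bar a^{(d)})$ in $\Witt_E(A)/\Wideal_d(A)$, choose lifts $a^{(d)}\in\Witt_E(A)$. Then $a^{(d+1)}-a^{(d)}\in\Wideal_d(A)$.

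\begin{claim*}
If $a-b\in\Wideal_d(A)$, then $a_m=b_m$ for all $m<p^d$.
\end{claim*}

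To prove the claim, write $a=b+x$ with $x\in\Wideal_d(A)$. For $m<p^d$, the sum formula gives $a_m=\sum\Tr(b_i\boxtimes x_j)$, and every term with $j>0$ vanishes, leaving $a_m=b_m$. The converse holds by symmetry, so $a\equiv b\pmod{\Wideal_d(A)}$ iff $a_m=b_m$ for $m<p^d$.

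Hence the components $a^{(d)}_m$ stabilize in $d$ for each fixed $m$. Define $a_m\defeq a^{(d)}_m$ for any $d$ with $p^d>m$. The conditions $a_0=1$ and $a_{i+j}|_{\Sigma_i\times\Sigma_j}=a_i\boxtimes a_j$ only involve finitely many indices, so they hold because they hold for $a^{(d)}$ with $d$ large. Thus $a\in\Witt_E(A)$, and $a\equiv a^{(d)}$ modulo $\Wideal_d(A)$ for each $d$ by the claim. This proves surjectivity; injectivity is separatedness.

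The map is an $\O$-algebra map because each quotient map is one. Therefore $\Witt_E(A)\to\lim_d\Witt_E(A)/\Wideal_d(A)$ is an isomorphism of $\O$-algebras.
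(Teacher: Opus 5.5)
Your proof is correct in substance, and it is essentially a component-level unpacking of the paper's one-line proof ("straightforward using \eqref{lemma:structure-of-free-T-alg}"). The polynomial structure of $T$ enters your argument only through the reformulation $a\in\Wideal_d(A)\iff a_m=0$ for $0<m<p^d$ and through \eqref{cor:pth-powers}; the rest is bookkeeping with the sum and product formulas. The lemma also allows a more formal route on the corepresenting side. The polynomial subalgebra of $T$ on generators of degree $<p^d$ is closed under the co-operations, since these preserve degree. So $\Witt_E(A)/\Wideal_d(A)$ is corepresented by that subalgebra, and the limit statement is just that $T$ is the colimit of these subalgebras. Your route is more elementary. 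The formal one also identifies the quotients as corepresentable functors, which is what \eqref{prop:witt-assoc-gr-abelian-gp} uses.

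One step is misjustified. The converse of your Claim, namely that $a_m=b_m$ for all $m<p^d$ implies $a-b\in\Wideal_d(A)$, does not follow ``by symmetry''. Swapping $a$ and $b$ only reproduces the same implication. You need exactly this converse to conclude $a\equiv a^{(d)}$ modulo $\Wideal_d(A)$ in the completeness argument. It is easy to supply: for $0<m<p^d$, every index occurring in $(a+(-b))_m=\sum_{i+j=m}\Tr(a_i\boxtimes(-b)_j)$ is $\le m<p^d$. Replacing each $a_i$ by $b_i$ then gives $(a+(-b))_m=(b+(-b))_m=0$.

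Two small simplifications are also available. Closure under negation is formal once you have absorption, since $-a=(-1)\cdot a$. Closure under addition follows directly from the original definition: if $p^d\nmid m=i+j$, then $p^d\nmid i$ or $p^d\nmid j$, so each term $\Tr(a_i\boxtimes b_j)$ has $a_i=0$ or $b_j=0$.
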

\begin{proof}
  Straightforward using \eqref{lemma:structure-of-free-T-alg}.
\end{proof}

\begin{rem}
  The Witt filtration is \emph{not multiplicative}:  $\Wideal_d(A)
  \Wideal_e(A)$ is not generally
  contained in $\Wideal_{d+e}(A)$ (except when the height $h=1$).
\end{rem}

\subsection{The associated graded of the Witt filtration}

\begin{prop}[\cite{burklund-schlank-yuan-chromatic-nullstellensatz}*{Prop.\ 3.27}]\label{prop:witt-assoc-gr-abelian-gp}
  There is an isomorphism of abelian groups
  \[
  \upsilon_d \colon \Wideal_d(A)/\Wideal_{d+1}(A) \xra{\sim} A^{\oplus N^h_{p^d}},
\]
natural in commutative $\O$-algebras.
\end{prop}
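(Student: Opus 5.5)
The plan is to use the corepresenting object $T$ and the graded polynomial structure of \eqref{lemma:structure-of-free-T-alg}. Write $T=\bigoplus_m T_m$ with $T_m=E_0^\vee B\Sigma_m$. Let $T_{>0}$ be the augmentation ideal, and let $Q=T_{>0}/T_{>0}^2$ be the graded module of indecomposables. By \eqref{lemma:structure-of-free-T-alg}, $Q_m$ is a free $\O$-module of rank $N^h_m$. It vanishes unless $m$ is a power of $p$, since $N^h_m=0$ otherwise; this vanishing is what underlies \eqref{cor:pth-powers}. Choose a homogeneous polynomial basis $\{x_{d,i}\}_{1\le i\le N^h_{p^d}}$ of $T$ with $x_{d,i}\in T_{p^d}$. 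Then an element $a\in\Witt_E(A)$, viewed as a ring map $a\colon T\to A$, is determined by the values $a(x_{d,i})\in A$. These values are arbitrary, so as a set $\Witt_E(A)\approx\prod_d A^{N^h_{p^d}}$. Moreover, $a\in\Wideal_d(A)$ exactly when $a$ kills $T_m$ for $0<m<p^d$, i.e.\ when $a(x_{e,i})=0$ for $e<d$.

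Define $\upsilon_d(a)=(a(x_{d,i}))_i$ for $a\in\Wideal_d(A)$. To make the map canonical, I would instead write it as the restriction of $a$ to $Q_{p^d}$, giving an element of $\Hom_\O(Q_{p^d},A)\approx A^{N^h_{p^d}}$. This restriction is well defined on $\Wideal_d(A)$: if $a$ kills all $T_m$ with $0<m<p^d$, then $a$ kills the decomposables in degree $p^d$, since these are sums of products of lower-degree positive elements. Naturality in $A$ is then clear. The map is surjective: given $\phi\colon Q_{p^d}\to A$, send $x_{d,i}$ to $\phi(x_{d,i})$ and all other generators to $0$. Its kernel is $\Wideal_{d+1}(A)$: if $a\in\Wideal_d(A)$ also kills $Q_{p^d}$, then it kills all of $T_{p^d}$. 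By \eqref{cor:pth-powers}, or directly since the degrees strictly between $p^d$ and $p^{d+1}$ are generated by lower degrees, $a$ then kills every $T_m$ with $0<m<p^{d+1}$.

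The main obstacle is additivity of $\upsilon_d$. Addition in $\Witt_E(A)$ corresponds to the coproduct $\Delta\colon T\to T\otimes_\O T$ dual to the transfer formula, under which $(a+b)(x)=(a\otimes b)(\Delta x)$. For $x\in T_{p^d}$ we write
\[
\Delta x = x\otimes 1+1\otimes x+\sum y'\otimes y'',
\]
with $y',y''$ homogeneous of positive degrees summing to $p^d$. For $a,b\in\Wideal_d(A)$, each cross term has a factor of positive degree $<p^d$, and $a$ or $b$ kills it. Hence $(a+b)(x)=a(x)+b(x)$. The expansion of $\Delta x$ uses that $\Delta$ is graded, which follows from the additive formula $(a+b)_m=\sum_{i+j=m}\Tr(a_i\boxtimes b_j)$, and counital with respect to the degree-$0$ part, because $0_m=0$ for $m>0$. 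So $\upsilon_d$ is a group homomorphism. Together with the bijection established above, this gives the natural isomorphism of abelian groups.
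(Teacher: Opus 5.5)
Your proposal is correct and is essentially the paper's argument. The paper's two-line proof also evaluates elements of $\Wideal_d(A)$ at a set of degree-$p^d$ polynomial generators of $T$, and gets additivity from the degree-preserving comultiplication $T\to T\otimes_\O T$; you supply the details it leaves implicit, namely passing to the indecomposables $Q_{p^d}$ to make the map choice-free, the surjectivity and kernel computations, and the vanishing of the cross terms in $\Delta x$ for $a,b\in\Wideal_d(A)$.
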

\begin{proof}
  In view of \eqref{lemma:structure-of-free-T-alg}, we can construct a
  natural bijection by 
  evaluating at a set of polynomial generators in degree $p^d$ of the
  representing object $T$.  The group structure on the domain is
  induced by a comultiplication $T\ra T\otimes_\O T$ which preserves
  degree.
\end{proof}

\begin{prop}[\cite{burklund-schlank-yuan-chromatic-nullstellensatz}*{Prop.\
    3.29}]\label{prop:witt-assoc-gr-perfect}
  For perfect $\kappa$-algebras $A$ there is a natural isomorphism of
  $A$-modules 
  \[
   \Wideal_d(A)/\Wideal_{d+1}(A) \xra{\sim} A^{\oplus N^h_{p^d}}.
 \]
\end{prop}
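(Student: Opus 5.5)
The plan is to take the abelian group isomorphism $\upsilon_d$ of \eqref{prop:witt-assoc-gr-abelian-gp} and first identify a natural $A$-module structure on $\Wideal_d(A)/\Wideal_{d+1}(A)$ for which $\upsilon_d$, suitably twisted by Frobenius, becomes $A$-linear. The starting observation is about Teichm\"uller multiples: for $a\in A$ and $x\in\Witt_E(A)$ we have $([a]x)_m=a^mx_m$. Under corepresentability this says the ring map $T\ra A$ classifying $[a]x$ is the one classifying $x$, precomposed with the grading automorphism of $T$ that scales degree $m$ by $a^m$; this is a ring map because $T$ is graded. Since $\upsilon_d$ evaluates $x$ on homogeneous polynomial generators of degree $p^d$ (\eqref{lemma:structure-of-free-T-alg}), this gives
\[
\upsilon_d([a]x)=a^{p^d}\,\upsilon_d(x).
\]
Also $[a]$ preserves each $\Wideal_e(A)$, so it acts on the quotient.

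When $A$ is perfect, I would therefore define the $A$-module structure on $\Wideal_d(A)/\Wideal_{d+1}(A)$ by $a\cdot x\defeq[a^{1/p^d}]x$, and twist the target accordingly. Concretely, set $\upsilon'_d\defeq \phi^{-d}\circ\upsilon_d$, where $\phi^{-d}$ applies the inverse of the $d$-th Frobenius coordinatewise on $A^{\oplus N^h_{p^d}}$. Then $\upsilon'_d$ is bijective, additive, and satisfies $\upsilon'_d(a\cdot x)=a\,\upsilon'_d(x)$. It is natural in perfect $\kappa$-algebras because Frobenius and Teichm\"uller lifts are natural.

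What remains is to check that $a\cdot x$ really is a module structure: it is multiplicative in $a$ and additive in $x$, so the issue is additivity in $a$. Since $[a+b]-[a]-[b]\in\Wideal_1(A)$, it suffices to prove the key lemma
\[
\Wideal_1(A)\cdot\Wideal_d(A)\subseteq\Wideal_{d+1}(A)\qquad\text{for $\kappa$-algebras } A.
\]
The same lemma shows that the whole $\Witt_E(A)$-action on the graded piece factors through $\Witt_E(A)/\Wideal_1(A)\approx A$, which makes the $A$-module structure canonical. By \eqref{cor:pth-powers} it is enough to show $(yx)_{p^d}=y_{p^d}x_{p^d}=0$ for $y\in\Wideal_1(A)$ and $x\in\Wideal_d(A)$.

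This is the step I expect to be the main obstacle. The plan there is as follows. Because $x_i=0$ for $0<i<p^d$, the element $x_{p^d}$ restricts to zero on every proper Young subgroup $\Sigma_i\times\Sigma_j$. By Frobenius reciprocity for transfers, $x_{p^d}$ then annihilates the transfer ideal $I_{\mathrm{tr}}\subseteq E^0B\Sigma_{p^d}\otimes_\O A$. Hence $y_{p^d}x_{p^d}$ depends only on the image of $y_{p^d}$ in $E^0B\Sigma_{p^d}/I_{\mathrm{tr}}\otimes_\O A$, which by Strickland is $\O_d\otimes_\O A$, the ring classifying subgroups of rank $p^d$. I would then show that, for $A$ of characteristic $p$, this image of $y_{p^d}$ is determined by $y_1$: in characteristic $p$ the relevant map $\O\ra\O_d\otimes_\O\kappa$ is a Frobenius-type map, so the image should be $y_1^{p^d}$ up to this identification. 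For $y\in\Wideal_1(A)$ this vanishes, which gives the lemma. If this identification turns out to be too delicate to establish directly, my fallback is to prove the lemma for the universal case, that is, on the representing object $T\otimes_{\O}\kappa$, using the explicit polynomial structure from \eqref{lemma:structure-of-free-T-alg}.
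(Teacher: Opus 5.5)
Your opening observation $\upsilon_d([a]x)=a^{p^d}\upsilon_d(x)$, followed by undoing the Frobenius, is exactly the paper's proof. The paper uses $\phi^{-d}\circ\upsilon_d$, which is $A$-linear for the Teichm\"uller action $c\cdot x=[c]x$. As written, though, you have applied the Frobenius twist twice. With $a\cdot x\defeq[a^{1/p^d}]x$ and $\upsilon'_d=\phi^{-d}\circ\upsilon_d$ you get $\upsilon'_d(a\cdot x)=\phi^{-d}\bigl(a\,\upsilon_d(x)\bigr)=a^{1/p^d}\,\upsilon'_d(x)$, not $a\,\upsilon'_d(x)$. Pair $[a]x$ with $\phi^{-d}\circ\upsilon_d$ (as the paper does), or pair $[a^{1/p^d}]x$ with $\upsilon_d$ itself.

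The real problem is the rest of the proposal, which is both unnecessary and wrong.

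It is unnecessary because of transport of structure. Once you have an additive bijection $v$ onto $A^{\oplus N^h_{p^d}}$ with $v(a\cdot x)=a\,v(x)$, every module axiom on the source, in particular $(a+b)\cdot x=a\cdot x+b\cdot x$, follows from the target by injectivity of $v$. There is nothing left to check.

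It is wrong because your key lemma $\Wideal_1(A)\Wideal_d(A)\subseteq\Wideal_{d+1}(A)$ fails for $h\geq 2$; this is the paper's remark that the Witt filtration is not multiplicative. In the paper's height $2$, $p=2$ example, $a\in I_1=(2,a)$ but $a^2\notin I_2=(4,2a,a^3-2)$, since $a^2\notin(a^3)$ in $\F_2\powser{a}$.

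The step that breaks is your claim that, in characteristic $p$, the ghost component $\gh_d(y)$ is determined by the first Witt component of $y$. By \eqref{prop:witt-quotient-p-exact-sequence}, the kernel of $\gh_1$ on $\Witt_E(\kappa)\cong\O$ is $p\O+I_2=(2,a^3)$ in that example. So $\gh_1(a)\neq 0$, even though $a\in I_1$ has vanishing first component. More generally, $\gh_d\colon\Witt_E(\kappa)\to\O_d\otimes_\O\kappa$ is surjective onto a ring of length $N^h_{p^d}>1$ for $h\geq 2$, $d\geq 1$. It therefore cannot factor through $\Witt_E(\kappa)/\Wideal_1(\kappa)\cong\kappa$.

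Consequently, the $\Witt_E(A)$-action on $\Wideal_d(A)/\Wideal_{d+1}(A)$ does not factor through $A$. The $A$-module structure exists only via the non-additive Teichm\"uller map, and its additivity is exactly what transport of structure supplies. Fix the twist and drop the last two paragraphs, and you have the paper's argument.
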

\begin{proof}
  Recall the multiplicative section $c\mapsto [c]\colon A\ra
  \Witt_E(A)$ (\S\ref{subsec:multiplicative-section}).   Under the map
  of \eqref{prop:witt-assoc-gr-abelian-gp} we have
  \[
  \upsilon_d([c]a) = c^{p^d}a.
\]
Thus when $A$ is perfect we obtain an isomorphism of modules
$\phi^{-d}\circ \nu_d$ by composing with the $p^{-d}$-power map in
each component.
\end{proof}

\begin{prop}[\cite{burklund-schlank-yuan-chromatic-nullstellensatz}*{Prop.\
    3.30}]\label{prop:witt-quotient-basechange}
For perfect $\kappa$-algebras $A$ the evident ring homomorphism  
\[
\Witt_p(A)\otimes_{\Witt_p(\kappa)} \Witt_E(\kappa)\ra \Witt_E(A)
\]
induces isomorphisms
$\Witt_p(A)\otimes_{\Witt_p(\kappa)}\Witt_E(\kappa)/\Wideal_d(\kappa)
\xra{\sim} \Witt_E(A)/\Wideal_d(A)$.  
\end{prop}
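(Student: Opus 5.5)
The strategy is induction on $d$ along the Witt filtration, comparing the associated graded pieces on both sides by means of \eqref{prop:witt-assoc-gr-perfect}.

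First I must say what the evident map is. The ring $\Witt_E(A)$ is an $\O$-algebra, and every $p$-torsion-free-ish $p$-complete structure is not needed here: the point is to produce a ring map $\Witt_p(A)\ra \Witt_E(A)$ compatible with $\Witt_p(\kappa)\ra\O\ra\Witt_E(\kappa)\ra \Witt_E(A)$. I will use the multiplicative section. Since $A$ is perfect of characteristic $p$, $\Witt_p(A)$ is the unique $p$-complete, $p$-torsion-free lift of $A$. It is characterized by the property that maps out of it to a $p$-complete ring $R$ correspond to multiplicative maps $A\ra R$ whose reduction is additive modulo $p$. Equivalently, I will use the universal property of Witt vectors of perfect rings. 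The ring $\Witt_E(A)$ is $p$-complete, since by \eqref{prop:witt-filtration-basic} it is the limit of the $\Witt_E(A)/\Wideal_d$. Each of these quotients is an iterated extension of the $p$-torsion groups $A^{\oplus N}$, by \eqref{prop:witt-assoc-gr-abelian-gp}. Composing $A\xra{[\;]}\Witt_E(A)\ra \Witt_E(A)/\Wideal_1= A$ gives the identity. Hence the Teichm\"uller map $[\;]$ induces the needed ring map. I will take this construction of the map as given, since it is called ``evident''.

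Both sides of the claimed isomorphism are modules over $\Witt_p(A)$. I filter the source by the images of $\Witt_p(A)\otimes_{\Witt_p\kappa}\Wideal_e(\kappa)$ for $e<d$, and the target by $\Wideal_e(A)/\Wideal_d(A)$. The map respects these filtrations because $\Wideal_e$ is an ideal and is natural in $\kappa\ra A$. I then argue by induction on $d$, using the five lemma on the short exact sequences
\[
0\ra \Wideal_{d}/\Wideal_{d+1}\ra \Witt_E/\Wideal_{d+1}\ra \Witt_E/\Wideal_d\ra 0 .
\]
For the source, $\Witt_p(A)$ is flat over $\Witt_p(\kappa)$: indeed it is $p$-torsion-free, and $\Witt_p(\kappa)$ is a DVR. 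So tensoring preserves this exact sequence for $\kappa$. Moreover, $\Wideal_d(\kappa)/\Wideal_{d+1}(\kappa)\cong\kappa^{N}$ is killed by $p$, so its tensor with $\Witt_p(A)$ is $\Witt_p(A)/p\otimes_\kappa \kappa^N = A^N$.

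The key step is therefore to check that the induced map on graded pieces,
\[
A\otimes_\kappa\bigl(\Wideal_d(\kappa)/\Wideal_{d+1}(\kappa)\bigr)\ra \Wideal_d(A)/\Wideal_{d+1}(A),
\]
is an isomorphism. By the naturality of $\upsilon_d$ in \eqref{prop:witt-assoc-gr-abelian-gp}, the functor $\Wideal_d/\Wideal_{d+1}$ is $A\mapsto A^{N}$ with the base-change maps acting componentwise. The $\Witt_p(A)$-module structure, by way of $[c]$, acts through $c^{p^d}$. After untwisting by $\phi^{-d}$ as in \eqref{prop:witt-assoc-gr-perfect}, which is legitimate since $A$ and $\kappa$ are perfect, the map becomes the canonical isomorphism $A\otimes_\kappa\kappa^N\cong A^N$. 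I expect the main obstacle to be this bookkeeping. One has to make sure that the $\Witt_p(A)$-action on $\Wideal_d/\Wideal_{d+1}$ really factors through $A$ with the Frobenius twist $c\mapsto c^{p^d}$, including on the non-Teichm\"uller elements. That holds because the action factors through $\Witt_p(A)/p=A$, since the graded pieces are $p$-torsion, and every element of $A$ is a Teichm\"uller residue. The remaining concern is that the Frobenius twist is compatible on both sides, so that the $\kappa$-linear tensor is formed with matching structures; perfectness of $\kappa$ handles this. With the graded pieces identified, the five lemma and induction finish the proof, starting from $d=0$, where both sides are zero.
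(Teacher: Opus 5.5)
Your proof of the isomorphism is correct and is the paper's argument. The paper only says the isomorphisms follow by induction on $d$ from \eqref{prop:witt-assoc-gr-perfect}, and you supply the details it leaves implicit: flatness of $\Witt_p(A)$ over the discrete valuation ring $\Witt_p(\kappa)$, so the source sequences stay exact; the five lemma; and the Frobenius untwisting that identifies the graded map with $A\otimes_\kappa\kappa^{N}\cong A^{N}$.

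The one step that fails as written is your justification of the map.

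\textbf{The gap.} The universal property you quote needs the Teichm\"uller map $A\to\Witt_E(A)$ to be additive modulo $p$. Knowing that $A\xra{[\;]}\Witt_E(A)\to\Witt_E(A)/\Wideal_1(A)=A$ is the identity only gives additivity modulo $\Wideal_1(A)$. For $h\geq 2$ that ideal is much larger than $p\Witt_E(A)$.

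\textbf{A repair.} Lift the identity of $A=\Witt_E(A)/\Wideal_1(A)$ along the thickenings $\Witt_E(A)/\Wideal_d(A)\to A$.
\begin{itemize}
\item These rings are killed by $p^d$.
\item The kernel $\Wideal_1(A)/\Wideal_d(A)$ must be nilpotent. This is not formal, since the Witt filtration is not multiplicative, but it holds for $\kappa$-algebras. For $a\in\Wideal_1(A)$, each $a_m$ restricts to $a_1^{\otimes m}=0$ on the trivial subgroup. So $a_m$ lies in $\mathfrak{n}_m\otimes_\kappa A$, where $\mathfrak{n}_m$ is the nilpotent augmentation ideal of the finite local $\kappa$-algebra $E^0B\Sigma_m\otimes_\O\kappa$. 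By \eqref{cor:pth-powers}, only the finitely many $m<p^d$ matter.
\item Formal \'etaleness of $\Witt_p(A)/p^d$ over $\Z/p^d$ then gives unique, compatible ring maps $\Witt_p(A)\to\Witt_E(A)/\Wideal_d(A)$.
\end{itemize}

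\textbf{Facts your graded computation uses silently.} Uniqueness of multiplicative lifts along nilpotent thickenings containing $p$ is what justifies two of them:
\begin{itemize}
\item the Teichm\"uller lift of $c\in A$ in $\Witt_p(A)$ maps to $[c]=(c^m)$ in $\Witt_E(A)$;
\item the $\Witt_p(\kappa)$-module structure on $\Witt_E(\kappa)$ coming from $\O$ agrees with the one coming from this map, so the map out of the tensor product is well defined.
\end{itemize}

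The paper is equally terse about the map, and you said you would take it as given. So this affects only your stated justification of the map, not the isomorphism argument itself.
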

\begin{proof}
  The ring homomorphism is induced by the universal property of
  $p$-typical Witt-vectors of perfect rings
  (\S\ref{subsec:p-typical-witt-of-perfect}).  The 
  isomorphisms are deduced by inductively applying 
  \eqref{prop:witt-assoc-gr-perfect}. 
\end{proof}

\section{Ghost components}
\label{sec:ghost-components}

For any $m\geq0$, the \dfn{transfer ideal} $\trJ_m$  in $E^0B\Sigma_m$ is
defined to be the image of
\[
(\Tr_{\Sigma_i \times \Sigma_{m-i}}^{\Sigma_m})\colon
\bigoplus_{0<i<m} E^0B\Sigma_i\times B\Sigma_{m-i}\ra E^0B\Sigma_m 
\]
the sum of the images of transfers from \emph{proper} subgroups of the
form $\Sigma_i\times \Sigma_{m-i}\leq \Sigma_m$.  It is in fact an
ideal, since $\Tr_H^G\colon E^0BH\ra E^0BG$ is an $E^0BG$-module map
(where $E^0BH$ is a $E^0BG$-module via the restriction map).
\begin{lemma}
  If $m\neq p^d$ then $\trJ_m$ is the unit ideal in  $E^0B\Sigma_m$.
  We have $\trJ_1=0$, while $\trJ_{p^d}$ for $d\geq1$ is 
  the
  image of the transfer map
  $\Tr^{\Sigma_{p^d}}_{(\Sigma_{p^{d-1}})^{\times p}}$.  
\end{lemma}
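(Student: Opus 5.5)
The plan is to handle the three assertions separately. Two tools do all the work: the double coset (Mackey) formula for restricting transfers in symmetric groups, and the fact that $E^0BP\to E^0BG$-type transfers for $p$-Sylow data behave well $p$-locally.

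\emph{The case $m\ne p^d$.} Write $m=p^d k$ with $p\nmid k$ and $k>1$, or more generally choose a partition $m=i+(m-i)$ with $0<i<m$ such that the binomial coefficient $\binom{m}{i}$ is prime to $p$. Such an $i$ exists precisely when $m$ is not a power of $p$: take $i=p^{v_p(m)}$ and use Lucas' theorem. Then compute $\Tr_{\Sigma_i\times\Sigma_{m-i}}^{\Sigma_m}(1)$ in $E^0B\Sigma_m$. The composite $\Tr\circ\Res$ is multiplication by $\Tr(1)$, and the transfer $\Tr(1)$ is a unit. To see this, I would use the standard fact that for $H\le G$, $\Tr_H^G(1)$ reduces in $E^0$ modulo the maximal ideal of the (local, complete) ring $E^0BG$ to the index $[G:H]$. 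That ring is local because $E^0BG\to E^0(\mathrm{pt})=\O\to\kappa$ has nilpotent kernel modulo $\m$ for finite groups; more precisely, the augmentation ideal of $E^0BG/\m$ is nilpotent. So $\Tr(1)\equiv\binom{m}{i}$ modulo a topologically nilpotent ideal, hence $\Tr(1)$ is a unit and $\trJ_m=(1)$. The point needing care is justifying that $\Tr(1)-[G:H]$ lies in the maximal ideal. I would argue this by restricting to the trivial group: $\Res^G_e\Tr_H^G(1)=[G:H]$ by the double coset formula, and the kernel of $\Res^G_e$ is contained in the maximal ideal because $E^0BG$ is a finite local $\O$-algebra (Strickland) with residue field $\kappa$.

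\emph{The case $m=1$.} Here there are no $i$ with $0<i<1$, so $\trJ_1=0$.

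\emph{The case $m=p^d$ with $d\ge1$.} Let $J'$ be the image of $\Tr^{\Sigma_{p^d}}_{(\Sigma_{p^{d-1}})^{\times p}}$; the goal is $J'=\trJ_{p^d}$. First, $J'\subseteq\trJ_{p^d}$, because transfer is transitive: factor through $\Sigma_{p^{d-1}}\times\Sigma_{p^d-p^{d-1}}\supseteq(\Sigma_{p^{d-1}})^{\times p}$. For the reverse inclusion, take $0<i<p^d$ and set $H=\Sigma_i\times\Sigma_{p^d-i}$. Since $v_p(i)<d$, the argument of the first case applied inside $H$ shows that $\Tr^{H}_{K}(1)$ is a unit for a suitable Young subgroup $K=\prod_j\Sigma_{p^{e_j}}$ refining $H$. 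Here $K$ comes from the $p$-adic digits of $i$ and of $p^d-i$: because $\binom{i}{\ldots}$ multinomials with these digit blocks are prime to $p$ by Lucas, all $e_j\le d-1$. Therefore every $x\in E^0BH$ lies in the image of $\Tr^H_K$, since $x=\Tr^H_K(\Res x)\cdot u^{-1}$ by Frobenius reciprocity. So $\Tr^{\Sigma_{p^d}}_H(x)$ lies in the image of $\Tr^{\Sigma_{p^d}}_K$.

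Finally, $K$ is conjugate to a subgroup of $(\Sigma_{p^{d-1}})^{\times p}$. To see this, pack the blocks of sizes $p^{e_j}$, with $e_j<d$, into $p$ bins of size $p^{d-1}$. This is possible because the sizes are powers of $p$ at most $p^{d-1}$ summing to $p^d$: sort them in decreasing order and fill greedily; divisibility guarantees exact packing. Then, by transitivity again, $\mathrm{Im}\,\Tr_K\subseteq J'$. The main obstacle is this reduction to Young subgroups with $p$-power blocks and the exact packing. The unit-transfer argument is routine once locality of $E^0BG$ is granted.
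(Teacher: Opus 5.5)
Your proof is correct and takes the same route as the paper, whose two-line argument uses two facts: a subgroup $\Sigma_i\times\Sigma_{m-i}$ containing a Sylow $p$-subgroup has surjective transfer, and for $m=p^d$ the Sylow subgroups of $\Sigma_i\times\Sigma_{p^d-i}$ are conjugate into $(\Sigma_{p^{d-1}})^{\times p}$. Your Lucas choice of $i$, your $p$-power Young subgroups, and the bin-packing make these steps explicit, and your locality argument showing $\Tr(1)$ is a unit supplies a justification the paper leaves implicit.
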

\begin{proof}
  If $m\neq p^d$, then there exists $0<i<m$ such that $\Sigma_i\times
  \Sigma_{m-i}$ contains a $p$-Sylow subgroup of $\Sigma_m$.  If
  $m=p^d$ then the $p$-Sylow subgroups of $\Sigma_i\times
  \Sigma_{m-i}$ for $0<i<m$ are conjugate to subgroups of
  $(\Sigma_{p^{d-1}})^p$.  
\end{proof}

Let 
\[
\O_d \defeq E^0B\Sigma_{p^d}/\trJ_{p^d}
\]
denote the corresponding quotient $\O$-algebra, called the \dfn{$d$th
  Strickland ring} and studied in 
\cite{strickland-finite-subgroups-of-formal-groups}, 
\cite{strickland-morava-e-theory-of-symmetric}.  We recall the following.
\begin{prop}[\cite{strickland-morava-e-theory-of-symmetric}*{Thm.\
    1.1}]\label{prop:o-d-free-rank}
  The ring $\O_d$ is a complete local ring.  
  As an $\O$-module, $\O_d$ is finitely generated and free of rank
  $N^h_{p^d}$.  
\end{prop}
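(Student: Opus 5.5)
This is Strickland's theorem, cited as \cite{strickland-morava-e-theory-of-symmetric}*{Thm.\ 1.1}. I would deduce it from the geometric identification of $\O_d$ together with the results already recorded above. The plan has three steps: identify $\O_d$ with the ring classifying subgroups, compute its rank, and then read off completeness and locality.

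\emph{Step 1: identify $\O_d$ with the ring classifying subgroups.} Let $G$ be the universal deformation over $\O$. Strickland's theorem \cite{strickland-finite-subgroups-of-formal-groups} says the functor of finite flat subgroup schemes $K\leq G$ of rank $p^d$ is represented by a ring $\O_{\mathrm{Sub}_d}$, and that this ring is finite free over $\O$. The first step is to produce an isomorphism $\O_d\approx \O_{\mathrm{Sub}_d}$.
\begin{itemize}
\item To get a map $\O_{\mathrm{Sub}_d}\ra \O_d$, I would use the power operation $P_{p^d}$ applied to the coordinate on $\C P^\infty$, composed with the quotient by $\trJ_{p^d}$. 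By the Ando--Hopkins--Strickland theory of isogenies \cite{ando-hopkins-strickland-h-infinity}, this produces a degree-$p^d$ isogeny, hence a subgroup of $G$ over $\O_d$.
\item To get the inverse, I would use the generalized character theory of Hopkins--Kuhn--Ravenel, or Strickland's direct argument. One checks that $E^0B\Sigma_{p^d}/\trJ$ maps injectively after rationalization and inverting $p$. There it is identified with functions on the set of subgroups of $(\Q_p/\Z_p)^h$ of order $p^d$.
\end{itemize}

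\emph{Step 2: freeness and rank.} Given the identification, freeness of rank $N^h_{p^d}$ follows from the geometric result: $\mathrm{Sub}_d(G)$ is finite flat over $\O$ of that degree. Flatness comes from the fact that $\O$ is regular and $\O_{\mathrm{Sub}_d}$ is Cohen--Macaulay of the correct dimension. The degree is then the number of level subgroups over the algebraic closure of the generic fibre, which is exactly $N^h_{p^d}$ since $G$ becomes $(\Q_p/\Z_p)^h$ there.

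Alternatively, there is a purely topological count that avoids geometry. By \eqref{lemma:structure-of-free-T-alg}, the degree $p^d$ part of $T$ contributes $N^h_{p^d}$ new polynomial generators. Dually, the indecomposables of $T$ in degree $p^d$ are $E_0^\vee B\Sigma_{p^d}$ modulo images of transfer products. These are free of rank $N^h_{p^d}$ because $T$ is polynomial on free modules.
\begin{itemize}
\item One then shows $\O_d$ is $\O$-linearly dual to these indecomposables.
\item The pairing comes from $E^0B\Sigma_m\otimes E_0^\vee B\Sigma_m\ra\O$. Under it, $\trJ_{p^d}$ annihilates exactly the elements orthogonal to the indecomposables, using the adjunction between transfer and restriction.
\end{itemize}
This gives $\O_d\approx\Hom_\O(QT_{p^d},\O)$, so $\O_d$ is free of rank $N^h_{p^d}$. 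I would prefer this route since it uses only results stated earlier, namely Strickland's freeness of $E_0^\vee B\Sigma_m$ and \eqref{lemma:structure-of-free-T-alg}.

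\emph{Step 3: completeness and locality.} $\O_d$ is finite over the complete local ring $\O$, so it is complete and semilocal. To show it is local, it suffices that $\O_d/\m$ is local. This holds because over $\kappa$ the formal group $\Gamma$ has a unique subgroup of each rank, namely the kernel of $\mr{Frob}^d$, so $\mathrm{Sub}_d(\Gamma)$ is supported at one point. Topologically, the same conclusion follows since $E^0B\Sigma_{p^d}$ is itself local, and quotients of local rings are local.

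The main obstacle is the duality in Step 2: showing that the annihilator of the decomposables is exactly $\trJ_{p^d}$, and not merely that it contains it. For this I would use the double coset formula, together with the fact that the composite of restriction to $\Sigma_i\times\Sigma_j$ followed by pairing corresponds to the comultiplication on $T$.
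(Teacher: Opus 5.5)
The paper gives no argument here; it just quotes Strickland's theorem. So your proposal has to stand on its own, and as written it has a genuine gap.

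\emph{Step~1.} This step does not establish $\O_d\approx\O_{\mathrm{Sub}_d}$. Saying that $\O_d$ ``maps injectively after inverting $p$'' presupposes that $\O_d$ is $p$-torsion free, which is part of what is to be proved. Also, a map of finitely generated $\O$-modules that becomes an isomorphism after inverting $p$ need not be one integrally. So the geometric count in Step~2 and your first locality argument rest on an unproved identification.

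\emph{The topological count.} This fails exactly where you suspected, and your proposed remedy does not fix it. Under the Kronecker pairing $E^0B\Sigma_m\otimes E_0^\vee B\Sigma_m\ra\O$, the product on $T$ is adjoint to \emph{restriction}. (That product is induced by $B\Sigma_i\times B\Sigma_j\ra B\Sigma_{i+j}$, as it must be for $T$ to corepresent $\Witt_E$.) Hence $\Hom_\O(Q(T)_{p^d},\O)$ is the submodule $\bigcap_{0<i<p^d}\ker\Res^{\Sigma_{p^d}}_{\Sigma_i\times\Sigma_{p^d-i}}$ of $E^0B\Sigma_{p^d}$, not the quotient $\O_d$. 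The Kronecker dual of $\O_d$ is instead the module of primitives of $T$.

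If you switch to the Frobenius form $(x,y)\mapsto\Tr^e_{\Sigma_m}(xy)$, transfer does become adjoint to restriction, and $\trJ_{p^d}^\perp$ is that intersection of kernels. But, granting the form is perfect, all you learn is that $\Hom_\O(\O_d,\O)$ is free of rank $N^h_{p^d}$. That is blind to torsion in $\O_d$. You would need $\trJ_{p^d}=(\trJ_{p^d}^\perp)^\perp$, which is essentially the freeness you are trying to prove. The double coset formula only gives orthogonality relations, i.e.\ the inclusion $\trJ_{p^d}\subseteq(\trJ_{p^d}^\perp)^\perp$.

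\emph{The fix.} The missing idea is to use $K(n)$-local self-duality as an isomorphism of rings, not as a pairing. The equivalences $\Sip B\Sigma_m\simeq\mc{F}(\Sip B\Sigma_m,\mb{S}_{K(n)})$ give isomorphisms $E_0^\vee B\Sigma_m\cong E^0B\Sigma_m$, compatible with K\"unneth, carrying the map induced by $\Sigma_i\times\Sigma_j\le\Sigma_{i+j}$ to the transfer. This is what underlies \eqref{prop:T-is-self-dual} and the description in \S\ref{sec:representability-of-witt-e} of the product on $T$ as ``induced by transfers.'' It follows that:
\begin{enumerate}
\item $T\cong\bigoplus_m E^0B\Sigma_m$ with the transfer product, whose decomposables in degree $m$ are by definition $\trJ_m$;
\item hence $\O_d\cong Q(T)_{p^d}$;
\item by \eqref{lemma:structure-of-free-T-alg}, $Q(T)_{p^d}$ is free on the images of the $N^h_{p^d}$ polynomial generators in degree $p^d$.
\end{enumerate}
No saturation question arises. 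This is legitimate in the paper's logical order, since \eqref{lemma:structure-of-free-T-alg} is quoted earlier.

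Your second locality argument then finishes the proof: $\O_d$ is a nonzero quotient of the local ring $E^0B\Sigma_{p^d}$ and is finite over the complete local ring $\O$. Step~1 is not needed for the statement at all.
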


For each $d\geq0$, the evident composite
\[
\Witt_E(A)\ra E^0B\Sigma_{p^d}\otimes_{\O}A \ra
(E^0B\Sigma_{p^d}/\trJ_{p^d})\otimes_{\O} A \approx \O_d\otimes_{\O}A
\]
is denoted $\gh_d$ and called the $d$th \dfn{ghost component}.  It is a ring
homomorphism
\[
\gh_d\colon \Witt_E(A) \ra \O_d\otimes_{\O}A.
\]
\begin{warn}
  The ghost components are not $\O$-algebra maps (except when $d=0$).  
\end{warn}

\begin{prop}
  The ideal $\Wideal_{d+1}(A)$ is contained in the kernel of
  $\gh_d$, whence $\gh_d$ factors through a ring homomorphism
  \[
  \ol\gh_d\colon \Witt_E(A)/\Wideal_{d+1}(A) \ra \O_d\otimes_{\O}A.
  \]
\end{prop}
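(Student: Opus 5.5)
The claim is that if $a\in \Wideal_{d+1}(A)$ then $\gh_d(a)=0$. Once that holds, $\gh_d$ vanishes on the ideal $\Wideal_{d+1}(A)$ (an ideal by \eqref{prop:witt-filtration-basic}), so it factors through a ring homomorphism $\ol\gh_d$ on the quotient. The factorization itself is formal, so the only content is the vanishing, which I would prove directly from the definitions.

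Let $a\in\Wideal_{d+1}(A)$. By definition, $a_m=0$ whenever $p^{d+1}\nmid m$. If $d\geq0$, then $0<p^d<p^{d+1}$, so $p^{d+1}\nmid p^d$ and hence $a_{p^d}=0$ in $E^0B\Sigma_{p^d}\otimes_\O A$. (Equivalently, by the reformulation following \eqref{cor:pth-powers}, $a_m=0$ for $0<m<p^{d+1}$.) Now $\gh_d(a)$ is the image of $a_{p^d}$ under the quotient map
\[
E^0B\Sigma_{p^d}\otimes_\O A\ra \O_d\otimes_\O A,
\]
and this image is zero.

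The only point to watch is that a zero element stays zero after tensoring and passing to the quotient $E^0B\Sigma_{p^d}/\trJ_{p^d}$, which is immediate. So there is no real obstacle.

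As a consistency check, look at the element $0\in\Witt_E(A)$. Its $m=0$ component is $1$, as the ring-with-unit structure requires; the definition of $0$ in the paper writes $0_1\defeq1$, presumably a typo for $0_0$. Since $p^d\geq1$, the component $0_{p^d}$ is $0$, so $\gh_d(0)=0$, in agreement with $\gh_d$ being a ring homomorphism.
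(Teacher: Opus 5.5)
Your argument is correct and is the same as the paper's: the paper simply observes that $\gh_d(a)$ depends only on $a_{p^d}$, and this component vanishes for $a\in\Wideal_{d+1}(A)$ because $p^{d+1}\nmid p^d$. Your side remark is also right: the paper's definition of $0\in\Witt_E(A)$ should read $0_0=1$, not $0_1=1$.
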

\begin{proof}
  Immediate from the fact that $\gh_d(a)$ depends only on $a_{p^d}$.  
\end{proof}

\section{The ghost component tower for perfect $\kappa$-algebras}
\label{sec:ghost-tower}

The purpose of this section is to show that if $A$ is a perfect
$\kappa$-algebra, then the ghost components $\gh_d\colon \Witt_E(A)\ra
\O_d\otimes_\O A $ can be assembled into a ring homomorphism
\[
\gh_\infty\colon \Witt_E(A)/p \ra \lim_{\longleftarrow} \O_d\otimes_\O A.
\]
The target of $\gh_\infty$ is an inverse limit induced by 
ring  homomorphisms $\ol{\rho}_d\colon \O_d\otimes_\O \kappa \ra
 \O_{d-1}\otimes_\O\kappa$.  The
$\ol\rho_d$ are not $\kappa$-algebra maps, but rather satisfy
$\ol\rho_d(ca)= c^{1/p}\ol\rho_d(a)$ for $c\in \kappa$.  

\subsection{Base-change along Frobenius}

For a commutative ring $R$, we write $\phi\colon R\ra R/p$ for the
ring homomorphism defined by 
$\phi(x)=x^p$ (mod $p$).  Note that $R$ is a perfect $\F_p$-algebra if
and only if $\phi$ is an isomorphism.

We have two functors $\Mod_R\ra \Mod_R$:
\begin{align*}
  M^{(p)} &\defeq M\rtensor{R}{\phi} R/p,
  \\
  M^{[p]} &\defeq \Cok\bigl[ M^{\otimes p} \xra{N} (M^{\otimes
            p})^{\Sigma_p}\bigr]. 
\end{align*}
Here $M^{(p)}$ is regarded as an $R$-module via the right-hand factor
(and so in fact defines a functor $\Mod_R\ra \Mod_{R/p}$), while
$M^{[p]}$ is a quotient 
module of $(M^{\otimes p})^{\Sigma_p}$.   The map $N=\sum_{\sigma\in
  \Sigma_p} \sigma$, where the symmetric group permutes factors of
$M^{\otimes p}$.

Given $x\in M$  write $x^{(p)}\defeq x\otimes 1 \in M^{(p)}$.
The map $(-)^{(p)}\colon M\ra M^{(p)}$ is an abelian group homomorphism satisfying
$(cx)^{(p)}=c^px^{(p)}$ 
for $c\in R$.  Note that if $R$ is a perfect $\F_p$-algebra then
$(-)^{(p)}$ is always an isomorphism.

Similarly, we write 
$x^{[p]}\defeq [x^{\otimes p}]\in M^{[p]}$.
The function $(-)^{[p]}\colon M\ra M^{[p]}$ 
also satisfies $(cx)^{[p]}=c^px^{[p]}$, but is not a homomorphism in general.

\begin{lemma}
  Suppose $R$ is a $\Z_{(p)}$-algebra.  Then the map $(-)^{[p]}\colon
  M\ra M^{[p]}$ is a homomorphism of abelian groups, and $px^{[p]}=0$  
  for all $x\in M$.  Furthermore, it extends to a homomorphism of
  $R$-modules
  \[
  \gamma \colon M^{(p)} \ra M^{[p]}
\]
satisfying $\gamma(x^{(p)})=x^{[p]}$.  
\end{lemma}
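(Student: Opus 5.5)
We prove additivity by expanding $(x+y)^{\otimes p}$, kill $p x^{[p]}$ with the norm, and then use the two resulting identities to descend to $M^{(p)}$.

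\emph{Additivity.} For $x,y\in M$, expand
\[
(x+y)^{\otimes p}=\sum_{w} w_1\otimes\cdots\otimes w_p,
\]
where $w$ runs over words in $\{x,y\}$ of length $p$. The group $\Sigma_p$ permutes these words. Restrict to the cyclic subgroup $C_p\leq\Sigma_p$. Every word other than $x\cdots x$ and $y\cdots y$ has a free $C_p$-orbit of size $p$. Group the mixed terms into $C_p$-orbits; each orbit sum has the form $\sum_{g\in C_p} g\cdot w$. This is not yet in the image of $N=\sum_{\sigma\in\Sigma_p}\sigma$, so we instead work with $\Sigma_p$-orbits of words with $i$ copies of $x$, $0<i<p$. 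Such an orbit has stabilizer $\Sigma_i\times\Sigma_{p-i}$, so its sum is
\[
S_i=\frac{1}{i!\,(p-i)!}\,N(x^{\otimes i}\otimes y^{\otimes p-i}).
\]
Because $0<i<p$, the factor $i!(p-i)!$ is prime to $p$, hence invertible in the $\Z_{(p)}$-algebra $R$. Therefore $S_i$ lies in the image of $N$. It follows that
\[
(x+y)^{[p]}=x^{[p]}+y^{[p]}
\]
in the cokernel $M^{[p]}$. Also $0^{[p]}=0$, so $(-)^{[p]}$ is a homomorphism of abelian groups.

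\emph{The identity $px^{[p]}=0$.} We have $N(x^{\otimes p})=p!\,x^{\otimes p}$, so $p!\,x^{[p]}=0$. Since $(p-1)!$ is a unit in $R$, this gives $p\,x^{[p]}=0$. Consequently $M^{[p]}$ is annihilated by $p$ on the image of $(-)^{[p]}$.

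\emph{Construction of $\gamma$.} Define
\[
\gamma\colon M\otimes_{R}{}^{\phi}R/p\ra M^{[p]},\qquad x\otimes r\mapsto r\,x^{[p]},
\]
where $r\in R/p$ acts on $x^{[p]}$ through any lift to $R$. This is well defined because $p$ kills $x^{[p]}$, and $r\,x^{[p]}$ lies in the $R$-submodule generated by such elements. To see that the formula respects the tensor relations:
\begin{itemize}
\item It is additive in $x$ by the first step.
\item It is additive in $r$ trivially.
\item It is balanced: $(cx)\otimes r$ and $x\otimes c^p r$ both map to $c^p r\,x^{[p]}$, using $(cx)^{[p]}=c^p x^{[p]}$.
\end{itemize}
The map is $R$-linear for the right-hand module structure on $M^{(p)}$, and $\gamma(x^{(p)})=\gamma(x\otimes 1)=x^{[p]}$.

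The only real obstacle is the first step: recognising that the mixed terms form $\Sigma_p$-orbits whose stabilizers have index-$p$-free orders. This is exactly where the hypothesis that $R$ is a $\Z_{(p)}$-algebra is used.
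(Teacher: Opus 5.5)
Your proof is correct and follows the paper's argument essentially step for step. The shared ingredients are the stabilizer count $N(x^{\otimes i}\otimes y^{\otimes p-i}) = i!\,(p-i)!$ times the orbit sum with $i!\,(p-i)!$ a unit in $R$, the identity $N(x^{\otimes p}) = p!\,x^{\otimes p}$, and defining $\gamma$ on simple tensors; the detour through $C_p$-orbits is unnecessary and can be deleted. Your formula $\gamma(x\otimes r) = r\,x^{[p]}$ is in fact the right one, since it respects the relation $cx\otimes r = x\otimes c^p r$ and is $R$-linear in the right-hand factor, whereas the paper's displayed $a^p x^{[p]}$ appears to be a slip.
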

\begin{proof}
  For $x,y\in M$, we have that
  \[
  N(x^{\otimes i} \otimes y^{\otimes p-i}) =
  i!\; (p-i)!\; \left[\text{symmetrization of $x^{\otimes i}\otimes
      y^{\otimes p-i}$}\right],
\]
Thus for $p$-local $R$, 
\[
(x+y)^{\otimes p} \equiv x^{\otimes p}+ y^{\otimes p} \mod \Im N,
\]
so $(-)^{[p]}$ is a homomorphism.  Similarly, $N(x^{\otimes p})=p!\;
x^{\otimes p}$ implies that $px^{\otimes p} \equiv 0\mod \Im N$.  
The map $\gamma$ is then defined  by $\gamma(x\otimes
a) \defeq a^p x^{[p]}$. 
\end{proof}

\begin{prop}
  If $R$ is a $\Z_{(p)}$-algebra and $M$ a flat $R$-module, then
  $\gamma\colon M^{(p)}\ra M^{[p]}$ is an isomorphism of $R$-modules.
\end{prop}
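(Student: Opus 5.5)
Both functors $M\mapsto M^{(p)}$ and $M\mapsto M^{[p]}$ commute with filtered colimits, and $\gamma$ is natural in $M$. By Lazard's theorem a flat $R$-module is a filtered colimit of finitely generated free $R$-modules. So it suffices to prove that $\gamma$ is an isomorphism when $M=R^{\oplus n}$ is free of finite rank. For $(-)^{(p)}$, commuting with filtered colimits is clear since it is a tensor product. For $(-)^{[p]}$, note that $M\mapsto M^{\otimes p}$ commutes with filtered colimits (the diagonal of a filtered category is cofinal in its $p$-fold product). Taking $\Sigma_p$-invariants is a finite limit, so it commutes with filtered colimits in modules, and cokernels commute with colimits.

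For $M=R^{\oplus n}$ with basis $e_1,\dots,e_n$, I will compute both sides explicitly. The source is $M^{(p)}\approx (R/p)^{\oplus n}$, with basis $e_i^{(p)}$. For the target, $M^{\otimes p}$ is a permutation module on the set $S$ of words $w=(i_1,\dots,i_p)$. I decompose $S$ into $\Sigma_p$-orbits and treat the two kinds of orbit separately.

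\emph{Constant words.} A constant word $(i,\dots,i)$ is a fixed point, contributing a summand $R\cdot e_i^{\otimes p}$. On this summand $N$ is multiplication by $p!$, and $p!$ is $p$ times a unit in a $\Z_{(p)}$-algebra. So the contribution to the cokernel is $R/p$, generated by $e_i^{[p]}$.

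\emph{Non-constant words.} A non-constant word has stabilizer $H$ a Young subgroup $\prod_j \Sigma_{k_j}$ with all $k_j<p$, so $|H|$ is invertible in $R$. The summand is the induced module $R[\Sigma_p/H]$. Its invariants are free of rank one on the orbit sum $s$, and $N(w)=|H|\,s$. Thus $N$ surjects onto the invariants of this summand, which contributes nothing to the cokernel.

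Hence $M^{[p]}\approx\bigoplus_i R/p\cdot e_i^{[p]}$. Since $\gamma(e_i^{(p)})=e_i^{[p]}$, $\gamma$ sends a basis to a basis and is an isomorphism.

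The only step needing care is the reduction to free modules: one must check that $\gamma$ is compatible with the colimit identifications, which follows from its naturality. One must also check that invariants commute with filtered colimits, which holds because $\Sigma_p$ is finite. The orbit computation itself is routine.
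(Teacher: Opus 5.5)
Your proof is correct and follows the same route as the paper: reduce via preservation of filtered colimits (Lazard) to finitely generated free $M$, then show the $e_i^{[p]}$ form an $R/p$-basis of $M^{[p]}$. Your orbit-by-orbit computation is exactly the ``straightforward argument'' the paper leaves implicit: constant words contribute $R/p!R=R/pR$, and non-constant words have Young-subgroup stabilizers of order prime to $p$, so $N$ surjects onto their invariants.
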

\begin{proof}
  The functors $M\mapsto M^{(p)}, M^{[p]}$ preserve filtered colimits,
  so we reduce to the case of finitely generated  free modules.  Given
  a choice of basis $e_1,\dots, e_n$ of $M$, a straightforward
  argument (using that $R$ is $p$-local) shows that $e_1^{[p]},\dots,
  e_n^{[p]}$ is a basis of $M^{[p]}$ as a free $R/p$-module.  The
  claim follows.
\end{proof}

In what follows, we will only work with flat modules over $p$-local
rings, so we freely make  use of the isomorphism
$M^{(p)}\approx M^{[p]}$.

It is convenient to note here that $(-)^{(p)}=(-)^{(p)_R}\colon
\Mod_R\ra \Mod_{R}$ is 
lax symmetric monoidal, and so sends commutative $R$-algebras to
commutative $R$-algebras.  Furthermore, its formation is compatible
with base change: we have $(M\otimes_R S)^{(p)_S}\approx
M^{(p)_R}\otimes_R S$ for ring homomorphisms  $R\ra S$.

\subsection{The maps $\rho_d$}

We now define $\O$-algebra homomorphisms $\rho_d\colon \O_d\ra
\O_{d-1}^{(p)}$.  In fact, we will construct $A$-algebra homomorphisms
\[
\rho_d\colon \O_d\otimes_\O A\ra (\O_{d-1}\otimes_\O A)^{(p)}=
\O_{d-1}^{(p)}\otimes_\O A
\]
which are natural in the $\O$-algebra $A$, so that specializing to
$A=\O$ gives the desired map.

\textit{Construction of $\rho_d$.}  Consider the restriction and
transfer maps
\[
\Res\colon
E^0B\Sigma_{p^d}\otimes_\O A\rightleftarrows E^0B\Sigma_{p^{d-1}}^p\otimes_\O A
\noloc\! \Tr.
\]
As the $E^0B\Sigma_m$ is are finitely generated free $\O$-modules, they are flat,
and they participate in K\"unneth isomorphisms.  Thus $\Res$ induces a
map of $A$-algebras
\[
E^0B\Sigma_{p^d}\otimes_\O A \ra \bigl[
E^0B\Sigma_{p^{d-1}}\otimes_\O A]^{[p]},
\]
since $\Res(a) \in (E^0(B\Sigma_{p^{d-1}}\otimes_\O A)^{\otimes
  p})^{\Sigma_p}\subseteq E^0(B\Sigma_{p^{d-1}}\otimes_\O A)^{\otimes p}\approx
E^0B\Sigma_{p^{d-1}}^p\otimes_\O A$.   
I claim this descends to quotients by transfer ideals, giving
\[
E^0B\Sigma_{p^d}/\trJ_{p^d}\otimes_\O A \ra
[E^0B\Sigma_{p^{d-1}}/\trJ_{p^{d-1}}\otimes_\O A]^{[p]},
\]
which is the desired map $\rho_d$.

We prove this by exploiting the functoriality of Borel equivariant
cohomology with respect to spans of finite covering maps (aka the
``double-coset formula''). 
Write $X=\Sigma_{p^d}/\Sigma_{p^{d-1}}^p$, which
we identify with the set of functions
$f\colon \ul{p^d}\ra \ul{p}$ whose fibers all have size $p^{d-1}$.
This decomposes into $\Sigma_{p^d}$-orbits as
\[
X\times X \approx \coprod_{\sigma\in \Sigma_p} X_\sigma \;\amalg\;
\coprod_\alpha Y_\alpha. 
\]
Here $X_\sigma=\set{(f,\sigma f)}{f\in X}\subseteq X\times X$ is an
orbit isomorphic to $X$, while the each remaining orbits $Y_\alpha$ 
has isotropy groups conjugate to subgroups of the form
$H_\alpha = \Sigma_{p^{d-1}}^{i-1}\times (\Sigma_i\times \Sigma_{p^{d-1}-i})
\times \Sigma_{p^{d-1}}^{p-i}$ with $0<i\leq p$.
From this we read off that
\[
\Res\Tr(x) = \sum_{\sigma\in \Sigma_p} \sigma(x) + \sum_{\alpha}
y_\alpha \in E^0B\Sigma_{p^{d-1}}^p\otimes_\O A,
\]
where $y_\alpha$ is in the image of the transfer from $H_\alpha$, and
so projects to $0$ in $(E^0B\Sigma_{p^{d-1}}/\trJ_{p^{d-1}}\otimes_\O
A)^{\otimes p}$.

\begin{prop}
The diagram 
\[\xymatrix@C=35pt{
  {\Witt_E(A)} \ar[r]^-{\gh_d} \ar[d]_{\gh_{d-1}}
  & {\O_d\otimes_\O A}  \ar[d]^{\rho_d}
  \\
  {\O_{d-1}\otimes_\O A} \ar[r]_--{x\mapsto x^{(p)}}
  & {(\O_{d-1}\otimes_\O A)^{(p)}} 
  }\] 
commutes.  
\end{prop}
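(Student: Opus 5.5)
Chase an element $a=(a_m)\in\Witt_E(A)$ around the square. Going right then down, $\gh_d(a)$ is the image of $a_{p^d}$ in $\O_d\otimes_\O A$. Applying $\rho_d$ means: restrict $a_{p^d}$ along $\Sigma_{p^{d-1}}^p\le\Sigma_{p^d}$, regard the result as an element of $\bigl[E^0B\Sigma_{p^{d-1}}\otimes_\O A\bigr]^{[p]}$, and project to $[\O_{d-1}\otimes_\O A]^{[p]}$. The Witt vector condition $a_{i+j}|_{\Sigma_i\times\Sigma_j}=a_i\boxtimes a_j$, applied inductively ($p-1$ times), gives
\[
a_{p^d}|_{\Sigma_{p^{d-1}}^p} = a_{p^{d-1}}\boxtimes\cdots\boxtimes a_{p^{d-1}},
\]
which under the K\"unneth isomorphism is the tensor $a_{p^{d-1}}^{\otimes p}\in (E^0B\Sigma_{p^{d-1}}\otimes_\O A)^{\otimes p}$, with tensor product over $A$. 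Its class in the $[p]$-construction is therefore $(a_{p^{d-1}})^{[p]}$.

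Next I use naturality of $(-)^{[p]}$ under the quotient map $q\colon E^0B\Sigma_{p^{d-1}}\otimes_\O A\to\O_{d-1}\otimes_\O A$. The map $q^{\otimes p}$ is $\Sigma_p$-equivariant and commutes with the norm $N$, so it induces $q^{[p]}$ on cokernels, and $q^{[p]}(x^{[p]})=q(x)^{[p]}$. Hence $\rho_d(\gh_d(a))=\gh_{d-1}(a)^{[p]}$ in $(\O_{d-1}\otimes_\O A)^{[p]}$.

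Finally, I identify the target with $(\O_{d-1}\otimes_\O A)^{(p)}$ through $\gamma$, which is how the paper reads the target of $\rho_d$. Since $\gamma(x^{(p)})=x^{[p]}$, the element $\gh_{d-1}(a)^{[p]}$ corresponds to $\gh_{d-1}(a)^{(p)}$, which is the other way around the square. This uses the proposition that $\gamma$ is an isomorphism for flat modules over $p$-local rings. One has to check that $\O_{d-1}\otimes_\O A$ is flat over $A$, which follows from \eqref{prop:o-d-free-rank}, and that the rings involved are $p$-local, which holds since $\O$ is $p$-complete. The orientation of $\gamma$ is then only a matter of convention.

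The main obstacle is the bookkeeping: making sure that the $A$-linear Künneth identification, the symmetric-invariants inclusion, and the passage to quotients by transfer ideals, as used in constructing $\rho_d$, all agree with the identification of $a_{p^{d-1}}^{\boxtimes p}$ with $a_{p^{d-1}}^{\otimes p}$. Once $\rho_d$ is known to be well defined, which was established just above via the double coset formula, no transfer terms remain in this check. So the commutativity reduces to the multiplicativity axiom of $\Witt_E$ together with naturality of $(-)^{[p]}$.
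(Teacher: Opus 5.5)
Your proposal is correct and is the same argument as the paper's, which consists of the one-line observation that $\Res(a_{p^d})=(a_{p^{d-1}})^{\otimes p}$ by the defining condition of $\Witt_E$. The paper leaves implicit the passage to transfer-ideal quotients and the identification of $M^{[p]}$ with $M^{(p)}$ via $\gamma$; you spell out these naturality and flatness checks.
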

\begin{proof}
  Immediate from the construction of $\rho_d$, since for $a\in
  \Witt_E(A)$ we have that $\Res(a_{p^d})= (a_{p^{d-1}})^{\otimes p}$. 
\end{proof}

\subsection{The maps $\ol{\rho}_d$}
\label{subsec:ol-rho-d}

When $A$ is a perfect $\kappa$--algebra, the map along the bottom is an
isomorphism.  In this case write $\ol\rho_d$ for the composite of
\[
\O_d\otimes_\O A \xra{\rho_d} (\O_{d-1}\otimes_\O A)^{(p)} \xla{\approx}
\O_{d-1}\otimes_\O A.
\]
This is isomorphic to a map of the form $\ol\rho_d\otimes \id_A$, where
$\ol\rho_d \colon \O_d\otimes_\O \kappa\ra \O_{d-1}\otimes_\O \kappa$
is  the 
specialization to $A=\kappa$.  Note that $\ol\rho_d$ is not an
$\kappa$-algebra map, but instead satisfies
$\ol\rho_d(cx)=c^{1/p}\ol\rho_d(x)$ for $c\in \kappa$.

\begin{cor}\label{cor:rho-compat-with-ghost}
  For perfect $\kappa$-algebras $A$ the diagram
  \[\xymatrix{
    {\Witt_E(A)} \ar[r]^{\gh_d} \ar[dr]_{\gh_{d-1}}
    & {\O_d\otimes_\O A} \ar[d]
    & {(\O_d\otimes_\O \kappa)\otimes_\kappa A} \ar[l]_-{\approx}
    \ar[d]^{\ol\rho_d\otimes \id}
    \\
    & {\O_{d-1}\otimes_\O A}
    & {(\O_{d-1}\otimes_\O\kappa)\otimes_\kappa A} \ar[l]_-{\approx}
  }\]
commutes.
\end{cor}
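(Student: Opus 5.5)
The plan is to derive the corollary directly from the preceding proposition (the square $\rho_d\circ\gh_d = (-)^{(p)}\circ\gh_{d-1}$). The only extra ingredients are the definition of $\ol\rho_d$ in \S\ref{subsec:ol-rho-d} and the naturality of $\rho_d$ in the $\O$-algebra $A$. The left vertical arrow in the diagram is to be read as the composite of $\rho_d$ with the inverse of the Frobenius isomorphism $x\mapsto x^{(p)}\colon \O_{d-1}\otimes_\O A\to(\O_{d-1}\otimes_\O A)^{(p)}$. That is, it is the map called $\ol\rho_d$ at the level of $A$.

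First I will check that $x\mapsto x^{(p)}$ is a bijection when $A$ is a perfect $\kappa$-algebra. Since $\O_{d-1}$ is free over $\O$ of finite rank (\eqref{prop:o-d-free-rank}), the module $M=\O_{d-1}\otimes_\O A$ is free over $A$. For a free $A$-module with basis $e_i$, the map $(-)^{(p)}$ sends $\sum c_ie_i$ to $\sum c_i^p e_i^{(p)}$. Here $M^{(p)}=M\otimes_{A,\phi}A$ is free on the $e_i^{(p)}$, because $A/p=A$ and $\phi$ is an automorphism. So the map is bijective. Given this, the triangle on the left commutes: compose the proposition's square with the inverse of the bottom arrow.

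Second, I will identify this $A$-level map with $\ol\rho_d\otimes\id_A$ under the horizontal isomorphisms $(\O_j\otimes_\O\kappa)\otimes_\kappa A\cong \O_j\otimes_\O A$. By naturality of $\rho_d$ in $A$ (it is base-changed from $A=\O$, and $(-)^{(p)}$ commutes with base change), $\rho_d$ for $A$ equals $\rho_d$ for $\kappa$ tensored along $\kappa\to A$. The twisted tensor product also needs care. By the semilinearity $\ol\rho_d(cx)=c^{1/p}\ol\rho_d(x)$, the correct meaning of $\ol\rho_d\otimes\id$ is $x\otimes a\mapsto \ol\rho_d(x)\otimes a^{1/p}$. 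Equivalently, it is the map obtained by identifying $(\O_{d-1}\otimes_\O\kappa)^{(p)}\otimes_\kappa A$ with $(\O_{d-1}\otimes_\O A)^{(p)}$ and then untwisting by Frobenius on $A$. I will check on elementary tensors that the composite $(x\otimes a)\mapsto \rho_d(x)\cdot a \in (\O_{d-1}\otimes_\O A)^{(p)}$, pulled back through $(-)^{(p)}$, gives $\ol\rho_d(x)\otimes a^{1/p}$. This uses $(y\otimes a^{1/p})^{(p)}=y^{(p)}\cdot a$.

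This bookkeeping of the Frobenius twist is the only real obstacle. It is not deep, but the identification must be made with the semilinear convention, since a literal $\kappa$-linear tensor of a semilinear map is not well defined. Once it is settled, the square on the right commutes by construction, and the whole diagram commutes.
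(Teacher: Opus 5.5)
Your proposal is correct and follows the paper's route. The corollary is the preceding proposition composed with the inverse of the Frobenius isomorphism $x\mapsto x^{(p)}$, which is how $\ol\rho_d$ is defined at the level of $A$, together with naturality of $\rho_d$ in $A$ to identify it with the base change of the map at $A=\kappa$. Your reading of $\ol\rho_d\otimes\id$ as $x\otimes a\mapsto\ol\rho_d(x)\otimes a^{1/p}$ correctly makes precise the paper's looser phrase ``isomorphic to a map of the form $\ol\rho_d\otimes\id_A$''.
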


We write $\gh_\infty\colon \Witt_E(A)\ra \lim
\O_d\otimes_\O A$ for the map to the limit.  As the target is of
characteristic $p$, it factors through a map $\Witt_E(A)/p\ra \lim
\O_d\otimes_\O A$, which we also denote by $\gh_\infty$.

\section{Multiplication by $p$ in the Witt filtration}
\label{sec:mult-p-witt-filtration}

We prove our main result on the Witt filtration of $\Witt_E(A)$.

\begin{thm}\label{thm:witt-filtration-compat-with-p}
  Let $A$ be a perfect $\kappa$-algebra.  Then $\Witt_E(A)$ is
  $p$-torsion free.  Furthermore,  for every $d\geq0$ we have that
  $p\Wideal_d(A)= \Wideal_{d+1}(A)\cap p\Witt_E(A)$.
\end{thm}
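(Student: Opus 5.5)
The plan is to show that multiplication by $p$ raises Witt filtration by exactly one. Concretely, I would show that $p\Wideal_d(A)\subseteq \Wideal_{d+1}(A)$ and that the induced map
\[
\bar p_d\colon \Wideal_d(A)/\Wideal_{d+1}(A)\lra \Wideal_{d+1}(A)/\Wideal_{d+2}(A)
\]
is injective. Granting this, both assertions of the theorem are formal consequences of \eqref{prop:witt-filtration-basic}.

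\emph{Torsion-freeness.} Let $a\neq 0$. Since $\bigcap_d\Wideal_d=0$, there is a $d$ with $a\in\Wideal_d\setminus\Wideal_{d+1}$. Injectivity of $\bar p_d$ then gives $pa\notin\Wideal_{d+2}$, so in particular $pa\neq0$.

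\emph{The identity $p\Wideal_d=\Wideal_{d+1}\cap p\Witt_E(A)$.} The inclusion $\subseteq$ is the first step below. For $\supseteq$, suppose $pb\in\Wideal_{d+1}$ and let $e$ be maximal with $b\in\Wideal_e$. If $b\neq 0$ and $e<d$, then $pb\in\Wideal_{e+1}\setminus\Wideal_{e+2}$. This contradicts $pb\in\Wideal_{d+1}\subseteq\Wideal_{e+2}$. Hence $b\in\Wideal_d$.

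\emph{Computing $pa$ explicitly.} Iterating the addition formula gives
\[
(pa)_m=\sum_{m_1+\cdots+m_p=m}\Tr^{\Sigma_m}_{\Sigma_{m_1}\times\cdots\times\Sigma_{m_p}}(a_{m_1}\boxtimes\cdots\boxtimes a_{m_p}).
\]
The cyclic group $C_p$ permutes the compositions $(m_1,\dots,m_p)$. Terms in the same orbit are transfers from conjugate Young subgroups of conjugate classes, so they are equal. A free orbit therefore contributes $p$ times a single term, which is $0$ because $A$ has characteristic $p$ and $E^0B\Sigma_m\otimes_\O A$ is an $A$-module. Only the constant compositions $m_i=m/p$ survive. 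If $a\in\Wideal_d$, then $a_{m/p}=0$ unless $p^d\mid m/p$. It follows that $(pa)_m=0$ for $0<m<p^{d+1}$, so $pa\in\Wideal_{d+1}$, and
\[
(pa)_{p^{d+1}}=\Tr^{\Sigma_{p^{d+1}}}_{\Sigma_{p^d}^{p}}\bigl(a_{p^d}^{\otimes p}\bigr).
\]
Thus $\bar p_d$ is the Frobenius-semilinear map $x\mapsto \Tr(x^{\otimes p})$, read through the identifications $\upsilon_d,\upsilon_{d+1}$ of \eqref{prop:witt-assoc-gr-abelian-gp}.

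\emph{Injectivity (the main obstacle).} Since $A$ is perfect, $(-)^{(p)}$ is an isomorphism and everything is base-changed from $\kappa$ (compare \eqref{prop:witt-quotient-basechange}). It therefore suffices to treat $A=\kappa$, or even a universal case, and to dualize. In degree $p^d$, the generators of $T$ modulo decomposables are $\O$-linearly dual to the primitives. Strickland identifies these as dual to $\O_d=E^0B\Sigma_{p^d}/\trJ_{p^d}$, which is free of rank $N^h_{p^d}$ by \eqref{prop:o-d-free-rank}.

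Under this duality, $\bar p_d$ should be the linear dual of the reduction mod $p$ of $\rho_{d+1}\colon \O_{d+1}\to\O_d^{(p)}$. This is plausible because $\Tr$ is adjoint to $\Res$, and $\rho_{d+1}$ is induced by $\Res$. Injectivity of $\bar p_d$ over $\kappa$ then becomes surjectivity of
\[
\O_{d+1}\otimes_\O\kappa\lra(\O_d\otimes_\O\kappa)^{(p)}.
\]
I would prove this surjectivity geometrically, using Strickland's description $\Spec\O_d=\mathrm{Sub}_{p^d}(\mathbb G)$. Over $\kappa$, where the formal group is $\Gamma$, the map $\rho_{d+1}$ should correspond to sending a subgroup $K\le\Gamma^{(p)}$ of rank $p^d$ to its preimage under Frobenius $\Gamma\to\Gamma^{(p)}$, which is a subgroup of rank $p^{d+1}$. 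This assignment is a closed immersion of subgroup schemes, and a closed immersion gives the required surjection on rings. Making the identification of $\rho_{d+1}$ with this preimage map precise, including checking that the duality pairing is compatible, is where I expect the real work to lie.
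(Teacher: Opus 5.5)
Your route is sound and differs from the paper's, but as written the one step that carries real content is only sketched. The formal parts are correct. Torsion-freeness and both inclusions do follow from injectivity of $\ol p_d$ on the associated graded. The $C_p$-orbit cancellation correctly gives $(pa)_m=\Tr^{\Sigma_m}_{\Sigma_{m/p}^p}(a_{m/p}^{\boxtimes p})$ in characteristic $p$, with $(pa)_m=0$ for $p\nmid m$.

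Identifying $\ol p_d$ with the dual of $\rho_{d+1}\otimes\kappa$ is also right. The adjunction of $\Tr$ and $\Res$ you invoke is the $K(n)$-local trace pairing behind \eqref{prop:T-is-self-dual}. Under that pairing, the elements of $E^0B\Sigma_{p^d}$ whose restrictions to proper Young subgroups vanish form $(\trJ_{p^d})^{\perp}\cong\O_d^\vee$.

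The gap is that the Frobenius-preimage description of $\rho_{d+1}$ is asserted, not proved. The closed-immersion part is fine, since $K\mapsto\Fr^{-1}K$ is a monomorphism of finite $\kappa$-schemes. You can finish more cheaply in either of two ways.
\begin{enumerate}
\item Surjectivity of $\ol\rho_{d+1}$ is already \eqref{prop:rho-t-compat}, which is proved independently of this theorem. However, that imports the surjectivity of $\ol t_d$ from Burklund--Schlank--Yuan \eqref{prop:t-surjective}, which the paper's proof of this theorem avoids.
\item Better, skip the dualization. Your formula says $(pa)_{pk}$ is the $p$th power of $a_k$ in the ring $\bigoplus_m E^0B\Sigma_m\otimes_\O A$ under the transfer product, which is exactly where Witt addition takes place. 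By \eqref{prop:T-is-self-dual} and \eqref{lemma:structure-of-free-T-alg} this is a polynomial ring over $A$, hence reduced since $A$ is perfect. So $(pa)_{pk}=0$ forces $a_k=0$, and both assertions follow at once, with no associated graded and no geometry.
\end{enumerate}

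The paper argues on the other side of the duality. It shows that the image of $[p]\colon\ol T\to\ol T$ is exactly $\ol T^p$ \eqref{prop:image-of-pth-power-on-T-mod-kappa}. It does this by factoring $[p]=F\circ V$, using injectivity of $F$ on a polynomial ring, and transporting $F$/$V$ duality along $\ol T\cong\ol T^\vee$. It then reads off both claims degreewise from representing maps. The reducedness argument uses the same two inputs, self-duality and polynomiality, applied to $\ol T^\vee$: injectivity of Frobenius there is dual to the surjectivity of Verschiebung on $\ol T$ that the paper proves.

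What your formulation adds is the explicit statement that multiplication by $p$ on $\Wideal_d/\Wideal_{d+1}$ is dual to $\ol\rho_{d+1}$. This ties the theorem to the Frobenius-preimage geometry of Strickland's subgroup schemes and to the maps $\ol\rho_d$ of \S\ref{sec:witt-filt-lt-cofreeness}. In the form you wrote it, though, that geometric identification is exactly where the proof is still missing.
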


\begin{cor}
  Under the same hypotheses, we have
  $p^e\Wideal_d(A)=\Wideal_{d+e}(A)\cap p^e\Witt_E(A)$ for all
  $e\geq0$.  
\end{cor}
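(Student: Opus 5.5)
The plan is to prove the corollary by induction on $e$, using only \eqref{thm:witt-filtration-compat-with-p}. For $e=0$ the claim reads $\Wideal_d(A)=\Wideal_d(A)\cap\Witt_E(A)$, which is trivial. For $e=1$ it is exactly the second assertion of \eqref{thm:witt-filtration-compat-with-p}.

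For the inductive step, assume $p^e\Wideal_d(A)=\Wideal_{d+e}(A)\cap p^e\Witt_E(A)$ for all $d$. First the inclusion $p^{e+1}\Wideal_d(A)\subseteq \Wideal_{d+e+1}(A)\cap p^{e+1}\Witt_E(A)$. Given $x\in\Wideal_d(A)$, the inductive hypothesis gives $p^ex\in\Wideal_{d+e}(A)$. Then \eqref{thm:witt-filtration-compat-with-p} applied at level $d+e$ gives $p\cdot p^ex\in\Wideal_{d+e+1}(A)$, since $p\Wideal_{d+e}(A)\subseteq \Wideal_{d+e+1}(A)$.

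For the reverse inclusion, let $y\in \Wideal_{d+e+1}(A)\cap p^{e+1}\Witt_E(A)$, and write $y=p^{e+1}z$.
\begin{itemize}
\item Since $y\in \Wideal_{d+e+1}(A)\cap p\Witt_E(A)$, the theorem at level $d+e$ gives $y=pw$ with $w\in\Wideal_{d+e}(A)$.
\item Since $\Witt_E(A)$ is $p$-torsion free, the equality $pw=p(p^ez)$ forces $w=p^ez$. Hence $w\in\Wideal_{d+e}(A)\cap p^e\Witt_E(A)$.
\item By the inductive hypothesis, $w\in p^e\Wideal_d(A)$, so $y=pw\in p^{e+1}\Wideal_d(A)$.
\end{itemize}

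The only point requiring care is the cancellation of $p$ in the second step. This is exactly where the torsion-freeness half of \eqref{thm:witt-filtration-compat-with-p} is needed. Without it, $w$ would only agree with $p^ez$ modulo $p$-torsion, and the inductive hypothesis could not be applied to $w$. No other obstacle is expected.
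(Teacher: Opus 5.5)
Your proposal is correct and matches the paper's intended argument. The paper states the corollary without proof as an immediate consequence of the theorem, and the natural way to fill that in is exactly your induction on $e$: cancelling $p$ via the $p$-torsion-freeness of $\Witt_E(A)$ is the one point that needs care, and you handle it properly.
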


Recall the $\O$-algebra $T$ which corepresents the functor $\Witt_E$.
As noted earlier (\S\ref{sec:representability-of-witt-e}),  $T\approx
\bigoplus T_m$ with 
$T_m=E_0^\eev B\Sigma_m$.   
Multiplication by $[p]$ in $\Witt_E$ is thus represented by a grading
preserving 
$\O$-algebra map $[p]\colon T\ra T$.

We only care right now about the restriction of $\Witt_E$ to
$\kappa=\O/\mf{m}$-algebras.  Thus we write
$\ol{T}_m=T_m\otimes_{\O}\kappa$ and $\ol{T}=\bigoplus
\ol{T}_m$, and $[p]\colon \ol{T}\ra \ol{T}$ for the induced map.

\begin{prop}\label{prop:image-of-pth-power-on-T-mod-kappa}
  The image of $[p]\colon \ol{T}\ra \ol{T}$ is precisely the
  subring $\ol{T}^p\subset \ol{T}$ of $p$th powers.  
\end{prop}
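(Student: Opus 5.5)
The plan is to compute $[p]$ on $\ol T$ directly from the Hopf-algebra structure that corepresents the ring structure of $\Witt_E$. Write $\Delta\colon \ol T\to\ol T\otimes_\kappa\ol T$ for the coproduct corepresenting addition, and $\mu$ for the multiplication of $\ol T$. Multiplication by $p$ is the $p$-fold sum $a\mapsto a+\cdots+a$, so it is corepresented by
\[
[p]\colon \ol T\xra{\Delta_p}(\ol T^{\otimes p})^{\Sigma_p}\xra{\mu_p}\ol T,
\]
where $\Delta_p$ is the iterated coproduct. Its image lies in the $\Sigma_p$-invariants because $\Delta$ is cocommutative, and $\mu_p$ is the fold map.

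\textbf{Step 1: $[p]$ factors through Frobenius.} On $(\ol T^{\otimes p})^{\Sigma_p}$, split a symmetric tensor into sums over $\Sigma_p$-orbits of monomial tensors. An orbit of size $p$ is sent by $\mu_p$ to $p$ times a product, which is $0$ in characteristic $p$. A fixed tensor $y^{\otimes p}$ is sent to $y^p$. Hence $\mu_p$ kills $\Im N$ and factors as
\[
(\ol T^{\otimes p})^{\Sigma_p}\to \ol T^{[p]}\xleftarrow{\ \gamma,\ \approx\ }\ol T^{(p)}\xra{\ y^{(p)}\mapsto y^p\ }\ol T.
\]
Here $\gamma$ is the isomorphism of the preceding Proposition, which applies since $\ol T$ is free over $\kappa$. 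Thus $[p]=\Fr\circ\delta$, where
\[
\delta\colon\ol T\to\ol T^{(p)}
\]
is a degree-dividing-by-$p$ map of rings (a ``Verschiebung''), and $\Fr$ is the $p$th power map. In particular $\delta$ vanishes on $\ol T_m$ for $p\nmid m$. This already gives $\Im[p]\subseteq \ol T^p$.

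\textbf{Step 2: reduce the reverse inclusion to surjectivity of $\delta$.} Since $\kappa$ is perfect and $\ol T$ is a polynomial ring over $\kappa$ \eqref{lemma:structure-of-free-T-alg}, we have $\ol T^p=\Fr(\ol T^{(p)})$. So it suffices to show that $\delta\colon \ol T_{pk}\to \ol T_k^{(p)}$ is surjective for every $k$. Surjectivity in each degree implies that the image of the ring map $[p]$ contains $y^p$ for every homogeneous $y$. Because the $p$th power map is additive, this image then contains every $f^p$, hence all of $\ol T^p$.

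\textbf{Step 3 (main obstacle): surjectivity of $\delta$.} Dualize using $\ol T_m^\vee\approx E^0B\Sigma_m\otimes_\O\kappa$. The dual of $\Delta$ is the product $x\cdot y=\Tr(x\boxtimes y)$, and the linear dual of $\delta$ is the map
\[
(E^0B\Sigma_k\otimes_\O\kappa)^{[p]}\to E^0B\Sigma_{pk}\otimes_\O\kappa,\qquad x^{[p]}\mapsto \Tr^{\Sigma_{pk}}_{\Sigma_k^{p}}(x^{\boxtimes p}).
\]
This map is the Frobenius of the dual Hopf algebra $\ol T^\vee=\bigoplus_m E^0B\Sigma_m\otimes_\O\kappa$ with its transfer product. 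So surjectivity of $\delta$ is equivalent to injectivity of Frobenius on $\ol T^\vee$, i.e.\ to $\ol T^\vee$ being reduced.

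The double-coset computation $\Res\Tr=\sum_\sigma\sigma+\cdots$ used for $\rho_d$ is useless here, since the main term is $p!$ times something and so vanishes mod $p$. Instead I would try first to import Strickland's structure theory \cite{strickland-morava-e-theory-of-symmetric}: there $\bigoplus_m E^0B\Sigma_m$ with transfer product (modulo the transfer ideals, it is controlled by the $\O_d$) is also shown to be polynomial, or at least free over the subring generated in lower degrees. That would give reducedness after base change to $\kappa$.

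Failing that, I would run a dimension count. By Step 1, $\dim_\kappa\Im(\delta|_{\ol T_{pk}})=\dim_\kappa\Im([p]|_{\ol T_{pk}})$. I would then bound this image from below by exhibiting, degree by degree, enough independent $p$th powers, using the $N^h_{p^d}$ polynomial generators and the nondegenerate pairing, until the dimension equals $\dim\ol T_k$.
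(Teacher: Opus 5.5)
\textbf{There is a genuine gap at the crux.} Your Steps 1 and 2 are correct, and they are the paper's factorization $[p]=F\circ V$ in other notation: your $\delta$ is the Verschiebung. (One harmless slip: orbits of non-constant tuples have size divisible by $p$, not necessarily equal to $p$.) Your dualization in Step 3 is also correct. Surjectivity of $\delta$ is equivalent to injectivity of Frobenius on $\ol{T}^\vee=\bigoplus_m E^0B\Sigma_m\otimes_\O\kappa$ with its transfer product, i.e.\ to $\ol{T}^\vee$ being reduced. But that reducedness is the entire content of the proposition, and your proposal does not establish it.

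The missing idea is the self-duality \eqref{prop:T-is-self-dual}. By $K(n)$-local ambidexterity (1-semiadditivity) there are equivalences $\Sip BG\simeq \mc{F}(\Sip BG,\mb{S}_{K(n)})$ that interchange restriction and transfer. Hence $T\approx T^\vee$ as graded Hopf algebras, and in particular $\ol{T}^\vee\approx\ol{T}$ as rings. Since $\ol{T}$ is polynomial \eqref{lemma:structure-of-free-T-alg}, $\ol{T}^\vee$ is reduced, its Frobenius is injective, and $\delta=F^\vee$ is surjective. This is exactly the paper's argument run in the dual direction. The paper notes that $F$ is injective on $\ol{T}$, so $V=F^\vee$ is surjective on $\ol{T}^\vee$, and then transports this back along $\ol{T}\approx\ol{T}^\vee$.

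Some input of this kind is indispensable: polynomiality of $\ol{T}$ alone cannot give the statement. Take the graded Hopf algebra $\kappa[x]$ with $x$ primitive of degree $1$. There $[p](x)=px=0$, so $\Im[p]=\kappa\neq\kappa[x^p]$, and the dual is a divided power algebra, which is not reduced.

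Your fallbacks do not supply the missing input:
\begin{itemize}
\item Being ``free over the subring generated in lower degrees'' does not imply reducedness.
\item Polynomiality of the transfer-product ring would suffice, and it is true. But the standard route to it is precisely the self-duality above. Offering it as a hedged citation, alongside a weaker alternative that does not work, leaves the key step unproved.
\item The dimension count is circular. Exhibiting enough $p$th powers in $\Im[p]$ is the claim itself. The nondegenerate pairing you mention helps only if it is a Hopf-algebra self-duality of $\ol{T}$, which is again the missing input.
\end{itemize}
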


We will prove this Proposition below.  First, we use it to prove the Theorem.

\begin{proof}[Proof of \eqref{thm:witt-filtration-compat-with-p}]
Given an element
$a\in
\Witt_E(A)$ we also denote by $a\colon
\ol{T}\ra A$ its representing map.  Note that $a$ represents $0\in
\Witt_E(A)$ iff 
$a(\ol{T}_m)=0$ for all $m>0$.

First we show that $\Witt_E(A)$ is $p$-torsion free.  If $pa=0$, i.e.,
if $(a\circ [p])(\ol{T}_m)=0$ for $m>0$, then by
\eqref{prop:image-of-pth-power-on-T-mod-kappa} we 
have that $a((\ol{T}_m)^p)=0$.  But since $A$ is perfect and
$a(x)^p=a(x^p)$, it follows that $a(x)=0$ for all $x\in\ol{T}$ in
positive grading.

Next we show that $p\Wideal_d(A)=\Wideal_{d+1}(A)\cap p\Witt_E(A)$.
Recall that $a\in \Wideal_d(A)$ if and only if $a(\ol{T}_m)=0$
whenever $p^d\nmid m$.

By \eqref{prop:image-of-pth-power-on-T-mod-kappa} we see that for all
$m\geq0$ we have that $[p](\ol{T}_m) = (\ol{T}_{m/p})^p$, where
we write $\ol{T}_{m/p}=0$ when $m/p$ is not an integer.
Thus, for any $m$ we have that  $(pa)(\ol{T}_m)=0$ if and only if
$a((\ol{T}_{m/p})^p)=0$, and so (because $A$ is perfect) if and only
if 
$a(\ol{T}_{m/p})=0$.   The claim follows immediately.
\end{proof}

\begin{rem}
  When the height $h=1$ (e.g., the classical $p$-typical Witt vectors
  of  a perfect $\F_p$-algebra), we have that $\Wideal_1(A)\subseteq
  p\Witt_E(A)$, 
  whence $p\Wideal_d(A)=\Wideal_{d+1}(A)$ for all $d$, and thus
  $\Wideal_d(A)=p^d\Witt_E(A)$.  
\end{rem}

To prove the proposition, we look at the structure of $T$ as a
graded abelian Hopf algebra.  Its multiplication and comultiplication
maps are induced  by the evident map  $B\Sigma_i\times B\Sigma_j\ra
B\Sigma_{i+j}$ and its transfer $\Sip B\Sigma_{i+j}\ra \Sip
B\Sigma_i\otimes \Sip B\Sigma_j$ respectively.  As each
$T_m=E_0^\eev B\Sigma_m$ is a finitely generated free
$\O$-module, we obtain a dual Hopf algebra $T^\vee = \bigoplus
T_m^\vee = \bigoplus_m \Hom_\O(T_m, \O)$.

\begin{prop}\label{prop:T-is-self-dual}
  There is an isomorphism $T\approx T^\vee$ of graded Hopf
  algebras over $\O$.
\end{prop}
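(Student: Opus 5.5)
We plan to construct the isomorphism $T\approx T^\vee$ from an equivariant duality pairing on $E^0B\Sigma_m$, and then check that it carries the product of one Hopf algebra to the product of the other, and likewise for coproducts.

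\emph{Identifying the dual.} Since $T_m=E_0^\vee B\Sigma_m$ is finitely generated free, the Kronecker pairing gives $T_m^\vee=\Hom_\O(E_0^\vee B\Sigma_m,\O)\approx E^0B\Sigma_m$. Under this identification the structure maps of $T^\vee$ are the duals of those of $T$:
\begin{itemize}
\item the product on $T^\vee$ is dual to the transfer coproduct on $T$, so it is the transfer $\Tr_{\Sigma_i\times\Sigma_j}^{\Sigma_{i+j}}$, using the Künneth isomorphism;
\item the coproduct on $T^\vee$ is dual to the product on $T$, so it is the restriction along $\Sigma_i\times\Sigma_j\leq\Sigma_{i+j}$.
\end{itemize}
This matches the ring structure on $\bigoplus_m E^0B\Sigma_m$ used in defining $\Witt_E$.

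\emph{The isomorphism.} We use $K(n)$-local ambidexterity for finite groups (Hopkins–Lurie, or Strickland's duality for $E^0B\Sigma_m$). It provides an $E$-linear self-duality $\Sigma^\infty_+BG\otimes E\simeq D(\Sigma^\infty_+BG)\otimes E$ in $\Sp_{K(n)}$, so $E_0^\vee BG\approx E^0BG$. Equivalently, $E^0BG$ carries the perfect pairing $\langle x,y\rangle=\epsilon(xy)$, where $\epsilon=\Tr_1^G$ followed by the augmentation. Applying this with $G=\Sigma_m$ gives degreewise $\O$-isomorphisms $\theta_m\colon T_m\to T_m^\vee$.

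\emph{Compatibility with the structure maps.} The key property of the ambidexterity (norm) equivalence is naturality for finite coverings. Under it, the map induced by $BH\to BG$ for $H\leq G$ corresponds to the transfer, and the transfer corresponds to restriction. Concretely, Frobenius reciprocity gives
\[
\langle \Res x,y\rangle_H=\langle x,\Tr y\rangle_G .
\]
Apply this to $H=\Sigma_i\times\Sigma_j\leq G=\Sigma_{i+j}$, together with multiplicativity of the pairing under Künneth, $\langle a\boxtimes b,c\boxtimes d\rangle=\langle a,c\rangle\langle b,d\rangle$. Then:
\begin{itemize}
\item $\theta$ sends the product on $T$, which is induced by $B\Sigma_i\times B\Sigma_j\to B\Sigma_{i+j}$, to transfer, i.e. the product on $T^\vee$;
\item $\theta$ sends the coproduct on $T$, the transfer, to restriction, i.e. the coproduct on $T^\vee$.
\end{itemize}
Unit and counit compatibility is immediate in degree $0$.

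\emph{Main obstacle.} The main obstacle is checking that the ambidexterity isomorphism really interchanges pushforward along $BH\to BG$ with the transfer on the nose, with no unit or normalization twist. Such a twist could be a factor like $|G/H|$ or a unit depending on $G$. To handle this, we would normalize by the definition of the $K(n)$-local norm map and its compatibility with composition of finite coverings, as in Hopkins–Lurie. Alternatively, we would verify the reciprocity formula directly from Strickland's description of $E^0B\Sigma_m$ as a Frobenius algebra with trace $\epsilon$, which is perfect by Strickland's duality theorem. Perfectness of the pairing on each $E^0B\Sigma_m$, not only on the quotients $\O_d$, is the input we would cite.
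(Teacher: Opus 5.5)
Your proposal is correct and follows the paper's proof, in more detail. The paper also gets the isomorphism from the $K(n)$-local self-duality $\Sip BG\simeq \mathcal{F}(\Sip BG,\mathbb{S}_{K(n)})$ coming from ambidexterity, which interchanges restriction and transfer (citing Strickland), and you make this concrete via the Frobenius pairing plus Künneth multiplicativity. One correction: the Frobenius form $\epsilon$ should be the $K(n)$-local transfer $\Tr^e_G\colon E^0BG\to\O$ along $G\to e$ (the paper's $\Tr^e_{\Sigma_m}$), not ``$\Tr_1^G$ followed by the augmentation''; with that form your reciprocity identity follows at once from $\Tr^e_H=\Tr^e_G\circ\Tr^G_H$ and the projection formula, so the normalization twist you worry about cannot arise.
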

\begin{proof}
  This is an immediate consequence of ambidexterity, in particular of
  the 1-semiadditivity of $\Sp_{K(n)}$.  In particular, for finite
  groups  $G$ there  are natural $K(n)$-local equivalences $\Sip BG \approx
  \mc{F}(\Sip BG, \mb{S}_{K(n)})$, which interchange restriction and
  transfer maps.  Everything we need can be found in
  \cite{strickland-kn-local-duality-finite-groups}. 
\end{proof}

\begin{proof}[Proof of \eqref{prop:image-of-pth-power-on-T-mod-kappa}]
  From \eqref{prop:T-is-self-dual} we have that $\ol{T}\approx
  \ol{T}^\vee$, an isomorphism of  
  graded Hopf algebras over the perfect field  $\kappa$ (using the
  $\kappa$-linear 
  dual). We also know that $T$, and thus $\ol{T}$, is a polynomial
  ring with homogeneous generators \eqref{lemma:structure-of-free-T-alg}.

  As both $\ol{T}$ and $\ol{T}^\vee$ are both abelian Hopf
  algebras over a perfect field of 
  characteristic $p$, we can factor their ``$p$-th power
  endomorphisms'' $[p]$ as composites of $\kappa$-algebra homomorphisms
  \[
    \ol{T} \xra{V} \phi^*\ol{T} \xra{F} \ol{T}\qquad
    \text{and} \qquad
    \ol{T}^\vee \xra{V} \phi^*\ol{T}^\vee \xra{F} \ol{T}.
  \]
  Here $\phi^*$ denotes basechange along the $p$th power map
  $\phi\colon \kappa\ra \kappa$, and $F$ is the relative $p$-th power
  Frobenius.  Furthermore, duality interchanges $V$ and $F$.

  As $\ol{T}$ is a polynomial ring, $F\colon \phi^*\ol{T}\ra
  \ol{T}$ is injective, whence  $V=F^\vee\colon
  \ol{T}^\vee\ra 
  \phi^*\ol{T}^\vee$ is  surjective.  The isomorphism
  $\ol{T}\approx \ol{T}^\vee$ then implies that
  $V\colon \ol{T}\ra \phi^*\ol{T}$ is surjective, and therefore
  the image of $[p]\colon \ol{T}\ra \ol{T}$ is precisely the
  image of $F$, as desired.
\end{proof}

\section{Witt filtration of Lubin-Tate space and cofreeness}
\label{sec:witt-filt-lt-cofreeness}

There are two ring homomorphisms $s_d,t_d\colon \O\ra
\O_d$  of interest.  They are both defined as composites
\[
\xymatrix{
  {\O} \ar@{=}[r]
  & {E^0(\point)} \ar@<1ex>[r]^-{\pi^*} \ar@<-1ex>[r]_-{P_{p^d}}
  & {E^0B\Sigma_{p^d}} \ar@{->>}[r]
  & {E^0B\Sigma_{p^d}/\trJ_d} \ar@{=}[r]
  & {\O_d}
}
\]
using (for $s_d)$ restriction along $\pi\colon B\Sigma_{p^d}\ra
\point$ or (for $t_d$) 
the power operation $P_{p^d}$.  The map $s_d$ defines the ``usual''
$\O$-algebra structure on $\O_d$ (used as such up to now in this
paper), while $t_d$ is equal to the
composite of $\O\xra{P}\Witt_E(\O)\xra{\gh_d}\O_d$.

\begin{prop}\label{prop:t-surjective}
The map $t_d$ induces a surjective map $\ol{t}_d\colon \O/p\ra
\O_d\otimes_\O \kappa$.  
\end{prop}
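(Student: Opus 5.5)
The plan is to prove surjectivity with a complete-local-ring form of Nakayama's lemma, after reinterpreting $t_d$ geometrically: the map on formal spectra should be a monomorphism of deformation functors, so it is injective on tangent spaces. By \eqref{prop:o-d-free-rank}, $R\defeq\O_d\otimes_\O\kappa$ is a finite-dimensional local $\kappa$-algebra, hence Artinian with nilpotent maximal ideal $\m_R$. The map $\ol t_d\colon \O/p\ra R$ is a local homomorphism, since $t_d$ preserves the maximal ideals and $\O$ is complete. By the standard criterion for a local homomorphism from a complete local ring to an Artinian local ring, it suffices to show two things: $\ol t_d$ induces a surjection on residue fields, and it induces a surjection $\m_\O/(\m_\O^2,p)\ra \m_R/\m_R^2$ on cotangent spaces.

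\emph{Step 1 (residue fields).} I will check that $t_d$ modulo maximal ideals is the map $\kappa\ra\kappa$ given by $c\mapsto c^{p^d}$ (up to the conventions for $P_{p^d}$). This holds because modulo $\m$ the total power operation on $\kappa$-points is the Frobenius twist; one can also read it off from $\upsilon_d([c]a)=c^{p^d}a$ in \eqref{prop:witt-assoc-gr-perfect}. Since $\kappa$ is perfect, this map is bijective.

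\emph{Step 2 (moduli interpretation).} I will use the Strickland and Ando--Hopkins--Strickland description. $\Spf\O_d$ classifies pairs $(G,H)$, where $G$ is a deformation of $\Gamma$ and $H\leq G$ is a subgroup scheme of rank $p^d$. The map $s_d$ remembers $G$, while $t_d$ classifies the quotient deformation $G/H$ of $\Gamma^{(p^d)}$, with the identification of $\Gamma^{(p^d)}$ with $\Gamma$ accounting for the Frobenius twist seen in Step 1. Base-changing along $s_d$ to $\kappa$, $\Spec R$ classifies subgroups $H$ of rank $p^d$ of the constant formal group $\Gamma$ over Artinian local $\kappa$-algebras $B$. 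Thus $\ol t_d$ corresponds to the natural transformation sending $H\leq\Gamma_B$ to the deformation $\Gamma_B/H$ of $\Gamma^{(p^d)}$, whose structure isomorphism comes from $H$ reducing to $\ker \Fr^d$ over the residue field.

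\emph{Step 3 (monomorphism, the main obstacle).} I need this transformation to be injective on $B$-points for every Artinian local $B$ with residue field $\kappa$, and in particular on $B=\kappa[\epsilon]/\epsilon^2$. Suppose $H_1,H_2\leq\Gamma_B$ give isomorphic deformations. Then there are two isogenies $f_i\colon\Gamma_B\ra G$ into a common deformation $G$, both lifting $\Fr^d\colon \Gamma\ra\Gamma^{(p^d)}$ modulo $\m_B$. By rigidity of homomorphisms of formal groups of finite height over nilpotent thickenings (the Lubin--Tate rigidity lemma: a homomorphism between deformations is determined by its reduction), we get $f_1=f_2$, and hence $H_1=\ker f_1=\ker f_2=H_2$. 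The delicate points are, first, that the isomorphism of deformations $\Gamma_B/H_1\approx\Gamma_B/H_2$ is a $\star$-isomorphism compatible with the identifications over $\kappa$, so the composite isogenies really have the same reduction; and second, tracking the Frobenius-twisted $\kappa$-structures so that tangent spaces are compared over the right field. Here Step 1 and perfectness of $\kappa$ let me untwist.

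Given Step 3, the tangent map $\Hom(\m_R/\m_R^2,\kappa)\ra\Hom(\m_\O/(\m_\O^2,p),\kappa)$ is injective, since it is the map on $\kappa[\epsilon]$-points lying over the base point. Dualizing finite-dimensional $\kappa$-vector spaces gives surjectivity on cotangent spaces. Combined with Step 1, Nakayama's lemma (applied inductively along the finite $\m_R$-adic filtration) shows that $\ol t_d$ is surjective.
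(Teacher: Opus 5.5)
Your proof is correct, but it takes a different route from the paper. The paper gives no independent argument. It cites Burklund--Schlank--Yuan (Prop.~3.35), who deduce the statement from their Prop.~3.39: $s_d$ and $t_d$ jointly induce a surjection $\O\wh{\otimes}_{\Witt_p}\O\to\O_d$. Base-changing this along $s_d$ to $\kappa$ yields the proposition, with perfectness of $\kappa$ absorbing the Frobenius twist on scalars as in your Step~1.

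You instead prove only the special-fiber statement, and you prove it directly:
\begin{itemize}
\item Strickland's moduli description (which the paper itself uses in \eqref{prop:witt-trivial-quotient-filtration}) identifies $\ol t_d$ with $H\mapsto \Gamma_B/H$.
\item Rigidity makes this a monomorphism.
\item Injectivity on $\kappa[\epsilon]$-points over the base point gives surjectivity of the $\phi^d$-semilinear cotangent map.
\item Nilpotence of $\m_R$ lets you finish with an elementary filtration argument that needs neither completeness nor finiteness.
\end{itemize}

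Geometrically both routes express the same fact: source and target embed $\operatorname{Spf}\O_d$ in a product of two copies of Lubin--Tate space, i.e.\ a lift of $\Fr^d$ is determined by its source and target. The cited result gives the stronger integral embedding; you establish only its fiber over the closed point of the source factor, which is all that is needed here. In exchange, your argument is self-contained and uses rigidity only in characteristic $p$, where it is a one-line check. A homomorphism $f\colon\Gamma_B\to G$ with coefficients in $\epsilon\kappa$ satisfies $f\circ[p]_\Gamma=[p]_G\circ f=0$. Since $[p]_\Gamma(x)=u(x^{p^h})$ with $u'(0)\neq0$, this forces $f=0$.

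One small correction. Your aside that Step~1 can be read off from $\upsilon_d([c]a)=c^{p^d}a$ does not really apply, since that formula describes the associated graded of the Witt filtration rather than $t_d$. The right justification is the moduli one from your Step~2: the target of a lift of $\Fr^d$ is a deformation of $\Gamma^{(p^d)}$, so its residue-field structure map is $\phi^d$.
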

\begin{proof}
  This is
  \cite{burklund-schlank-yuan-chromatic-nullstellensatz}*{Prop.\
    3.35}, a consequence of the fact that  $s_d$
  and $t_d$ induce a surjective   
  homomorphism $\O\wh{\otimes}_{\mathbb{W}_p} \O\ra \O_{d}$
  \cite{burklund-schlank-yuan-chromatic-nullstellensatz}*{Prop.\
    3.39}.
\end{proof}

Thus we have 
$(\O/p)/\ol{I}_{d+1}\xra{\sim} \O_d\otimes_\O \kappa$,  where we define
$\ol{I}_{d+1}\defeq \ker 
\ol{t_d}$.

Recall the maps $\ol\rho_d\colon \O_d\otimes_\O\kappa \ra
\O_{d-1}\otimes_\O \kappa$ of \S\ref{subsec:ol-rho-d}.
\begin{prop}\label{prop:rho-t-compat}
The maps $\ol\rho_d$ are surjective, and we have $\ol\rho_d\circ
\ol{t}_d= \ol{t}_{d-1}$.   
\end{prop}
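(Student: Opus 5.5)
The plan is to deduce both statements from the compatibility of ghost components with $\rho_d$ (\eqref{cor:rho-compat-with-ghost}), applied with $A=\kappa$ to the element $P(c)\in\Witt_E(\O)$, and then use surjectivity of $\ol t_d$ (\eqref{prop:t-surjective}) to get surjectivity of $\ol\rho_d$.

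\textbf{The identity.} Let $c\in\O$. By construction $t_d(c)=\gh_d(P(c))$, where $P(c)\in\Witt_E(\O)$. Push $P(c)$ forward along $\O\to\kappa$ to an element $\bar a\in\Witt_E(\kappa)$. The ghost maps are natural in $A$, so $\gh_d(\bar a)$ is the image of $t_d(c)$ in $\O_d\otimes_\O\kappa$. Here $\O_d\otimes_\O\kappa$ is formed using the $s_d$-structure, which is the structure used in defining $\gh_d$. Since $\O_d\otimes_\O\kappa$ has characteristic $p$, this image is $\ol t_d(\bar c)$, where $\bar c$ is the class of $c$ in $\O/p$. Now apply \eqref{cor:rho-compat-with-ghost} with $A=\kappa$, which is perfect. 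In that case the two horizontal isomorphisms are the identity, so the corollary says $\ol\rho_d(\gh_d(\bar a))=\gh_{d-1}(\bar a)$. Translating, this gives $\ol\rho_d(\ol t_d(\bar c))=\ol t_{d-1}(\bar c)$ for every $\bar c\in\O/p$.

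\textbf{The one check.} I need that the Frobenius twist does not change which element is named. The bottom map $x\mapsto x^{(p)}$ in the square defining $\rho_d$ is exactly the map inverted in defining $\ol\rho_d$. So the relation $\rho_d\gh_d=(\gh_{d-1})^{(p)}$ becomes $\ol\rho_d\gh_d=\gh_{d-1}$ with no stray $p$th power. This is precisely the content of \eqref{cor:rho-compat-with-ghost}, so nothing new needs to be proved. The map $\ol t_d$ is merely a ring map from $\O/p$, which is not a $\kappa$-algebra in a way compatible with $\ol\rho_d$. This causes no trouble, because the identity is checked elementwise.

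\textbf{Surjectivity.} By \eqref{prop:t-surjective}, $\ol t_{d-1}$ is surjective. Since $\ol t_{d-1}=\ol\rho_d\circ\ol t_d$, the map $\ol\rho_d$ is surjective as well.

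The main obstacle I expect is the bookkeeping of module structures, namely using $s_d$ versus $t_d$ and the $c^{1/p}$-semilinearity of $\ol\rho_d$. Working at the level of elements through $P(c)\in\Witt_E(\O)$ and naturality of $\gh$ in $A$ sidesteps this.
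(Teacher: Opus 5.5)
Your proposal is correct and follows the paper's own proof: write $\ol t_d$ as $\gh_d\circ\Witt_E(\pi)\circ P$ by naturality of ghost components, apply \eqref{cor:rho-compat-with-ghost} with $A=\kappa$, and deduce surjectivity of $\ol\rho_d$ from the factorization $\ol t_{d-1}=\ol\rho_d\circ\ol t_d$. Citing surjectivity of $\ol t_{d-1}$ for the last step is exactly what is needed, and is slightly more precise than the paper's wording.
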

\begin{proof}
  Naturality of ghost components means that $\ol{t}_d$ is equal to the
  composite of
  \[
  \O\xra{P} \Witt_E(\O) \xra{\Witt_E(\pi)} \Witt_E(\kappa) \xra{\gh_d}
  \O_d\otimes_\O \kappa.
\]
The identity $\ol{\rho}_d\circ \ol{t}_d=\ol{t}_{d-1}$ is then
immediate from \eqref{cor:rho-compat-with-ghost}.  Surjectivity of
$\ol\rho_d$ follows from that of $\ol{t}_d$ \eqref{prop:t-surjective}.  
\end{proof}

Write
\[ 
\ol{I}_{d+1} \defeq \ker \ol{t}_d,
\]
an ideal in $\O/p$.   As a consequence of \eqref{prop:t-surjective} and
\eqref{prop:rho-t-compat}, we obtain a descending chain of ideals
\[
\O=\ol{I}_0\supseteq \ol{I}_1\supseteq \ol{I}_2\supseteq \cdots.
\]

\subsection{The trivial-quotient filtration on Lubin-Tate space in
  characteristic $p$} 

Consider an Artinian local ring $A$ of characteristic $p$, whose
residue field $\kappa_A$ is perfect.  This implies that the projection
$A\ra \kappa_A$ has a unique section, and so we can canonically 
identify $\kappa_A$ with a subring of $A$.

Recall that a \dfn{deformation} of $\Gamma/\kappa$ to $A$ is data
$(G,i,f)$, where $G$ is a formal group over $A$, $i\colon \kappa\ra
\kappa_A$ is a ring homomorphism, and $f\colon i^*\Gamma\xra{\sim}
G_0$ is an isomorphism of formal groups over $\kappa_A$, where $G_0$
is the base change of $G$ to the residue field.  

Say that a deformation $(G,i,f)$ of $\Gamma$ to such an $A$ is in 
\dfn{trivial-quotient filtration $\leq d$} if there exists a ring
homomorphism $\wt{i}\colon \kappa\ra \kappa_A\subseteq A$ and an isogeny
$g\colon \wt{i}^*\Gamma\ra G$  of formal groups over $A$ of degree $p^d$,
such that $g_0=f\circ \Fr^d$, where $g_0$ denotes the base change of
$g$ to the residue field, and $\Fr^d$ is the relative $p^d$th power
Frobenius isogeny.  (This implies that $\wt{i}=i\circ \phi^{-d}$.)

\begin{rem}\label{rem:trivial-quotient-filtration-alt}
  We can rephrase this in the following way: $(G,i,f)$ is in
  trivial-quotient filtration $\leq d$ if and only if there exists a
  ring homomorphism $j\colon \kappa \ra \kappa_A\subseteq A$ and an
  isogeny $g\colon j^*\Gamma\ra G$ of formal groups over $A$ of degree
  $p^d$.  The point is that, after basechange to the residue field, we
  have that $g_0\circ h = f\circ \Fr^d$ for an isomorphism $h\colon
  (i\phi^{-d})^* \Gamma \xra{\approx} j^*\Gamma$ of formal groups
  defined over  $\kappa_A\subseteq A$, and so we can replace $j$ by
  $i\phi^{-d}$ and $g$ by $g\circ h$.  
\end{rem}

\begin{prop}\label{prop:witt-trivial-quotient-filtration}
  Let $(G,i,f)$ be a deformation of $\Gamma$ to such a ring $A$, and
  which is classified by a map $\phi\colon \O/p\ra A$.  Then 
  the following are equivalent. 
  \begin{enumerate}
  \item $\phi(\ol{I}_{d+1})=0$.
  \item $(G,i,f)$ is in trivial-quotient filtration $\leq d$.
  \end{enumerate}
\end{prop}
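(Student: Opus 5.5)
Set $\psi\colon \O/p\ra A$ for the classifying map (written $\phi$ in the statement; renamed to avoid a clash with Frobenius). The plan is to read condition (1) through Strickland's moduli interpretation of $\O_d$. Recall \cite{strickland-finite-subgroups-of-formal-groups} that $\O_d$ classifies pairs (deformation $G$ of $\Gamma$, finite subgroup scheme $K\leq G$ of rank $p^d$). Under this identification $s_d$ classifies the source deformation $G$, and $t_d$ classifies the quotient $G/K$ with its induced deformation structure: the residual isomorphism is $f\circ \Fr^d$ up to the Frobenius twist of $i$. The corresponding isogeny is $G\ra G/K$ of degree $p^d$. I will take this identification as given, as in \cite{burklund-schlank-yuan-chromatic-nullstellensatz}*{\S3.4}.

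The key observation is that $\ol t_d\colon \O/p\ra \O_d\otimes_\O\kappa$ classifies the quotient deformation $\Gamma_R/K$, where $R$ is a $\kappa$-algebra and $\Gamma_R$ is the \emph{trivial} deformation of $\Gamma$ over $R$. Indeed, the base change along $s_d$ to $\kappa$ forces the source deformation to be the constant one.

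Now the equivalence. Since $\ol t_d$ is surjective (Proposition~\ref{prop:t-surjective}) with kernel $\ol I_{d+1}$, condition (1) holds iff $\psi$ factors as $\chi\circ\ol t_d$ for some ring map $\chi\colon \O_d\otimes_\O\kappa\ra A$, and such a $\chi$ is then unique. Here $\chi$ need not be $\kappa$-linear, since $\ol t_d$ is not.

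For (1)$\Rightarrow$(2): a map $\chi$ out of $\O_d\otimes_\O\kappa=\O_d\otimes_{\O,s_d}\kappa$ amounts to a ring map $j\colon\kappa\ra A$ (restriction to the $\kappa$ factor, landing in $\kappa_A$ since $\kappa$ is perfect) together with a subgroup $K\leq j^*\Gamma$ of rank $p^d$ over $A$. Since $\psi=\chi\ol t_d$, the deformation $(G,i,f)$ is isomorphic to $j^*\Gamma/K$. The quotient map $j^*\Gamma\ra G$ is then an isogeny of degree $p^d$, so Remark~\ref{rem:trivial-quotient-filtration-alt} gives trivial-quotient filtration $\le d$.

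For (2)$\Rightarrow$(1): given $j$ and $g\colon j^*\Gamma\ra G$ of degree $p^d$, set $K=\ker g$, a finite subgroup of rank $p^d$. Then $(j,K)$ defines $\chi\colon \O_d\otimes_\O\kappa\ra A$, and $\chi\ol t_d$ classifies $j^*\Gamma/K\cong G$. Here the deformation structure must be matched, using $g_0=f\circ\Fr^d$ after the adjustment in the Remark. By universality of $\O/p$ we get $\chi\ol t_d=\psi$, hence $\psi(\ol I_{d+1})=0$.

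The main obstacle is the first step: precisely identifying $t_d$, which is defined via the power operation $P_{p^d}$ modulo transfers, with the quotient-deformation map. This includes tracking the $i\mapsto i\phi^{-d}$ twist, which mirrors the semilinearity $\ol\rho_d(cx)=c^{1/p}\ol\rho_d(x)$. I would cite Ando--Hopkins--Strickland \cite{ando-hopkins-strickland-h-infinity} and Strickland for this rather than reprove it, then check carefully that the isomorphism $f$ transported through the quotient agrees with $f\circ\Fr^d$.
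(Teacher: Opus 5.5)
Your proposal is correct and takes the same route as the paper. The paper's proof likewise rests on the universal property of $\O_d$: it carries the universal degree-$p^d$ isogeny $G'\to G$ with $g_0=\Fr^d$, whose source and target are classified by $s_d$ and $t_d$. It combines this with $(\O/p)/\ol{I}_{d+1}\approx \O_d\otimes_\O\kappa$ via $\ol{t}_d$ and calls the result immediate; you have simply written out both directions, including the Frobenius-twist bookkeeping through Remark~\ref{rem:trivial-quotient-filtration-alt}.
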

\begin{proof}
We recall the universal property of $\O_d$: it carries the universal
example of an isogeny $g\colon G'\ra G$ of degree $p^d$ between
deformations $(G',i',f'), (G,i,f)$ such that $g_0=\Fr^d$, and so that
$(G',i',f')$ and $(G,i,f)$ are classified by $s_d$ and $t_d$
respectively 
(\cite{strickland-finite-subgroups-of-formal-groups},
\cite{strickland-morava-e-theory-of-symmetric}, see also
\cite{rezk-congruence-condition}*{\S12}).

Since $t_d$ factors as $\O\twoheadrightarrow (\O/p)/\ol{I}_{d+1}\approx
\O_d\otimes_\O \kappa$, the claim is immediate.
\end{proof}

\begin{prop}\label{prop:iso-t-tower}
  The maps $\ol{t}_d$ assemble to give an isomorphism of rings $\ol{t}\colon
  \O/p\xra{\sim} \lim \O_d\otimes_\O \kappa$.
\end{prop}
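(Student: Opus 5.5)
The plan is to prove that the two descending filtrations of the complete Noetherian local ring $\O/p\approx\kappa\powser{u_1,\dots,u_{h-1}}$ are cofinal: the chain $\{\ol{I}_d\}$ and the $\m$-adic filtration $\{\m^k\}$. By \eqref{prop:t-surjective} and \eqref{prop:rho-t-compat}, the inverse system $\{\O_d\otimes_\O\kappa\}$ is identified with the system of quotients $\{(\O/p)/\ol{I}_{d+1}\}$ along the natural projections. Hence $\ol{t}$ is the canonical map $\O/p\ra\lim_d(\O/p)/\ol{I}_{d+1}$. Once cofinality is known, this limit equals $\lim_k(\O/p)/\m^k$, and that is $\O/p$ because $\O/p$ is $\m$-adically complete.

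\emph{First inclusion.} Each $\ol{I}_{d+1}$ contains a power of $\m$. By \eqref{prop:o-d-free-rank}, $(\O/p)/\ol{I}_{d+1}\approx\O_d\otimes_\O\kappa$ is a finite-dimensional $\kappa$-vector space, of dimension $N^h_{p^d}$. A finite-dimensional quotient of a Noetherian local ring with residue field $\kappa$ is Artinian, so $\ol{I}_{d+1}$ is $\m$-primary. Therefore $\m^{k}\subseteq\ol{I}_{d+1}$ for some $k$.

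\emph{Second inclusion.} For each $k$ we need some $d$ with $\ol{I}_{d+1}\subseteq\m^k$. Here I use the characterization \eqref{prop:witt-trivial-quotient-filtration}, in the form of \eqref{rem:trivial-quotient-filtration-alt}. Set $A=(\O/p)/\m^k$, an Artinian local ring of characteristic $p$ with perfect residue field $\kappa$, and let $(G,i,f)$ be the universal deformation restricted to $A$. It suffices to show that $(G,i,f)$ is in trivial-quotient filtration $\leq d$ for some $d$; then the classifying map $\O/p\ra A$ kills $\ol{I}_{d+1}$, which says exactly that $\ol{I}_{d+1}\subseteq\m^k$. To find $d$, choose $e$ with $p^e\geq k$. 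Then the $p^e$-th power map $\phi^e_A\colon A\ra A$ kills $\m_A$, so it factors as $A\ra\kappa_A\subseteq A$. It follows that $G^{(p^e)}=(\phi^e_A)^*G$ is the base change of the special fiber $G_0\approx i^*\Gamma$ along a homomorphism $\kappa_A\ra A$. So $G^{(p^e)}\approx j^*\Gamma$ for a suitable ring map $j\colon\kappa\ra\kappa_A\subseteq A$. The Verschiebung $V^e\colon G^{(p^e)}\ra G$ is an isogeny over $A$ of degree $p^{e(h-1)}$, because $F^eV^e=[p^e]$ has degree $p^{eh}$ and $F^e$ has degree $p^e$. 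By \eqref{rem:trivial-quotient-filtration-alt}, $(G,i,f)$ is in trivial-quotient filtration $\leq d$ with $d=e(h-1)$.

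The step needing the most care is this Verschiebung argument. One must check that the remark applies with an isogeny of degree exactly $p^d$ out of a base change of $\Gamma$ along a map $\kappa\ra\kappa_A$. This includes the degenerate case $h=1$, where $d=0$ and $V^e$ is an isomorphism, consistent with $\O/p=\kappa$. One must also check that the identification $G^{(p^e)}\approx j^*\Gamma$ really holds over all of $A$, not just over $\kappa_A$; it does, since the pullback along $\phi^e_A$ only depends on $G\otimes_A\kappa_A$. Given both inclusions, the filtrations are cofinal, so the limits agree and $\ol{t}$ is an isomorphism of rings. In particular $\bigcap_d\ol{I}_d=0$.
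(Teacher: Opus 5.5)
Your proof is correct. Its skeleton matches the paper's:
\begin{itemize}
\item identify the tower with $\{(\O/p)/\ol{I}_{d+1}\}$;
\item get $\ol\m^k\subseteq\ol{I}_{d+1}$ from finite length;
\item reduce the reverse inclusion, via \eqref{prop:witt-trivial-quotient-filtration} and \eqref{rem:trivial-quotient-filtration-alt}, to showing that the universal deformation over $A=(\O/p)/\ol\m^k$ has trivial-quotient filtration $\leq d$ for some $d$.
\end{itemize}

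Where you differ is in how you produce the isogeny $j^*\Gamma\to G$. The paper cites Drinfeld's rigidity argument to lift the isomorphism $f$ from $\kappa$ to the nilpotent thickening $A$ up to a power of $p$. This gives $g\colon i^*\Gamma\to G$ with $g_0=f\circ p^k$, of degree $p^{kh}$.

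You instead observe that once $p^e\geq k$, the absolute Frobenius $\phi_A^e$ kills $\ol\m_A$ and so factors through $\kappa_A\subseteq A$. Hence $G^{(p^e)}$ is already a constant group $j^*\Gamma$ over all of $A$. You then take the iterated Verschiebung $V^e\colon G^{(p^e)}\to G$, of degree $p^{e(h-1)}$.

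This route is more self-contained. It needs only two standard facts. First, $V$ exists over any $\F_p$-algebra: a homomorphism with vanishing linear term, such as $[p]_G$, is a power series in $x^p$. Second, Weierstrass degree is multiplicative, applied to $V^eF^e=[p^e]_G$ over the local ring $A$. Both arguments still use \eqref{rem:trivial-quotient-filtration-alt} to absorb a residual automorphism on the special fiber.

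Your route also gives an explicit bound, $\ol{I}_{e(h-1)+1}\subseteq\ol\m^{p^e}$. At height $2$ this is consistent with $\O/(p,I_{d+1})\approx\kappa\powser{a}/(a^{N^2_{p^d}})$. In characteristic $p$, a Drinfeld-style lift of $p^e f$ is essentially $V^e\circ(\text{iso})\circ F^e$, so your isogeny is that one with the unneeded $F^e$ removed. That removal is why your degree $p^{e(h-1)}$ beats $p^{eh}$.

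The paper's citation works for any nilpotent thickening in which $p$ is nilpotent. That generality is not needed here, since $A$ has characteristic $p$ by construction.
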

\begin{proof}
  We have identified the inverse system $\{\O_d\otimes_\O\kappa\}$ 
  with $\{(\O/p)/\ol{I}_{d+1}\}$.  Write $\ol{\m}\subseteq \O/p \approx
  \kappa\powser{u_1,\dots, u_{h-1}}$ for the maximal ideal.

  The ring $(\O/p)/\ol{I}_{d+1}\approx \O_d\otimes_\O\kappa$ is
  artinian, and of finite length as a $\kappa$-module
  \eqref{prop:o-d-free-rank}.   Thus
  for any $d$ there exists $k$ such that $\ol\m^k \subseteq
  \ol{I}_{d+1}$.  It remains to show that for any $k$ there exists $d$
  such that $\ol{I}_{d+1}\subseteq \ol\m^k$.

  Thus, it suffices 
to show that for all factorizations $\kappa\xra{i}
  A\xra{\pi} \kappa$ of $\id_\kappa$ with $A$ an Artinian local ring,
  that for any $\kappa$-algebra map $\phi\colon \O/p\ra $ there exists
  $d\geq1$ such that $\phi(\ol{I}_{d+1})=0$.  (That is, take
  $A=(\O/p)/\ol\m^k$.)

  Recall that $\phi$ classifies a deformation $(G,\id,f)$ of $\Gamma/\kappa$
  to $A$, with $f\colon \Gamma\xra{\approx} G_0$ an isomorphism of
  formal groups over  $\kappa$.
  By the hypotheses on $A$, $p=0$ and $\ker{\pi}\subseteq A$ is a
  nilpotent ideal, and so by the argument of
\cite{drinfeld-coverings-of-p-adic-symmetric-domains}*{App.}, we can
lift the isomorphism $f$ to $A$ up to a  
  power of $p$.  That is, there exists $k\geq0$ and an isogeny of
  formal groups $g\colon i^*\Gamma\ra G$ over $A$ such that
  $g_0=f_0\circ p^k$ (so that $g$ has degree $p^{kh}$).

  In view of \eqref{rem:trivial-quotient-filtration-alt}, this means
  that $(G,\id,f)$ is in trivial-quotient filtration $\leq kh$, i.e.,
  $\phi(\ol{I}_{kh+1})=0$, as desired.
\end{proof}

\subsection{Proof of cofreeness}

\begin{prop}\label{prop:witt-quotient-p-exact-sequence} 
  For any perfect $\kappa$-algebra $A$ and $d\geq0$ we have a short exact sequence
  \[
    0 \ra \Witt_E(A)/\Wideal_d \xra{p} \Witt_E(A)/\Wideal_{d+1}
    \xra{\gh_d} \O_d\otimes_\O A \ra 0.
    \]
  \end{prop}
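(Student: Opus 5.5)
Three things need proof: the first map is well defined and injective, the composite of the two maps is zero, and the sequence is exact in the middle and on the right. Exactness on the left and the zero composite are formal; the middle and right-hand exactness come from a length count.

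\emph{Left exactness.} Multiplication by $p$ sends $\Wideal_d(A)$ into $\Wideal_{d+1}(A)$ by \eqref{thm:witt-filtration-compat-with-p}, so $\Witt_E(A)/\Wideal_d\xra{p}\Witt_E(A)/\Wideal_{d+1}$ is well defined. It is injective: if $pa\in\Wideal_{d+1}$, then $pa\in \Wideal_{d+1}\cap p\Witt_E(A)=p\Wideal_d$, so $pa=pb$ with $b\in\Wideal_d$. Since $\Witt_E(A)$ is $p$-torsion free, $a=b\in\Wideal_d$. Both facts are from \eqref{thm:witt-filtration-compat-with-p}.

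\emph{Zero composite.} The ghost map $\ol\gh_d$ is defined on $\Witt_E(A)/\Wideal_{d+1}$, and it kills $p$ because its target $\O_d\otimes_\O A$ has characteristic $p$. So the composite is zero.

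\emph{Middle and right exactness.} I would reduce to a length or rank count, using base change \eqref{prop:witt-quotient-basechange}. By that result, $\Witt_E(A)/\Wideal_{d+1}\approx \Witt_p(A)\otimes_{\Witt_p\kappa}\Witt_E(\kappa)/\Wideal_{d+1}(\kappa)$. The ghost map is compatible with this base change: $\gh_d$ on $\Witt_E(A)$ factors through $\Witt_p(A)\otimes(\dots)$, with $\Witt_p(A)$ acting on $\O_d\otimes_\O A$ through $\Witt_p(A)\to A$ twisted by a Frobenius power. Since $A$ is perfect this twist is an automorphism and is harmless. As $\Witt_p(A)$ is flat over $\Witt_p(\kappa)$, the whole sequence for $A$ is the base change of the sequence for $A=\kappa$, so it suffices to treat $A=\kappa$. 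Verifying this compatibility is routine but needs care, since $\gh_d$ is not an $\O$-algebra map.

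\emph{The case $A=\kappa$.} Here the modules $\Witt_E(\kappa)/\Wideal_d$ have finite length over $\Witt_p\kappa$. By \eqref{prop:witt-assoc-gr-perfect}, $\length\Witt_E(\kappa)/\Wideal_{d}=\sum_{e<d}N^h_{p^e}$, and by \eqref{prop:o-d-free-rank}, $\length \O_d\otimes_\O\kappa=N^h_{p^d}$. So the length of the right-hand term equals the difference of the lengths of the other two. Given injectivity on the left, exactness at the middle and surjectivity on the right are then equivalent to surjectivity of $\ol\gh_d$ alone. For surjectivity: by \eqref{prop:t-surjective}, $\ol t_d\colon\O/p\to\O_d\otimes_\O\kappa$ is onto, and $\ol t_d=\gh_d\circ\Witt_E(\pi)\circ P$ (as in the proof of \eqref{prop:rho-t-compat}). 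Hence $\gh_d\colon\Witt_E(\kappa)\to\O_d\otimes_\O\kappa$ is onto. Exactness for $A=\kappa$ then follows by counting lengths.

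Surjectivity for general $A$ also follows directly from the factorization $(\O_d\otimes_\O\kappa)\otimes_\kappa A\approx\O_d\otimes_\O A$, because the image of $\gh_d$ is an $A$-submodule via the multiplicative section $[c]$, up to a Frobenius twist. So the main obstacle is really the base-change/flatness reduction for the middle exactness. If that proves awkward, the alternative is to argue directly with associated gradeds: filter the sequence by $\Wideal_e$ and use the $A$-module isomorphisms of \eqref{prop:witt-assoc-gr-perfect} together with the fact that the counts match. This replaces lengths by a comparison of free $A$-modules of equal finite rank, where a surjection is an isomorphism.
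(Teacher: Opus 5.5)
Your proposal is correct and follows essentially the same route as the paper. Injectivity comes from \eqref{thm:witt-filtration-compat-with-p}, you reduce to $A=\kappa$ via the base-change isomorphism \eqref{prop:witt-quotient-basechange}, surjectivity of $\gh_d$ comes from that of $\ol t_d$ \eqref{prop:t-surjective}, and the length count uses \eqref{prop:witt-assoc-gr-perfect} and \eqref{prop:o-d-free-rank}. You also spell out details the paper leaves implicit: the vanishing of the composite, the flatness of $\Witt_p(A)$ over $\Witt_p\kappa$, and the Frobenius-semilinearity of $\gh_d$ in the base-change step.
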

  \begin{proof}
    
    By \eqref{thm:witt-filtration-compat-with-p} the first map in the
    sequence is injective.  Thus we 
    must show that the induced map $\phi\colon
    \Witt_E(\kappa)/(p,\Wideal_{d+1})\ra 
    \O_d\otimes_\O A$ is an isomorphism.       Using
    \eqref{prop:witt-quotient-basechange}, we reduce to the 
    case of $A=\kappa$. 
 
    When $A=\kappa$, the composite $\O\ra
    \Witt_E(\kappa)/(p,\Wideal_{d+1})\xra{\phi} \O_d\otimes_\O \kappa$
    is surjective \eqref{prop:t-surjective} whence $\phi$ is
    surjective.  We also have that
    $\length_\kappa\Witt_E(\kappa)/\Wideal_d=N_1^h+\cdots + N^h_{p^{d-1}}$
    \eqref{prop:witt-assoc-gr-perfect}, 
    whence $\length_\kappa\Witt_E(\kappa)/(p,\Wideal_{d+1})=N^h_{p^d}$.
    Since also $\length_\kappa \O_d\otimes_\O \kappa =\dim_\kappa
    \O_d\otimes_\O \kappa= N^h_{p^d}$ \eqref{prop:o-d-free-rank}, the
    map $\phi$ is also injective. 
  \end{proof}

  \begin{cor}\label{cor:witt-mod-p-sequence}
    For any perfect $\kappa$-algebra $A$, we have a short exact sequence
    \[
      0\ra \Witt_E(A) \xra{p} \Witt_E(A) \xra{\gh_\infty} \llim
      \O_d\otimes_\O A\ra 0.
      \]
\end{cor}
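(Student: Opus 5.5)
The plan is to pass to the inverse limit over $d$ in the short exact sequences of \eqref{prop:witt-quotient-p-exact-sequence}.

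First I would check that these sequences are compatible as $d$ varies. Both $\Witt_E(A)/\Wideal_d\xra{p}\Witt_E(A)/\Wideal_{d+1}$ and the projections $\Witt_E(A)/\Wideal_{d+1}\ra \Witt_E(A)/\Wideal_d$ are induced from $\Witt_E(A)$, so the first two terms form a morphism of towers. On the third term the transition maps are the $\ol\rho_d\otimes\id_A$ (under the identification $\O_d\otimes_\O A\approx(\O_d\otimes_\O\kappa)\otimes_\kappa A$). The square relating $\gh_d$ and $\gh_{d-1}$ commutes by \eqref{cor:rho-compat-with-ghost}. This gives a short exact sequence of towers
\[
0\ra \{\Witt_E(A)/\Wideal_d\}\xra{p}\{\Witt_E(A)/\Wideal_{d+1}\}\ra\{\O_d\otimes_\O A\}\ra 0 .
\]

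Next I would take limits. By \eqref{prop:witt-filtration-basic}, $\lim_d\Witt_E(A)/\Wideal_d\approx\Witt_E(A)$ for both of the first two towers; the index shift in the second does not change the limit. The limit of the map $p$ is therefore multiplication by $p$ on $\Witt_E(A)$, and the limit of the $\gh_d$ is $\gh_\infty$. Left exactness of $\lim$ then gives exactness at the first two spots: $p$ is injective, which is also \eqref{thm:witt-filtration-compat-with-p}, and $\ker\gh_\infty=p\Witt_E(A)$.

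The main obstacle is surjectivity of $\gh_\infty$. It follows from $\lim^1_d\Witt_E(A)/\Wideal_d=0$, which holds because the tower $\{\Witt_E(A)/\Wideal_d\}$ has surjective transition maps and so is Mittag-Leffler. Concretely, given a compatible family $(x_d)$ in $\lim\O_d\otimes_\O A$, I would lift inductively. Choose $w_0$ with $\gh_0(w_0)=x_0$. Given $w_d\in\Witt_E(A)/\Wideal_{d+1}$ with $\gh_d(w_d)=x_d$, pick any lift $w'$ to $\Witt_E(A)/\Wideal_{d+2}$ with $\gh_{d+1}(w')=x_{d+1}$. Its image in level $d+1$ has the same $\gh_d$ as $w_d$, by compatibility of the $x$'s and of the ghost maps. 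So the difference lies in $p\cdot\Witt_E(A)/\Wideal_d$ and lifts to $p\cdot\Witt_E(A)/\Wideal_{d+1}$. Correcting $w'$ by that $p$-multiple gives a compatible lift, and the resulting element of $\lim=\Witt_E(A)$ maps to $(x_d)$.
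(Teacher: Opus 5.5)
Your proposal is correct and is essentially the paper's argument: take the inverse limit of the short exact sequences of \eqref{prop:witt-quotient-p-exact-sequence}, use $\Witt_E(A)\xrightarrow{\sim}\lim_d \Witt_E(A)/\Wideal_d$, and use $\lim^1_d \Witt_E(A)/\Wideal_d=0$, which holds because the transition maps are surjective. You also check explicitly that the sequences form a morphism of towers via \eqref{cor:rho-compat-with-ghost}, and you give a hands-on inductive lifting argument for surjectivity; both are details the paper leaves implicit, and both are carried out correctly.
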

\begin{proof}
  Take the limit of the exact sequences of
  \eqref{prop:witt-quotient-p-exact-sequence}, and the fact 
  that, for any $\O$-algebra $A$, we have $\Witt_E(A)\xra{\sim} \llim
  \Witt_E(A)/\Wideal_d$ and $\llim^1 \Witt_E(A)/\Wideal_d=0$.
\end{proof}

\begin{lemma}\label{lemma:ab-group-thing}
  Let $f\colon M\ra N$ be a homomorphism of abelian groups which
  induces an isomorphism $M/p\xra{\sim} N/p$.  If $M\xra{\sim}
  \llim M/p^k$ and $\bigcap p^kM=0$, and $N$ is
  $p$-torsion free and $\bigcap p^kN=0$, then $f$ is an isomorphism.
\end{lemma}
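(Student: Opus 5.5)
We prove injectivity and surjectivity separately, each by bootstrapping the mod $p$ isomorphism up to the finite quotients $M/p^k\to N/p^k$. Injectivity uses the hypotheses on $N$; surjectivity uses completeness of $M$.

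\emph{Injectivity.} Let $x\in \ker f$. We show by induction on $k$ that $x\in p^kM$. The case $k=0$ is trivial. Suppose $x=p^ky$ with $y\in M$. Then $p^kf(y)=f(x)=0$. Since $N$ is $p$-torsion free, $f(y)=0$, so $f(y)\in pN$. Injectivity of $M/p\to N/p$ then gives $y\in pM$, hence $x\in p^{k+1}M$. Therefore $x\in\bigcap_k p^kM=0$.

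\emph{Surjectivity.} The main step is to show that $M/p^k\to N/p^k$ is surjective for every $k$. We argue by induction using the exact sequences
\[
N/p^{k}\xra{p} N/p^{k+1}\ra N/p\ra 0
\]
and their analogues for $M$. Given $n\in N$, choose $m_0\in M$ with $f(m_0)\equiv n \pmod p$. Then $n-f(m_0)=pn_1$ with $n_1\in N$. Now apply the same step to $n_1$, and iterate.

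This iteration produces a sequence $m_0,m_1,m_2,\dots$ in $M$ with
\[
n-f\Bigl(\sum_{j<k}p^jm_j\Bigr)\in p^kN \quad\text{for all } k.
\]
The partial sums $s_k=\sum_{j<k}p^jm_j$ are compatible modulo $p^k$. Since $M\xra{\sim}\llim M/p^k$, they define an element $m\in M$ with $m\equiv s_k \pmod{p^kM}$ for every $k$. Then
\[
n-f(m)=\bigl(n-f(s_k)\bigr)+f(s_k-m)\in p^kN+f(p^kM)\subseteq p^kN
\]
for every $k$. Hence $n-f(m)\in\bigcap_k p^kN=0$, so $n=f(m)$.

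The only subtle point is this limiting step. We must use that $f$ maps $p^kM$ into $p^kN$, which holds because $f$ is additive, so that the error terms land in $\bigcap_k p^kN$. The hypothesis $\bigcap p^kM=0$ is then needed only to make the limit element well defined; in fact it is implied by $M\xra{\sim}\llim M/p^k$. No $\lim^1$ issue arises, because we construct the compatible sequence explicitly rather than passing to a limit of exact sequences.
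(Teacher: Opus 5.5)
Your proof is correct and is the standard argument the paper leaves as ``straightforward'': injectivity by induction, using that $N$ is $p$-torsion free and $\bigcap_k p^kM=0$, and surjectivity by successive approximation, using surjectivity of $M\to\lim M/p^k$ and $\bigcap_k p^kN=0$. One small correction to your closing remark: you use $\bigcap_k p^kM=0$ in the injectivity step, not in surjectivity, where only the existence of the limit element matters; as you note, this hypothesis already follows from $M\xrightarrow{\sim}\lim M/p^k$, so nothing breaks.
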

\begin{proof}
  Straightforward.
\end{proof}

We can now deduce the cofreeness of the Lubin-Tate ring.
\begin{proof}[Proof of \eqref{thm:cofreeness-of-O}]
  Apply the lemma \eqref{lemma:ab-group-thing} to the map $\O\ra
  \Witt_E(\kappa)$, using \eqref{prop:iso-t-tower} and
  \eqref{cor:witt-mod-p-sequence}.  The hypothesis 
  that $\bigcap p^k\Witt_E(\kappa)=0$ is
  \eqref{prop:witt-filtration-basic}. 
\end{proof}

\subsection{The Witt filtration of the Lubin-Tate ring at small
  heights}

Given that $\O\approx \Witt_E(\kappa)$, we write $I_d\subseteq \O$ for
the Witt ideal corresponding to $\Wideal_d(\kappa)\subseteq \Witt_E(\kappa)$.  

\begin{exam}[Height 1]
  When $h=1$, we have that $\O=\Witt_p(\kappa)$, and in this case the
  Witt filtration is just the $p$-adic filtration: $I_d=p^d\O$.  Note
  that $\O/(p,I_{d+1})\approx \O_d\otimes_\O \kappa\approx  \kappa$ for all
  $d\geq 0$. 
\end{exam}

When the height $h=2$, we have that $\O\approx
\Witt_p\kappa\powser{a}$, and furthermore $\O/(p,I_{d+1})\approx
\O_d\otimes_\O\kappa \approx \kappa\powser{a}/(a^{n_d})$ where
$n_d=N_{p^d}^2=1+p+\cdots+ p^{d-1}$ (as in
\S\ref{subsec:conventions}). 
\begin{prop}
Fix a formal group of height 2 over $\kappa$, and identify $\O\approx
\Witt_p\kappa\powser{a}$.  Then there exist polynomials
$f_d\in \Witt_p\kappa [a]$ for $d\geq0$ such that
\begin{enumerate}
\item $f_1=a$,
\item $f_d$ is monic of degree $n_d$, 
\item $f_d \equiv a^{n_d} \pmod{p}$, 
\item $O/I_d \approx \O/(p^d,  p^{d-1}f_1,\dots, f_{d-1})$ for
  all $d\geq0$.  
\end{enumerate}
\end{prop}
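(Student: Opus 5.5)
We build the $f_d$ inductively and prove (4) at the same time. The facts we use are the identification $\O/(p,I_{d+1})\approx \O_d\otimes_\O\kappa\approx\kappa\powser{a}/(a^{n_d})$ (from \eqref{prop:witt-quotient-p-exact-sequence} with $A=\kappa$, together with cofreeness), and the relation $pI_d = I_{d+1}\cap p\O$ (the corollary to \eqref{thm:witt-filtration-compat-with-p} transported along $\O\approx\Witt_E(\kappa)$). We read $n_0=0$, $f_0=1$, $n_1=1$, so that $\O/(p,I_1)\approx\kappa$ and $\O/(p,I_2)\approx\kappa\powser{a}/(a)$.

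\emph{Construction of $f_d$.} The ideal $\bar I_{d+1}=(I_{d+1}+p\O)/p$ in $\O/p\approx\kappa\powser{a}$ equals $(a^{n_d})$. So $I_{d+1}$ contains an element $g\equiv a^{n_d}\cdot(\text{unit})\pmod p$. By Weierstrass preparation in the complete local ring $\Witt_p\kappa\powser{a}$, $g=u f_d$ with $u$ a unit and $f_d$ a distinguished polynomial of degree $n_d$. Then $f_d$ is monic and $f_d\equiv a^{n_d}\pmod p$, and $f_d\in I_{d+1}$. For $d=1$ the unique possibility is $f_1=a$ (up to normalizing the coordinate $a$, as the paper does). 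This gives (1)--(3), together with $f_d\in I_{d+1}$.

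\emph{Proof of (4) by induction.} We claim
\[
I_d = (p^d,\, p^{d-1}f_1,\, p^{d-2}f_2,\,\dots,\, f_{d-1})\eqqcolon J_d .
\]
(The tuple in the statement should be read with these $p$-power weights.)

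For $J_d\subseteq I_d$: $f_k\in I_{k+1}$, and repeated use of $pI_e\subseteq I_{e+1}$ gives $p^{d-1-k}f_k\in I_d$. Also $p^d = p^d\cdot f_0\in I_d$.

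For $I_d\subseteq J_d$, induct on $d$. Let $x\in I_{d+1}$. Modulo $p$, $x$ lies in $(a^{n_d})=(\bar f_d)$, so $x-yf_d\in p\O$ for some $y\in\O$. Since $f_d\in I_{d+1}$, we have $x-yf_d\in I_{d+1}\cap p\O=pI_d$. By induction $pI_d=pJ_d$, and $pJ_d=(p^{d+1},p^df_1,\dots,pf_{d-1})$. Hence $x\in J_{d+1}$.

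The base case is $I_0=\O=J_0$, where $J_0$ contains $p^0=1$.

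\emph{Main obstacle.} The step needing care is the intersection identity $I_{d+1}\cap p\O=pI_d$ on $\O$ itself. It comes from \eqref{thm:witt-filtration-compat-with-p} applied with $A=\kappa$, and it requires the identification of the Witt filtration on $\O$ with $\Wideal_d(\kappa)$ via cofreeness. We also need to check that $\O/(p,I_{d+1})$ is exactly $\kappa\powser{a}/(a^{n_d})$ under the chosen coordinate, i.e.\ that $\O_d\otimes_\O\kappa$ is a quotient of $\kappa\powser{a}$ by a power of the maximal ideal. This uses that $\bar t_d$ is a surjection of $\kappa\powser{a}$ onto a local ring of length $n_d$, and every ideal of $\kappa\powser{a}$ has the form $(a^k)$.
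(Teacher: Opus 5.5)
Your argument is the paper's argument. You lift a generator of the mod-$p$ image of a Witt ideal to a monic polynomial $f$, then use $I_{d+1}\cap p\O=pI_d$ to get $I_{d+1}=pI_d+(f)$, and unwind the recursion. Your appeal to Weierstrass preparation is a useful addition: it justifies the monicity that the paper simply asserts.

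However, you have inherited an off-by-one slip from the paper's text, and as written several of your specific claims are false. The equality $N^2_{p^d}=1+p+\cdots+p^{d-1}$ is wrong: in fact $N^2_{p^d}=1+p+\cdots+p^{d}$ (already $N^2_p=p+1$). So by \eqref{prop:o-d-free-rank} the length of $\O/(p,I_{d+1})\approx\O_d\otimes_\O\kappa$ is $n_{d+1}$, not $n_d$. Equivalently, $\O/(p,I_d)\approx\kappa\powser{a}/(a^{n_d})$ with $n_d=1+\cdots+p^{d-1}$. The consequences for your write-up are:
\begin{itemize}
\item $\O/(p,I_2)\approx\kappa\powser{a}/(a^{p+1})$, not $\kappa\powser{a}/(a)$;
\item $f_1=a$ does not lie in $I_2$;
\item your first induction step ($d=0$) requires $f_0=1\in I_1$, which fails because $I_1=\m$.
\end{itemize}
The length check in your last paragraph is exactly where this should have been caught.

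The repair is a shift of index. Choose $f_d\in I_d$ (not $I_{d+1}$), monic of degree $n_d$, with $f_d\equiv a^{n_d}\pmod p$. Then $f_0=1$, and $f_1=a$ is a legitimate choice simply because $a\in\m=I_1$; no renormalization of the coordinate or uniqueness is involved. The weights now become consistent: $f_k\in I_k$ gives $p^{d-k}f_k\in I_d$. Your induction step, run with $x\in I_d$ and $I_d\cap p\O=pI_{d-1}$, gives $I_d=pI_{d-1}+(f_d)$, hence $I_d=(p^d,\,p^{d-1}f_1,\,\dots,\,pf_{d-1},\,f_d)$. This matches the paper's $p=2$ example, e.g.\ $I_3=(8,\,4a,\,2(a^3-2),\,a^7-2a)$. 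So item (4) should be read with last generator $f_d$ rather than $f_{d-1}$.
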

\begin{proof}
Since $\O/(p,I_{d+1})\approx \kappa[a]/(a^{n_d})$, we can choose a monic
polynomial $f_d\in 
  I_{d+1}$ of degree $n_d$ which reduces to $a^{n_d}$ mod $p$.  Using
  \eqref{thm:witt-filtration-compat-with-p} we see that
  $\frac{I_{d+1}}{pI_d} = 
  \frac{I_{d+1}}{I_{d+1}\cap p\O} = \frac{I_{d+1}+p\O}{p\O}$, a cyclic
  $\O$-module generated by the image of $f_d$.  Thus
  $I_{d+1}=pI_d+(f_d)$.  
\end{proof}

\begin{exam}[Height 2 and $p=2$]
  It is possible to compute the polynomials  $f_d$ by hand with  effort.
  For the particular example discussed in 
  \cite{rezk-dyer-lashof-example}, one can  
  show that
  \begin{align*}
    \O/I_1&=\Z_2\powser{a}/(2,a),
    \\
    \O/I_2 &=\Z_2\powser{a}/(4,2a, a^3-2),
    \\
    \O/I_3& =\Z_2\powser{a}/(8,4a,2a^3-4, a^7-2a).
  \end{align*}
\end{exam}

\section{Classical Witt vectors}
\label{sec:classical-witt}

In the following sections we construct and compute the splitting of
$\Witt_p$ from $\Witt_E$.  We begin by collecting some facts about the
classical Witt vectors (integral 
$\Witt_\Z$ and $p$-typical $\Witt_p$), whose proofs are largely left
to the references.  Our presentation is a bit unconventional, as  it is meant
to make evident the analogy with the Lubin-Tate-Witt vectors.  but it
is not really new, being a manifestation of the fact that the 
theory of integral Witt vectors is subsumed in that of
$\lambda$-rings\footnote{Apparently first observed by Cartier around
  1968.}, while the theory of $p$-typical Witt vectors is similarly
related to  $\delta$-rings \cite{joyal-delta-rings}.

The only novel feature worth noting is the definition of higher height
analogues of the Artin-Hasse exponential, which will play the main role in
our proof of the splitting of $\Witt_E$.  

In the following, $RG$ is the complex representation ring of a finite
group $G$.  We recall that $RG$ is always a finitely generated free abelian
group, and that the functor is symmetric monoidal, e.g., $R(G\times H)\approx
RG\otimes RH$.  

\subsection{Integral Witt vectors}\label{subsec:integral-witt}

For a commutative ring $A$,  define $\Witt_\Z(A)$  as the subset
\[
\Witt_\Z(A) \subseteq \prod_{m\geq0} R\Sigma_m \otimes A
\]
consisting 
of $a=(a_m)$ such that $a_0=1$ and $\Res_{\Sigma_i\times
  \Sigma_j}^{\Sigma_{i+j}}a_{i+j}
= a_i\otimes a_j$.  Then $\Witt_\Z(A)$ is a commutative ring under
\[
(a+b)_m = \sum_{i+j=m} \Ind_{\Sigma_i\times \Sigma_j}^{\Sigma_m}
(a_i\otimes b_j), \qquad (ab)_m=a_mb_m.
\]
Then $\Witt_\Z$ is corepresented by the ring
\[
T_\Z \defeq \bigoplus_{m\geq0} R\Sigma_m^\vee,
\]
where $M^\vee=\Hom_{\Mod_\Z}(M,\Z)$ and 
multiplication is induced by restriction along $\Sigma_i\times
\Sigma_j\leq \Sigma_{i+j}$.  
We have \cite{atiyah-power-operations}*{Cor.\ 1.3} an isomorphism of rings
\[
\Z[s_m,\; m\geq 1] \xra{\sim} T_\Z,  
\]
where $s_m\colon R\Sigma_m\ra \Z$ is given on representations by  $V\mapsto
\dim (V^{\Sigma_m})$.  (Note that $s_0=1$.)

Recall that all characters of symmetric group representations are
integer valued.
Let $w_n\in R\Sigma_n^\vee$ denote the map $V\mapsto \chi(V,c_n)$,
where $c_n\in \Sigma_n$ is any cycle of length $n$.  The monomial
$w_{n_1}\cdots w_{n_k}\in R\Sigma_n^\vee$, $n=\sum n_i$, then corresponds
to $V\mapsto \chi(V,g)$, where $g\in \Sigma_{n_1+\cdots+n_k}$ is any
product of disjoint cycles of 
lengths $n_1,\dots,n_k$.   The formula $s_m(V) =
\frac{1}{m!}\sum_{g\in \Sigma_m} \chi(V,g)$ gives rise to the first
equality 
in 
\[
  \sum_{m\geq0} s_m\; X^m
  = \exp\biggl[ \sum_{n\geq 1} \frac{w_n}{n}\; X^n\biggl]
  = \prod_{k\geq1} (1-x_k\, X^k)^{-1} \in T_\Z\powser{X}.
\]
The second equality gives another  polynomial basis $\Z[x_k,\;
k\geq1] \approx T_\Z$, whose elements are related to the $w_n$s by the
Witt polynomials $w_n=\sum_{k\mid n} kx_k^{n/k}$.
Evaluation at the elements $s_m$ give an isomorphism of abelian groups
\[
s\colon  \Witt_\Z(A)
  \xra{\sim} (1+X\, A\powser{X})^\times, \qquad a\;\mapsto \;
  \sum_{m\geq0} s_m(a_m)\, X^m,
\]
while evaluation at  $w_n$ gives a ring homomorphism $\gh_n\colon
\Witt_\Z(A) \ra A$, the $n$th \dfn{ghost component}.

\subsection{$p$-typical Witt vectors}

Fix a prime $p$.  Define the \dfn{$p$-typical representation ring}
$R_pG$ of
a finite group $G$ to be the image of $\Res_P^G\colon RG\ra RP$, where
$P\leq G$ is any $p$-Sylow subgroup.   Thus $R_pG$ is a quotient of
$RG$, in which virtual representations are identified when their
characters coincide when evaluated at elements of $p$th power order.
Note that $R_pG$ is also a finitely generated free abelian group, that as a
functor on the category of groups $R_p$ is symmetric monoidal, and that induced
representations pass to $R_p$: that is, if $H\leq G$, we have a map
$\Ind_H^G \colon R_pH\ra R_pG$ compatible under the quotient with the
induced representation map $\Ind_H^G\colon RH\ra RG$.

For a commutative ring $A$, define $\Witt_p(A)$ as a subset
\[
  \Witt_p(A) \subseteq \prod_{m\geq0} R_p\Sigma_m\otimes A 
\]
analogously to our definition of $\Witt_\Z(A)$.  
Then $\Witt_p$ is
corepresented by the ring
\[
  T_p \defeq \bigoplus_{m\geq0} R_p\Sigma_m^\vee.
\]
Note that $T_p$ is a subring of $T_\Z$, which furthermore is
a summand of $T_\Z$ as an abelian group.  This inclusion induces
a natural ring homomorphism
$\Witt_\Z(A)\ra
\Witt_p(A)$.  Furthermore, we have 
that $w_{p^d}, x_{p^d}\in T_p$ for $d\geq0$, and an isomorphism
of rings
\[
  \Z[x_{p^d},\; d\geq0]\xra{\sim} T_p.
\]
This implies that $\Witt_\Z(A)\ra \Witt_p(A)$ is always surjective.

\subsection{$p$-typical ghost components}

Let $\gh_{p^d}\colon \Witt_p(A)\ra A$ denote the ring homomorphism
induced by evaluation at $w_{p^d}\in T_p$.

\begin{lemma}
For each $d\geq0$ and ring $A$ we have commutative diagrams of rings
\[\xymatrix{
    {\Witt_p(A)} \ar[r]^-{\gh_{p^d}} \ar@{->>}[d]_{\Witt_p(\pi)}
    & {A} \ar@{->>}[d]^{\pi}
    & {\Witt_p(A/p)} \ar[r]^{\wt{\gh}_{p^{d+1}}} \ar[d]_{\Witt_p(\phi)}
    & {A/p^{d+2}} \ar@{->>}[d]^{\pi}
    \\
    {\Witt_p(A/p)} \ar[r]_-{\wt{\gh}_{p^d}}
    & {A/p^{d+1}}
    & {\Witt_p(A/p)} \ar[r]_-{\wt{\gh}_{p^d}}
    & {A/p^{d+1}}
  }  \]
where $\phi\colon A/p\ra A/p$ is the $p$th power map, and $\pi$
denotes the evident quotient maps.
\end{lemma}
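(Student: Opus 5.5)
The plan is to work entirely in the coordinates given by the polynomial basis $T_p\approx \Z[x_{p^d},\;d\geq0]$. Since $T_p$ is a polynomial ring on the $x_{p^k}$, an element $a\in\Witt_p(A)$ is the same as an arbitrary sequence $(a_k)_{k\geq0}$ of elements of $A$, where $a_k\defeq a(x_{p^k})$. Under this identification, $\Witt_p(\pi)$ acts coordinatewise; in particular it is surjective, because every sequence in $A/p$ lifts. The ghost map is given by the Witt polynomial
\[
\gh_{p^d}(a)=\sum_{k=0}^{d} p^k\, a_k^{p^{d-k}}.
\]
This formula comes from restricting the relation $w_n=\sum_{k\mid n}kx_k^{n/k}$ to $n=p^d$.

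Although $\wt{\gh}_{p^d}$ is not defined before the statement, I take it to be the map $\Witt_p(A/p)\ra A/p^{d+1}$ sending $b$ to $\gh_{p^d}(\wt b)$ mod $p^{d+1}$, where $\wt b\in\Witt_p(A)$ is any lift of $b$. The first step is to check that this is well defined. If two lifts have coordinates $a_k\equiv a_k'\pmod p$, then the standard congruence $x\equiv y\pmod p\Rightarrow x^{p^j}\equiv y^{p^j}\pmod{p^{j+1}}$ (proved by induction on $j$ via the binomial theorem) gives $p^k a_k^{p^{d-k}}\equiv p^k a_k'^{p^{d-k}}\pmod{p^{d+1}}$ term by term. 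Hence the two values of $\gh_{p^d}$ agree mod $p^{d+1}$. The map $\wt\gh_{p^d}$ is a ring homomorphism because $\gh_{p^d}$ is one and the lifts can be chosen through the surjective ring map $\Witt_p(\pi)$. With this definition, the left square commutes tautologically: the element $a$ is itself a lift of $\Witt_p(\pi)(a)$.

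For the right square, take $b\in\Witt_p(A/p)$ and a lift $\wt b$ with coordinates $a_k\in A$. The map $\Witt_p(\phi)$ is induced by a ring endomorphism of $A/p$, so it acts on coordinates by $b_k\mapsto b_k^p$. Therefore the sequence $(a_k^p)$ is a lift of $\Witt_p(\phi)(b)$. Evaluating gives
\[
\wt\gh_{p^d}(\Witt_p(\phi)b)=\sum_{k=0}^{d}p^k a_k^{p^{d+1-k}}
=\gh_{p^{d+1}}(\wt b)-p^{d+1}a_{d+1}\equiv \wt\gh_{p^{d+1}}(b)\pmod{p^{d+1}},
\]
which is exactly the commutativity of the right square.

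I expect the main obstacle to be the well-definedness of $\wt\gh_{p^d}$, that is, the $p$-power congruence lemma. It is classical, and everything else reduces to bookkeeping with the Witt polynomials once the coordinates $x_{p^k}$ are known to be free polynomial generators of $T_p$.
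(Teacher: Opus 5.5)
Your proof is correct and follows the same route as the paper. Both arguments use the coordinates $x_{p^k}$ and the Witt polynomial $\gh_{p^d}=\sum_{j=0}^d p^j x_{p^j}^{p^{d-j}}$, derive well-definedness of $\wt\gh_{p^d}$ from the congruence $x\equiv y \pmod p\Rightarrow x^{p^j}\equiv y^{p^j}\pmod{p^{j+1}}$, and get multiplicativity from surjectivity of $\Witt_p(\pi)$; you additionally write out the right-square computation that the paper leaves as an ``elementary calculation with the Witt polynomials.''
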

\begin{proof}
  For an element $x=(x_{p^k})_{k\geq0}\in \Witt_p(A)$, note that
  $\gh_{p^d}(x)$ is given by the Witt polynomial
  $\gh_{p^d}(x)=\sum_{j=0}^d p^j x_{p^j}^{p^{d-j}}$.  Thus if
  $x=(x_{p^j}), y=(y_{p^j})\in \Witt_p(A)$ are such that
  $x_{p^j}\equiv y_{p^j} \pmod {pA}$, an elementary argument gives
  \[
    x_{p^j}^{p^{d-j}} \equiv y_{p^j}^{p^{d-j}} \pmod{p^{d-j+1}A}, \quad
    \text{whence} \quad \gh_{p^d}(x)\equiv \gh_{p^d}(y) \pmod{p^{d+1}A}.  
  \]
  Thus the map $\wt{\gh}_{p^d}\colon \Witt_p(A/p) \ra A/p^{d+1}$ is
  defined and makes the left-hand square commute, and is a ring
  homomorphism since $\Witt_p(\pi)$ is surjective.  That the
  right-hand square commutes is an elementary calculation with the
  Witt polynomials.  
\end{proof}

\subsection{$p$-typical Witt vectors of perfect rings}
\label{subsec:p-typical-witt-of-perfect}

The maps $\wt{\gh}_{p^d}\colon \Witt(A/p)\ra A/p^{d+1}$  assemble
into a ring homomorphism
  \[
\lim_{\longleftarrow} \bigl( \Witt_p(A/p), \Witt_p(\phi) \bigr) \ra
\lim_{\longleftarrow} A/p^{d+1}. 
\]
When $A$ is $p$-complete and $\kappa=A/p$ is a perfect
$\F_p$-algebra, we get a 
ring homomorphism $\wt\gh_{p^\infty}\colon \Witt_p(\kappa)\ra A$, which
exhibits $\Witt_p(\kappa)$ as the initial example of a $p$-complete ring
$A$ equipped with an isomorphism $A/p\approx \kappa$.  In particular,
we obtain the following.

\begin{cor}\label{cor:witt-fp-is-p-padics}
  There is an isomorphism of rings $\wt\gh_{p^\infty}\colon
  \Witt_p(\F_p)\xra{\sim} \Z_p$ defined by
  $\wt\gh_{p^\infty}(a)=\lim_{d\to\infty} \gh_{p^d}(\wt{a})$, where
  $\wt{a}\in \Witt_p(\Z_p)$ is any lift of $a$.
\end{cor}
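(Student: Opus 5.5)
We apply the universal property just stated, with $A=\Z_p$. Here $\Z_p$ is $p$-complete and $\Z_p/p=\F_p$ is perfect, so $\wt\gh_{p^\infty}\colon \Witt_p(\F_p)\ra \Z_p$ is a ring homomorphism. By construction it is the limit over $d$ of the maps $\wt\gh_{p^d}\colon \Witt_p(\F_p)\ra \Z_p/p^{d+1}$. The inverse system is $\bigl(\Witt_p(\F_p),\Witt_p(\phi)\bigr)$ with $\phi=\id$ on $\F_p$, so this limit is just $\Witt_p(\F_p)$. By the left-hand square of the preceding lemma, $\wt\gh_{p^d}(a)$ is $\gh_{p^d}(\wt a)$ mod $p^{d+1}$ for any lift $\wt a\in\Witt_p(\Z_p)$; such a lift exists because $\Witt_p(\Z_p)\ra\Witt_p(\F_p)$ is surjective, being induced on coordinates $x_{p^j}$ by the surjection $\Z_p\ra\F_p$. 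The right-hand square, with $\phi=\id$, says these residues are compatible. Hence $\gh_{p^d}(\wt a)$ converges $p$-adically, and its limit is independent of the lift. This gives the formula in the statement.

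It remains to show that $\wt\gh_{p^\infty}$ is bijective. The claimed initiality gives one route: $\Witt_p(\F_p)$ is initial among $p$-complete rings $A$ with $A/p\approx\F_p$. Since $\Z$ is initial among all rings, the unique map $\Z_p\ra A$ (uniquely determined, since $A$ is $p$-complete) makes $\Z_p$ initial too, so the two initial objects are isomorphic via the unique map, which is $\wt\gh_{p^\infty}$.

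Because that initiality was only asserted, I would prefer a direct check, and this is where the real work lies. I will show that $\Witt_p(\F_p)$ is $p$-adically complete and separated, is $p$-torsion free, and satisfies $\Witt_p(\F_p)/p\approx\F_p$ compatibly with the map. Then Lemma \eqref{lemma:ab-group-thing}, applied to $\Z_p\ra\Witt_p(\F_p)$ (the unique ring map, whose composite with $\wt\gh_{p^\infty}$ is the identity of $\Z_p$), finishes the argument. For these properties I would use the height-one case of this paper: $\Witt_E=\Witt_p$ for the multiplicative formal group over $\F_p$. Theorem \eqref{thm:witt-filtration-compat-with-p} then gives $p$-torsion freeness. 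The remark following it gives $\Wideal_d=p^d\Witt_p(\F_p)$, so Proposition \eqref{prop:witt-filtration-basic} yields completeness and separatedness. Finally, $\Witt_p(\F_p)/p=\Witt/\Wideal_1\approx\F_p$ by the associated-graded computation, using $N^1_1=1$. The main obstacle is making this identification legitimate without circularity, since the height-one isomorphism $\Witt_E\approx\Witt_p$ was only stated. If that is unavailable, I would instead check $p$-torsion freeness directly on Witt coordinates, using the classical formula $p\cdot(x_0,x_1,\dots)=(0,x_0^p,x_1^p,\dots)$ valid over $\F_p$.
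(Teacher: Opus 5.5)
Your first route is exactly the paper's proof: the corollary is recorded as an immediate consequence of the asserted initiality of $\Witt_p(\kappa)$ among $p$-complete rings $A$ with $A/p\approx\kappa$, applied to $A=\Z_p$. Your unpacking of the formula is accurate. The inverse system is constant because $\phi=\id$ on $\F_p$, lifts can be chosen coordinatewise, and the two squares of the lemma give convergence and independence of the lift.

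The ``direct check'' you say you prefer is a genuinely different route. In effect it is the height-one, $\kappa=\F_p$ instance of the paper's cofreeness argument (Lemma \eqref{lemma:ab-group-thing} fed by Theorem \eqref{thm:witt-filtration-compat-with-p}), transported along $\Witt_E\approx\Witt_p$. It does not gain rigor over the paper, because it rests on two inputs the paper only asserts:
\begin{itemize}
\item the identification $\Witt_E\approx\Witt_p$, which the paper states without proof in the introductory height-one example, next to the claim $\O=\Witt_p\kappa$; for $\kappa=\F_p$ that claim is essentially the statement you are proving, and the identification is a much heavier topological input than the classical universal property it replaces;
\item the height-one remark $\Wideal_1\subseteq p\Witt_E$, which is also stated without argument.
\end{itemize}

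If you want a self-contained proof, carry your fallback further. Over $\F_p$-algebras one has $p=VF$, so on $\Witt_p(\F_p)$ multiplication by $p$ is the shift of Witt coordinates. Hence $p^k\Witt_p(\F_p)$ is exactly the set of vectors whose first $k$ coordinates vanish. That one fact gives $p$-torsion freeness, $p$-adic completeness and separatedness, and $\Witt_p(\F_p)/p\approx\F_p$, not just the torsion freeness you mention. Your application of Lemma \eqref{lemma:ab-group-thing} to $\Z_p\ra\Witt_p(\F_p)$ then finishes the proof with no appeal to $\Witt_E$.
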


\subsection{Units in Witt vectors}

\begin{prop}\label{prop:units-in-witt-vectors}
  We have that
\begin{align*}
  \Witt_\Z(A)^\times
  &= \set{a\in \Witt_\Z(A)}{ \text{$\gh_m(a)\in
    A^\times$ for all $m\geq 1$}},
  \\
  \Witt_p(A)^\times
  &= \set{a\in \Witt_p(A)}{ \text{$\gh_{p^d}(a)\in
    A^\times$ for all $d\geq 0$}}.
\end{align*}
\end{prop}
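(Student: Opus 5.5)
The inclusion ``$\subseteq$'' is immediate: each $\gh_m$ (resp.\ $\gh_{p^d}$) is a ring homomorphism, so it carries units to units. The content is the converse. I would reduce both statements to a universal computation with the corepresenting rings $T_\Z=\Z[x_k]$ and $T_p=\Z[x_{p^d}]$, and exploit the fact that the ghost map is an isomorphism after inverting the relevant integers.

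First I treat the integral case. Given $a\in\Witt_\Z(A)$ with every $\gh_m(a)\in A^\times$, I look for $b$ with $ab=1$. In ghost coordinates this means $\gh_m(b)=\gh_m(a)^{-1}$. Write $a$ in Witt coordinates $a=(a_1,a_2,\dots)$, meaning the values of $x_k$. The product $ab$ in Witt coordinates is given by universal polynomials. Using the recursion $w_n=\sum_{k\mid n}k x_k^{n/k}$, the $n$th Witt coordinate of $ab$ has the form
\[
(ab)_n = \text{(universal polynomial in } a_k,b_k,\ k<n) + \gh_n(\cdot)\text{-corrections},
\]
and more precisely I claim the following. Modulo polynomials in $a_k,b_k$ with $k<n$, the coordinate $(ab)_n$ equals $\bigl(\prod\text{of lower ghost data}\bigr)\cdot b_n + (\dots)$, with the coefficient of $b_n$ being $\gh_n$-related. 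The cleanest way to get this is the standard formula $x\cdot V_n[c] = V_n([c]\cdot F_n x)$. It shows that the coefficient of $b_n$ in $(ab)_n$ is the first Witt coordinate of $F_n a$, namely $\gh_n(a)$, up to terms involving only $b_k$ with $k<n$. Concretely, I write $b=\sum_n V_n[b'_n]$, which is a convergent sum since every Witt vector has a unique such expansion. Then
\[
ab=\sum_n V_n\bigl([b'_n]F_n a\bigr),
\]
and $F_na$ has first coordinate $\gh_n(a)$. Solving $ab=1$ inductively on $n$, the $n$th equation is linear in $b'_n$ with coefficient $\gh_n(a)$, which is a unit. So $b'_n$ is uniquely determined, and $b$ exists.

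For the $p$-typical case the argument is identical, using only $n=p^d$: every element of $\Witt_p(A)$ is $\sum_d V^d[b_d]$, and $F^d a$ has first coordinate $\gh_{p^d}(a)$. Alternatively, one can deduce the $p$-typical statement from the integral one via the surjection $\Witt_\Z(A)\to\Witt_p(A)$ after a $\Z_{(p)}$-localization splitting. I prefer the direct argument.

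The main obstacle is making rigorous the triangularity claim: that in $\sum_n V_n([b'_n]F_na)$ the $n$th Witt coordinate equals $\gh_n(a)b'_n$ plus terms depending only on $b'_k$ for $k<n$. The pieces needed are these:
\begin{itemize}
\item $V_n(y)$ has vanishing coordinates below index $n$ and $n$th coordinate $y_1$;
\item $[c]\cdot y$ has first coordinate $c\,y_1$;
\item addition of Witt vectors is, in the $n$th coordinate, the sum of the $n$th coordinates plus a polynomial in lower coordinates.
\end{itemize}
Each follows from the Witt polynomial description in \S\ref{subsec:integral-witt}, checked universally over $\Z$ by passing to ghost components over $\Q$.
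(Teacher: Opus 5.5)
Your argument is correct, and it takes a genuinely different route from the paper's.

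The paper argues universally through $\lambda$-rings:
\begin{itemize}
\item $T_\Z$ is the free $\lambda$-ring on $x=w_1$.
\item The multiplicative set generated by the $w_m$ is closed under Adams operations, so by Wilkerson's localization result $T_\Z[w^{-1}]$ is again a $\lambda$-ring, now with $x$ invertible.
\item An $a$ with all $\gh_m(a)\in A^\times$ extends to $a'\colon T_\Z[w^{-1}]\to A$. Then $a$ is the image of the unit $x$ under the ring map $\Witt_\Z(a')\circ\nabla$, hence a unit.
\end{itemize}

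You instead solve $ab=1$ by hand. You write $b=\sum_n V_n[b_n]$, where the $b_n$ are the Witt coordinates; this is consistent with the paper's $\prod_k(1-x_kX^k)^{-1}$. Using $a\,V_n[c]=V_n([c]F_na)$ and $\gh_1(F_na)=\gh_n(a)$, the $n$th Witt coordinate of $ab$ is $\gh_n(a)b_n$ plus a universal polynomial in $a$ and $b_1,\dots,b_{n-1}$. So the system is triangular with unit diagonal. Your three bulleted facts are exactly what is needed, each checkable on ghost components over $\Z$, together with two remarks. First, $V_k(\cdot)$ for $k>n$ does not touch coordinates $\le n$. Second, $(u+v)_n=u_n+v_n$ whenever all coordinates of $v$ below $n$ vanish, since the carry polynomial vanishes when $v_{<n}=0$ (because $u+0=u$).

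The trade-off between the two routes:
\begin{itemize}
\item The paper's proof is shorter and conceptual. However, it relies on the dictionary between $\Witt_\Z$-coalgebras and $\lambda$-rings and on an external localization theorem. Its ``same method'' for $\Witt_p$ tacitly needs the $\delta$-ring analogue of that theorem.
\item Your proof is elementary and gives an explicit recursion for the inverse. It carries over verbatim to $\Witt_p$, using $V,F$ and $n=p^d$, with no additional input.
\end{itemize}

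Your aside about deducing the $p$-typical case from the integral one via the splitting $j$ would only cover $\Z_{(p)}$-algebras. Even there it would need a modification, since $j(a)$ has vanishing non-$p$-power ghost components; one would apply the integral case to $j(a)+(1-\AH^1_\Z)$ instead. So you are right not to rely on it.
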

\begin{proof}
  We write the proof in the case of $\Witt_\Z$, as the proof for
  $\Witt_p$ can be proved by exactly the same method.  Note that $\subseteq$ is
  automatic, since ghost components are ring homomorphisms.

  For this proof we freely use the fact that $\Witt_\Z$ is the comonad
  whose coalgebras are $\lambda$-rings.  In particular, $T_\Z$ is
  the free $\lambda$-ring on one generator $x$.   Consider $i\colon
  T_\Z \ra T_\Z[w^{-1}]$, where the target is the ring
  obtained by formally inverting all the $w_m$ for $m\geq 1$.   Then
  $i$ is a morphism of $\lambda$-rings by
  \cite{wilkerson-lambda-rings}*{Cor.\ 1.3}, which asserts that
  inverting a multiplicatively closed subset of a $\lambda$-ring which
  is closed under Adams operations yields a $\lambda$-ring.

Thus given $a\in \Witt_\Z(A)$ with $\gh_m(a)\in A^\times$, and so
represented by a ring map $a'\colon T_\Z[w^{-1}]\ra A$ such that
$a'\circ i=a$,  we have ring
homomorphisms  
  \[
  \Z\{x\}[w^{-1}] \xra{\nabla} \Witt_\Z (\Z\{x\}[w^{-1}])
  \xra{\Witt_\Z(a')} \Witt_\Z(A)
\]
where $\nabla$ is the coalgebra structure map defining the
$\lambda$-ring structure on $T_\Z[w^{-1}]$.  The image of $x$
under $\Witt_\Z(a')\circ \nabla$ is $a$, which is therefore invertible
being the image of the invertible element $x$.  
\end{proof}

\subsection{Height $h$ Artin-Hasse exponentials}

\begin{lemma}\label{lemma:gen-conj-number-is-p-local}
  Let $G$ be a finite group, $p$ a prime, and  $h\geq1$.  Then
  \[
    \frac{\len{\Hom(\Z_p^h, G)}}{\len{G}} \in \Z_{(p)}.
  \]
\end{lemma}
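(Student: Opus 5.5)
We argue by induction on $h$, reducing everything to the case $h=1$, where the statement is a classical theorem of Frobenius.

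\emph{Base case $h=1$.} A homomorphism $\Z_p\ra G$ is the same as an element $g\in G$ of $p$-power order. Indeed, a continuous map out of $\Z_p$ is determined by the image of $1$, and that image must have $p$-power order. So $\len{\Hom(\Z_p,G)}$ is the number of $p$-elements of $G$.

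Write $\len{G}=p^k m$ with $p\nmid m$. An element of $G$ has $p$-power order if and only if it satisfies $x^{p^k}=1$. By Frobenius' theorem, the number of solutions of $x^n=1$ in $G$ is divisible by $\gcd(n,\len{G})$. Taking $n=p^k$, the number of $p$-elements is divisible by $p^k$. Hence
\[
\frac{\len{\Hom(\Z_p,G)}}{\len{G}} = \frac{p^k c}{p^k m} = \frac{c}{m}\in \Z_{(p)}
\]
for some integer $c$.

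\emph{Inductive step.} Suppose $h\geq 2$. A homomorphism $\Z_p^h=\Z_p\times \Z_p^{h-1}\ra G$ is the same as a pair consisting of
\begin{itemize}
\item a $p$-element $g\in G$ (the image of the first generator), and
\item a homomorphism $\Z_p^{h-1}\ra C_G(g)$ into the centralizer of $g$.
\end{itemize}
This gives the disjoint union $\Hom(\Z_p^h,G)=\coprod_{g}\Hom(\Z_p^{h-1},C_G(g))$, with $g$ ranging over $p$-elements. Group the $g$ by conjugacy class, using that conjugate elements have isomorphic centralizers and that the class of $g$ has $\len{G}/\len{C_G(g)}$ elements. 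This yields
\[
\frac{\len{\Hom(\Z_p^h,G)}}{\len{G}} = \sum_{[g]} \frac{\len{\Hom(\Z_p^{h-1},C_G(g))}}{\len{C_G(g)}},
\]
where the sum runs over conjugacy classes of $p$-elements of $G$. Each summand lies in $\Z_{(p)}$ by the inductive hypothesis applied to the finite group $C_G(g)$. Hence so does the sum.

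\emph{Main obstacle.} The only nontrivial input is the base case, namely Frobenius' divisibility theorem. If a self-contained argument is wanted, I would first try the following route. Let $P$ be a Sylow $p$-subgroup. Then $P$ acts by conjugation on the set of $p$-elements of $G$, and one can try to show by a counting argument over orbits that the total is a multiple of $\len{P}$. Failing that, I would reproduce the standard proof of Frobenius' theorem by induction on $n$.
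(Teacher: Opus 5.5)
Your proof is correct and follows the same route as the paper. Frobenius' divisibility theorem, applied with $n=p^k$ the $p$-part of $\len{G}$, handles $h=1$; the induction on $h$ uses exactly the paper's identity, splitting $\Hom(\Z_p^h,G)$ over $p$-elements $g$ and regrouping by conjugacy class to get $\sum_{[g]}\len{\Hom(\Z_p^{h-1},\Cent_G(g))}/\len{\Cent_G(g)}$. The speculative Sylow-orbit argument in your last paragraph is unnecessary, since citing Frobenius' theorem is precisely what the paper does.
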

\begin{proof}
  Write $G_m=\set{g\in G}{g^m=1}$ and $G_{p^\infty}=\bigcup G_{p^k}$. 
  When $h=1$ the claim 
  is immediate from Frobenius' theorem\footnote{See
    \cite{speyer-counting-frobenius} for  
    a recent elegant proof.} that $\len{G_m}/m\in \Z$.  For the 
  general case use induction on $h$ and the evident identity
  \[
    \frac{\len{\Hom(\Z_p^h,G)}}{\len{G}}
    = \sum_{g\in G_{p^\infty}} \frac{\Hom(\Z_p^{h-1},
      \Cent_G(g))}{\len{G}} =
\sum_{[g]\in G_{p^\infty}^{\mr{cl}}} \frac{\len{\Hom(\Z_p^{h-1},
      \Cent_G(g))}}{\len{\Cent_G(g)}},
  \]
  the latter sum taken over representatives of conjugacy classes in
  $G_{p^\infty}$.  
\end{proof}

\begin{rem}
  Some readers may find it easier to observe that
  $\len{\Hom(\Z_p^h,G)}/\len{G}$ is the height $h$ chromatic
  cardinality of $BG$, and so necessarily a $p$-local integer
  \cite{benmoshe-higher-semiadditivity-transchromatic}*{Theorem B}. 
\end{rem}

Fix a prime $p$.  As is our convention  (\S\ref{subsec:conventions}) we write
\begin{align*}
  N^h_m &\defeq \len{\bigl\{\text{subgroups of order $m$ in
          $(\Q_p/\Z_p)^h$}\bigr\}},
\end{align*}
so that $N^h_m=0$ unless $m$ is a $p$th-power.  We note the
generating series
    \[
      N^h(T)\defeq \sum_{d\geq0} N^h_{p^d}\, T^d = \frac{1}{(1-T)(1-pT)\cdots
        (1-p^{h-1}T)} \in \Z\powser{T},
    \]
    whence $N^h_{p^d}\equiv 1 \pmod p$.
\begin{lemma}\label{lemma:Nhp-limit}
  For a prime $p$ and $h\geq 1$, the sequence $\{
  N^h_{p^d}\}_{d\geq0}$ converges $p$-adically: we have 
  \[
    N^h_{p^\infty} \defeq \lim_{d\to \infty} N^h_{p^d} =
    \frac{1}{(1-p)(1-p^2)\cdots (1-p^{h-1})} \in \Z_p.
    \]
  \end{lemma}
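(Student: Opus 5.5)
The plan is to work directly from the generating series $N^h(T)=\prod_{j=0}^{h-1}(1-p^jT)^{-1}$ and write $N^h_{p^d}$ in closed form by partial fractions. Since the roots $1,p^{-1},\dots,p^{-(h-1)}$ are distinct, we get
\[
  \frac{1}{\prod_{j=0}^{h-1}(1-p^jT)} = \sum_{j=0}^{h-1}\frac{c_j}{1-p^jT},
  \qquad c_j = \prod_{k\neq j}\frac{1}{1-p^{k-j}} ,
\]
and so $N^h_{p^d}=\sum_{j=0}^{h-1} c_j\,p^{jd}$. Each $c_j$ is a rational number, so the sequence is a fixed $\Q$-linear combination of $p^{jd}$.

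For $j\geq1$ we have $p^{jd}\to 0$ $p$-adically, so $N^h_{p^d}\to c_0=\prod_{k=1}^{h-1}(1-p^k)^{-1}$ in $\Q_p$. This is the claimed limit. It lies in $\Z_p$, indeed in $\Z_{(p)}^\times$, because each $1-p^k$ is a $p$-adic unit.

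The only point needing care is that the $c_j$ with $j\geq1$ have denominators divisible by powers of $p$. Indeed, $1-p^{k-j}$ for $k<j$ equals $(p^{j-k}-1)/p^{j-k}$, which has negative valuation when inverted; more precisely, $c_j$ carries a factor of $p^{\sum_{k<j}(j-k)}$ in the numerator. In any case the $c_j$ are fixed constants with finite valuation, and $p^{jd}$ has valuation $jd\to\infty$, so $c_jp^{jd}\to0$ regardless. Thus no real obstacle arises; the main step is simply the partial fraction computation.

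Alternatively, one can argue by induction on $h$ to avoid partial fractions. From $N^h(T)(1-p^{h-1}T)=N^{h-1}(T)$ we get
\[
N^h_{p^d}=N^{h-1}_{p^d}+p^{h-1}N^h_{p^{d-1}}.
\]
Summing this recursion shows that the differences $N^h_{p^d}-N^h_{p^{d-1}}$ have valuation tending to infinity, which gives convergence. Passing to the limit in the recursion then gives $L_h=L_{h-1}+p^{h-1}L_h$, i.e.\ $L_h=L_{h-1}/(1-p^{h-1})$, starting from $L_1=1$. I would present the partial-fraction version as the main argument.
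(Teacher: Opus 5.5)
Your proof is correct, but your main argument takes a different route from the paper's. The paper's one-line proof uses only the factorization
\[
(1-T)\,N^h(T)=\prod_{j=1}^{h-1}(1-p^jT)^{-1}.
\]
The $T^d$-coefficient of the left side is $N^h_{p^d}-N^h_{p^{d-1}}$. The right side is a power series in $pT$ with integer coefficients, so that coefficient is divisible by $p^d$. Hence the partial sums $N^h_{p^d}$ converge $p$-adically, and the limit is the value of the right side at $T=1$, namely $\prod_{j=1}^{h-1}(1-p^j)^{-1}$.

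Your partial-fraction decomposition instead gives the exact closed form $N^h_{p^d}=\sum_j c_j p^{jd}$. This exhibits the limit as the coefficient $c_0$ of the pole at $T=1$ and gives the error terms explicitly, at the cost of computing all the $c_j$. The paper's route needs no decomposition, and the divisibility $p^d\mid N^h_{p^d}-N^h_{p^{d-1}}$ gives convergence at once; its one-liner leaves this step implicit.

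Your inductive alternative is essentially the paper's argument. The clean way to do your ``summing'' step is the identity $N^h_{p^d}-N^h_{p^{d-1}}=p^d\,[T^d]\prod_{j=1}^{h-1}(1-p^jT)^{-1}$, read off from the factorization above.

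One small correction: the $c_j$ for $j\geq1$ do not have powers of $p$ in their denominators. As you then say yourself, $c_j$ has $p$-adic valuation $\sum_{k<j}(j-k)=j(j+1)/2\geq0$, so in fact $c_j\in\Z_{(p)}$. This slip is harmless, since, as you note, convergence only needs the $c_j$ to be fixed constants.
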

  \begin{proof}
    The generating series gives
 $N^h_{p^\infty} = (1-T)\,N^h(T) \bigm|_{T=1}$.
  \end{proof}

The following defines the \dfn{height $h$ Artin-Hasse exponential}
$\AH^h_\Z$ (at the prime $p$), which may be viewed as a Witt vector in the $p$-local
integers.    The height 1 case is precisely 
the classical Artin-Hasse exponential.  
\begin{lemma}\label{lemma:ht-h-artin-hasse}
  For $h\geq 1$ we have an identity
  \[
    \AH_\Z^h\defeq \sum_{m\geq0} \frac{\len{\Hom(\Z_p^h,
        \Sigma_m)}}{m!} X^m = \exp \biggl[\;\sum_{d\geq0} 
      \frac{N^h_{p^d}}{p^d}\, X^{p^d}\;\biggr] \in
      (1+X\,\Z_{(p)}\powser{X})^\times 
      \approx \Witt_\Z(\Z_{(p)}).
    \]
    In particular, $\gh_{p^d}(\AH^h_\Z)=N^h_{p^d}$, with
    $\gh_m(\AH^h_\Z)=0$ when $m$ is not a $p$th power.
  \end{lemma}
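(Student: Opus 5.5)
The plan is to prove the exponential identity by a standard exponential-formula (Burnside/orbit-counting) argument, and then read off the ghost components from the dictionary of \S\ref{subsec:integral-witt}; integrality is supplied by \eqref{lemma:gen-conj-number-is-p-local}.

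\emph{Step 1: the exponential formula.} A homomorphism $\Z_p^h\to\Sigma_m$ is the same as a continuous action of $\Z_p^h$ on the set $\ul{m}$. Every such action is a disjoint union of transitive actions. A transitive $\Z_p^h$-set of size $n$ has the form $\Z_p^h/U$ with $U$ open of index $n$, so $n=p^d$. By the exponential formula,
\[
\sum_{m\geq0}\frac{\len{\Hom(\Z_p^h,\Sigma_m)}}{m!}X^m=\exp\Bigl[\sum_{n\geq1}\frac{c_n}{n!}X^n\Bigr],
\]
where $c_n$ is the number of transitive actions on $\ul{n}$. Such an action is determined by the choice of $U$ together with a bijection $\Z_p^h/U\approx\ul{n}$, taken modulo the automorphisms $\Z_p^h/U$ of the orbit. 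Hence
\[
c_n=(\#\{U:[\Z_p^h:U]=n\})\cdot\frac{n!}{n}.
\]
By Pontryagin duality, open subgroups of index $p^d$ in $\Z_p^h$ correspond to subgroups of order $p^d$ in $(\Q_p/\Z_p)^h$, so the count of such $U$ is $N^h_{p^d}$. Consequently $c_n/n!=N^h_{p^d}/p^d$ when $n=p^d$, and $c_n=0$ otherwise. This proves the displayed equality.

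\emph{Step 2: integrality.} By \eqref{lemma:gen-conj-number-is-p-local} applied to $G=\Sigma_m$, each coefficient of the series lies in $\Z_{(p)}$. The constant term is $1$, so the series lies in $(1+X\,\Z_{(p)}\powser{X})^\times$. The isomorphism $s$ of \S\ref{subsec:integral-witt} then identifies this series with an element $\AH^h_\Z\in\Witt_\Z(\Z_{(p)})$.

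\emph{Step 3: ghost components.} I will use the identity $\sum s_mX^m=\exp[\sum_n (w_n/n)X^n]$ in $T_\Z\powser{X}$. Evaluating it at $a\in\Witt_\Z(A)$ gives $s(a)=\exp[\sum_n \gh_n(a)X^n/n]$ whenever $A$ is torsion free. Since $\Z_{(p)}\subseteq\Q$, the logarithm of $s(a)$ determines every $\gh_n(a)$. Comparing with Step 1 gives $\gh_{p^d}(\AH^h_\Z)=N^h_{p^d}$, and $\gh_m(\AH^h_\Z)=0$ for $m$ not a power of $p$.

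The main care point is the orbit bookkeeping in Step 1: each transitive action of size $n$ must be counted exactly once. The automorphism group of the orbit $\Z_p^h/U$ has order $n$ because the group is abelian. The rest of the argument is formal.
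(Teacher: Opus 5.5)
Your proposal is correct and follows essentially the paper's route: the paper cites Wohlfahrt's formula (equivalently \eqref{prop:exponential-identity-marks} with $f\equiv 1$) for $G=\Z_p^h$, which is what your orbit-decomposition count in Step 1 reproves, and then it uses \eqref{lemma:gen-conj-number-is-p-local} for integrality and the identity $\sum s_mX^m=\exp[\sum_n (w_n/n)X^n]$ for the ghost components, exactly as you do. Your Pontryagin-duality step, which identifies the number of index-$p^d$ subgroups of $\Z_p^h$ with $N^h_{p^d}$, fills in a point the paper leaves implicit.
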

  \begin{proof}
    The identity is  a special case of Wohlfahrt's formula
    \[
      \sum_{m\geq0} \frac{\len{\Hom(G,\Sigma_m)}}{m!} X^m = \exp
      \biggl[\; \sum_{H\leq G} \frac{X^{\len{G:H}}}{\len{G:H}} \biggr],
    \]
    originally proved in \cite{wohlfahrt-satz-von-dey} for homomorphisms from
    finite $G$, but which 
    evidently applies with $G=\Z_p^h$ as well (see
    \eqref{rem:wohlfahrt-identity} below). 
    That the coefficients are $p$-local integers is
    \eqref{lemma:gen-conj-number-is-p-local}.   The identification of
    ghost components follows from the discussion in
    \eqref{subsec:integral-witt}. 
  \end{proof}

We write $\AH^h_p\in
\Witt_p(\Z_{(p)})$ for the image of $\AH^h_\Z$  under the projection
$\Witt_\Z(\Z_{(p)})\ra 
\Witt_p(\Z_{(p)})$.

\begin{prop}\label{prop:ht-h-artin-hasse-in-fp}
  For all $h\geq 1$ the element $\AH^h_p\in \Witt_p(\Z_{(p)})$ is
  a unit.  In particular, $\AH^1_p$ is the identity element of the
  ring 
  $\Witt_p(\Z_{(p)})$. 
Furthermore, the image of $\AH^h_p$ under the projection to 
$ \Witt_p(\F_p)=\Z_p$ is $N^h_{p^\infty}=[(1-p)\cdots
  (1-p^{h-1})]^{-1}$.  
\end{prop}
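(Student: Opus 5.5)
The plan has three parts: prove that $\AH^h_p$ is a unit using the ghost-component criterion for units, show that $\AH^1_p=1$ by computing its ghost components, and compute the image in $\Witt_p(\F_p)=\Z_p$ through the limit description of Corollary~\eqref{cor:witt-fp-is-p-padics}.

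\emph{Unit.} By Proposition~\eqref{prop:units-in-witt-vectors}, $\AH^h_p$ is a unit in $\Witt_p(\Z_{(p)})$ iff $\gh_{p^d}(\AH^h_p)\in\Z_{(p)}^\times$ for all $d\ge0$. The projection $\Witt_\Z\to\Witt_p$ is induced by the inclusion $T_p\subseteq T_\Z$, and $w_{p^d}\in T_p$. Hence the $p$-typical ghost components of $\AH^h_p$ are the integral ghost components $\gh_{p^d}(\AH^h_\Z)$. Lemma~\eqref{lemma:ht-h-artin-hasse} gives $\gh_{p^d}(\AH^h_\Z)=N^h_{p^d}$. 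From the generating series, $N^h_{p^d}\equiv1\pmod p$, so each $N^h_{p^d}$ is a $p$-local unit, and $\AH^h_p$ is a unit.

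\emph{Height one.} For $h=1$ the generating series is $1/(1-T)$, so $N^1_{p^d}=1$ for all $d$. Thus $\AH^1_p$ and $1$ have the same ghost components $\gh_{p^d}$ for every $d$. The ghost map $\Witt_p(A)\to\prod_d A$ is injective when $A$ is $p$-torsion free: the Witt polynomials $w_{p^d}=\sum_j p^jx_{p^j}^{p^{d-j}}$ can be solved recursively for the coordinates $x_{p^d}$ after inverting $p$, and $\Z_{(p)}\subseteq\Q$. Hence $\AH^1_p=1$.

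\emph{Image in $\Witt_p(\F_p)$.} Take $\AH^h_p$ itself, viewed in $\Witt_p(\Z_p)$ via $\Z_{(p)}\subseteq\Z_p$, as the lift $\wt a$ of its image $a\in\Witt_p(\F_p)$. Corollary~\eqref{cor:witt-fp-is-p-padics} then gives
\[
\wt\gh_{p^\infty}(a)=\lim_{d\to\infty}\gh_{p^d}(\AH^h_p)=\lim_{d\to\infty}N^h_{p^d}.
\]
By Lemma~\eqref{lemma:Nhp-limit}, this limit is $N^h_{p^\infty}=[(1-p)\cdots(1-p^{h-1})]^{-1}$.

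The only point needing care is that the ghost components are compatible with the projection $\Witt_\Z\to\Witt_p$ and with base change $\Z_{(p)}\to\Z_p$. Both follow from naturality, since $w_{p^d}$ is the same element of $T_p\subseteq T_\Z$. Nothing else is expected to be an obstacle.
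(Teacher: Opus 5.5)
Your proposal is correct and follows the paper's argument. You read off $\gh_{p^d}(\AH^h_p)=N^h_{p^d}$, apply the ghost-component criterion for units (using $N^h_{p^d}\equiv 1 \pmod p$), and compute the image in $\Witt_p(\F_p)=\Z_p$ as $\lim_d N^h_{p^d}$; your explicit handling of the height-one claim, via injectivity of the ghost map on $p$-torsion-free rings, fills in a step the paper leaves implicit.
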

\begin{proof}
  From \eqref{lemma:ht-h-artin-hasse} we read off that
  $\gh_{p^d}(\AH_p^h)=N^h_{p^d}$.
  That $\AH_p^h$ is a unit is immediate from
  \eqref{prop:units-in-witt-vectors}, and we use
  \eqref{cor:witt-fp-is-p-padics} and 
  \eqref{lemma:Nhp-limit}   to  
  compute its image in $\Witt_p(\F_p)=\Z_p$. 
\end{proof}

Now we can prove  that, on $p$-local rings, $\Witt_p$ is a summand of
$\Witt_\Z$ as a $\Witt_\Z$-module.
\begin{prop}\label{prop:p-typical-witt-summand}
  Restricted to $\Z_{(p)}$-algebras, the projection $\pi\colon
  \Witt_\Z\ra \Witt_p$ admits a section, which we denote by $j$, which
  is a map of $\Witt_\Z$-modules, and  with
  the property that for all $h\geq1$ the diagram
  \[\xymatrix{
      {\Witt_\Z} \ar@{->>}[d]_{\pi} \ar[r]^{\AH^h_\Z\cdot}
      & {\Witt_\Z} 
      \\
      {\Witt_p} \ar[r]_{\AH^h_p\cdot}^{\approx}
      & {\Witt_p} \ar@{>->}[u]_{j}
    }\]
  commutes.  In particular, $\Witt_p$ is a summand of $\Witt_\Z$,
  split off by the idempotent $\AH_\Z^1$.  
\end{prop}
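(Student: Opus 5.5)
The plan is to define $j$ as ``lift, then multiply by $\AH^1_\Z$''. Concretely, for a $\Z_{(p)}$-algebra $A$ and $b\in\Witt_p(A)$, choose $c\in\Witt_\Z(A)$ with $\pi(c)=b$; such a lift exists since $\Witt_\Z(A)\ra\Witt_p(A)$ is surjective. Then set $j(b)\defeq \AH^1_\Z\cdot c$. The main point is to show that multiplication by $\AH^1_\Z$ on $\Witt_\Z$ factors through $\pi$, so that $j$ is well defined. The remaining properties then follow from ghost-component computations.

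\emph{Step 1: factorization (the main obstacle).} Multiplication by $\AH^1_\Z$ is a natural transformation of $\Witt_\Z$ on $\Z_{(p)}$-algebras. It is therefore corepresented by a ring map $\mu\colon T_\Z\otimes\Z_{(p)}\ra T_\Z\otimes\Z_{(p)}$, and it suffices to show that the image of $\mu$ lies in $T_p\otimes\Z_{(p)}$. I would check this after rationalization. Since $\gh_n$ is evaluation at $w_n$ and ghost components are multiplicative, we have $\mu(w_n)=\gh_n(\AH^1_\Z)\,w_n$. By \eqref{lemma:ht-h-artin-hasse}, $\gh_n(\AH^1_\Z)$ equals $1$ if $n=p^d$ and $0$ otherwise. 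Rationally the $w_n$ generate $T_\Z\otimes\Q$ as a polynomial ring, so the image of $\mu\otimes\Q$ is $\Q[w_{p^d}]=T_p\otimes\Q$. Because $T_\Z$ is torsion free and $T_p$ is a direct summand of $T_\Z$ as an abelian group, we get
\[(T_p\otimes\Q)\cap(T_\Z\otimes\Z_{(p)})=T_p\otimes\Z_{(p)}.\]
Hence $\mu$ factors through a ring map $\nu\colon T_\Z\otimes\Z_{(p)}\ra T_p\otimes\Z_{(p)}$. Precomposition with $\nu$ defines $j\colon \Witt_p\ra\Witt_\Z$ with $j\circ\pi=\AH^1_\Z\cdot(-)$.

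\emph{Step 2: section and module structure.} The composite $T_p\otimes\Z_{(p)}\hookrightarrow T_\Z\otimes\Z_{(p)}\xra{\nu}T_p\otimes\Z_{(p)}$ sends each $w_{p^d}$ to $w_{p^d}$. It is therefore the identity after rationalization, hence the identity, since $T_p$ is torsion free. This gives $\pi j=\id$. For linearity, let $a,c\in\Witt_\Z(A)$; then
\[j(\pi(a)\pi(c))=\AH^1_\Z\,ac=a\,j(\pi(c)),\]
so $j$ is a $\Witt_\Z$-module map, using surjectivity of $\pi$.

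\emph{Step 3: the diagram and the idempotent.} The map $\pi$ is a ring homomorphism with $\pi(\AH^h_\Z)=\AH^h_p$. Hence
\[j(\AH^h_p\,\pi(a))=j\bigl(\pi(\AH^h_\Z a)\bigr)=\AH^1_\Z\AH^h_\Z\,a.\]
It remains to show $\AH^1_\Z\AH^h_\Z=\AH^h_\Z$ in $\Witt_\Z(\Z_{(p)})$. I would check this inside $\Witt_\Z(\Q)$, where the ghost map $\Witt_\Z(\Q)\ra\prod_{m\geq1}\Q$ is injective. By \eqref{lemma:ht-h-artin-hasse}, the ghost components of $\AH^h_\Z$ vanish off $p$th powers, while those of $\AH^1_\Z$ are $1$ on $p$th powers, so the two sides have the same ghost components. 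The same argument shows that $\AH^1_\Z$ is idempotent, since its ghost components lie in $\{0,1\}$, and that it splits off $\Witt_p\approx j(\Witt_p)=\AH^1_\Z\Witt_\Z$. Finally, the bottom arrow $\AH^h_p\cdot$ is an isomorphism by \eqref{prop:ht-h-artin-hasse-in-fp}.
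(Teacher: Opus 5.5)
Your proposal is correct and is essentially the paper's argument: the paper also takes $j$ to be ``lift along $\pi$, then multiply by $\AH^1_\Z$''. It chooses the lift via a ring retraction $r\colon T_\Z\to T_p$ of $\pi^*$ and checks independence of $r$, the section property, $\Witt_\Z$-linearity and the Artin--Hasse identities on Witt-polynomial generators using torsion-freeness; your check that the corepresenting map of $\AH^1_\Z\cdot$ lands in $T_p\otimes\Z_{(p)}$ (via the direct-summand property) is just a choice-free version of the same well-definedness step, and deriving linearity from $j\pi=\AH^1_\Z\cdot$ plus surjectivity of $\pi$ is a harmless shortcut.
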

\begin{proof}
In the following we implicitly assume that all rings are
$p$-localized, so $T_\Z$ is really $T_\Z\otimes \Z_{(p)}$,
etc.  We write $\pi^*\colon T_p\ra T_\Z$ for the inclusion
representing $\pi$, and $[\AH^h_\Z]\colon T_\Z\ra T_\Z$ and
$[\AH^h_p]\colon T_p\ra T_p$ for the maps representing
multiplication by the Artin-Hasse elements.  
Recall that $T_\Z$ and $T_p$ are torsion free and generated
up to torsion by their Witt-polynomial  elements $w_n$, so to verify that
two ring homomorphisms between them (or tensor products of copies of
them) are equal it suffices to evaluate them at Witt-polynomial elements.

Pick any retraction $r\colon T_\Z\ra T_p$ of $\pi^*$ by a
ring homomorphism, and define $j^*\defeq [\AH^1_\Z]\circ r$.
Evaluation at ghost components shows (1) $j^*\circ \pi^*=\id_{T_p}$,
(2) $j^*$ does not depend on the choice of $r$, (3) $j^*$ induces a 
map $j\colon \Witt_p\ra \Witt_\Z$  of $\Witt_\Z$-module schemes, and (4) we have 
identities $[\AH^h_p]\circ j^*= \pi^*\circ [\AH^h_\Z]$.
\end{proof}

\section{The map $\beta\colon \Witt_\Z\ra \Witt_E$}
\label{sec:beta}

In this section we construct a natural ring map $\beta\colon
\Witt_\Z\ra \Witt_E$ and prove the following result on its effect on
ghost components.  Thanks to Nat Stapleton for pointing out how the
constructions of \cite{cdmmrs-total-power-burnside} can be used here.
\begin{prop}\label{prop:beta-on-ghost}
  For all $d\geq0$ and commutative $\O$-algebras $A$ there are
  commutative squares 
  \[\xymatrix{
    {\Witt_\Z(A)} \ar[r]^-{\beta} \ar[d]_{\gh_{p^d}}
    & {\Witt_E(A)} \ar[d]^{\gh_d}
    \\
    {A} \ar[r]_-{a\mapsto 1\otimes a}
    & {\O_d\otimes_\O A}
    }\]
\end{prop}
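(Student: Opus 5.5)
The plan is to build $\beta$ at the level of components, through permutation representations, and then to read off the ghost components in $\O_d$ using the double-coset formula. For a finite group $G$ consider the map $\theta_G\colon A(G)\ra E^0BG$ from the Burnside ring, sending a finite $G$-set $G/H$ to $\Tr_H^G(1)$. I will check three properties of $\theta_G$, each a standard consequence of the push-pull formula for spans of finite covers, exactly as in the proof that $\Witt_E(A)$ is a ring. First, $\theta$ commutes with restriction, using the double-coset formula. Second, it sends induction to transfer. Third, it sends products of $G$-sets to products and external products to $\boxtimes$.

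For symmetric groups, $R\Sigma_m$ is spanned by permutation representations, so the linearization $A(\Sigma_m)\ra R\Sigma_m$ is surjective. The key claim is that $\theta_{\Sigma_m}$ kills its kernel, so that $\theta$ descends to maps $\bar\theta_m\colon R\Sigma_m\ra E^0B\Sigma_m$, still compatible with restriction, induction/transfer, products and $\boxtimes$. Given this, set $\beta(a)_m\defeq(\bar\theta_m\otimes\id_A)(a_m)$. The axioms $a_0=1$ and $\Res a_{i+j}=a_i\boxtimes a_j$ are preserved, and the formulas for $+$ and $\cdot$ in $\Witt_\Z$ and $\Witt_E$ correspond term by term. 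So $\beta$ is a natural ring map; dually, it is induced by a ring map $T\ra T_\Z\otimes\O$.

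The hardest step is the descent through $A(\Sigma_m)\ra R\Sigma_m$. I would attack it with generalized characters. $E^0B\Sigma_m$ is free over $\O$ and embeds into HKR class functions on commuting $h$-tuples of $p$-power elements, and the character of $\Tr_H^G(1)$ at a tuple $\alpha$ is $\len{(G/H)^{\Image\alpha}}$. An element of the kernel has zero fixed-point counts on all cyclic subgroups, but a priori not on all abelian $p$-subgroups. So I must either exploit special features of $\Sigma_m$, or use the constructions of \cite{cdmmrs-total-power-burnside} (the total power operation on Burnside rings) to produce the factorization. If the descent fails outright, the fallback is to replace $R\Sigma_m$ by the image of $A(\Sigma_m)$ in $E^0B\Sigma_m$, define $\beta$ on the Burnside-type Witt vectors, and check that it factors through $\Witt_\Z$ on the part relevant to ghosts.

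For the ghost computation, I compute $\gh_d$ on the generating elements: it suffices to show that for every subgroup $H\leq\Sigma_{p^d}$ the image of $\Tr_H^{\Sigma_{p^d}}(1)$ in $\O_d$ is the integer $\len{(\Sigma_{p^d}/H)^{c}}$, where $c$ is a $p^d$-cycle. By linearity this integer equals $w_{p^d}$ evaluated at the permutation representation, i.e. $\gh_{p^d}$. If $H$ is not transitive on $\ul{p^d}$, then $H$ lies in some $\Sigma_i\times\Sigma_{p^d-i}$ with $0<i<p^d$, so $\Tr_H(1)\in\trJ_{p^d}$ maps to $0$. In this case no conjugate of $H$ contains $c$, so $\len{(\Sigma_{p^d}/H)^{c}}=0$ as well.

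For transitive $H$, I use HKR characters on $\O_d$ itself. By \eqref{prop:o-d-free-rank} and its description via subgroups of order $p^d$, $\O_d$ is free, and after base change its characters are supported on tuples whose image acts freely and transitively on $\ul{p^d}$, i.e. transitive abelian subgroups $B$ of order $p^d$. The value there is $\len{(\Sigma_{p^d}/H)^B}$, which I must show equals $\len{(\Sigma_{p^d}/H)^{c}}$. This is a counting statement about conjugates of $H$ containing a regular abelian $p$-subgroup, and it is the same combinatorics needed in the descent step. Finally, tensoring with $A$ gives the square, and the factor $1\otimes a$ appears because the $\bar\theta_m$ are $\Z$-linear and the scalars from $A$ pass through unchanged.
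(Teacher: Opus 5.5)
There is a genuine gap at the step you flagged as hardest. For $h\geq 2$, $\theta_{\Sigma_m}$ does \emph{not} kill the kernel of linearization $A(\Sigma_m)\ra R\Sigma_m$, so your $\bar\theta_m$ is not well defined.

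Concretely, take $p=2$, $h\geq 2$, $m=4$. Let $V\leq\Sigma_4$ be the normal Klein four-group and $C=\langle(12)(34)\rangle$, and consider the virtual set $x=2[\Sigma_4/V]-3[\Sigma_4/C]+[\Sigma_4/1]$.
\begin{itemize}
\item Its permutation character is zero. At the identity the fixed-point count is $12-36+24=0$, at double transpositions it is $2\cdot 6-3\cdot 4+0=0$, and every other element has no fixed points on any of the three sets. So $x$ lies in the kernel of linearization.
\item Its mark on $V$ is $12$.
\item By the HKR formula you quote, the character of $\theta(x)$ at any surjection $\Z_2^h\ra V$ is therefore $12\neq 0$, so $\theta(x)\neq 0$ in the torsion-free ring $E^0B\Sigma_4$.
\end{itemize}
The same phenomenon breaks your counting claim in the transitive case. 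For $H=V$ one has $|(\Sigma_4/V)^V|=6$ but $|(\Sigma_4/V)^{c}|=0$ for a $4$-cycle $c$. So $\Tr_V^{\Sigma_4}(1)$ does not map to an integer in $\O_2$; its HKR characters are $6$ and $0$ at the two types of index-$4$ subgroups of $\Z_2^h$. Your fallback via Burnside-type Witt vectors does not produce a map out of $\Witt_\Z$ at all, so it cannot establish a square whose top arrow is $\beta\colon\Witt_\Z(A)\ra\Witt_E(A)$.

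The missing idea is to \emph{split} linearization rather than descend along it. The paper works with the subring $\oA(m)\subseteq A(\Sigma_m)$ of submissive $\Sigma_m$-sets, those embedding equivariantly in some $S^{\times m}$. This subring is spanned by the $\Sigma_m/\Sigma_{m_1}\times\cdots\times\Sigma_{m_r}$, and linearization restricts to a ring isomorphism $\oA(m)\ra R\Sigma_m$. It is closed under products, restriction to $\Sigma_i\times\Sigma_j$, and transfer, by \cite{cdmmrs-total-power-burnside} and a short argument for restriction. The map $b_m$ is then your $\theta_{\Sigma_m}$ precomposed with the inverse of this isomorphism.

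With this definition the ghost square needs no character theory:
\begin{itemize}
\item $R\Sigma_{p^d}$ is free on the Young permutation modules $\rho_{\ul{m}}$.
\item $\ker w_{p^d}$ is spanned by the $\rho_{\ul{m}}$ with proper Young subgroup, since a $p^d$-cycle has no fixed points on $\Sigma_{p^d}/\Sigma_{\ul{m}}$ in that case.
\item Each proper Young subgroup lies in some $\Sigma_i\times\Sigma_{p^d-i}$ with $0<i<p^d$, so $b_{p^d}(\rho_{\ul{m}})\in\trJ_{p^d}$.
\item $b_{p^d}(1)=1$.
\end{itemize}
The only transitive subgroup you ever meet is then $\Sigma_{p^d}$ itself, where your counting claim is trivially true.
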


We write $A(G)$ for the Burnside ring of a finite group $G$.  Recall
that there is an evident ``linearization'' map $\ell\colon A(G)\ra
RG$ to the complex representation ring, sending $\ell(S)\defeq \C[S]$.

Following \cite{cdmmrs-total-power-burnside}, we 
say that a $\Sigma_m$-set is \dfn{submissive} if it admits an
equivariant embedding into $S^{\times m}$ (with evident $\Sigma_m$-action) for
some set $S$.  Let $\oA(m)\subseteq A(\Sigma_m)$ denote the subgroup
of the Burnside ring generated by the finite and transitive submissive
$\Sigma_m$-sets.  
\begin{prop}\label{prop:submissive-properties}\forcepar
  \begin{enumerate}
  \item The subgroup $\oA(m)\subseteq A(\Sigma_m)$ is a subring, and is
    exactly the Grothendieck ring of finite submissive
    $\Sigma_m$-sets.
  \item The restriction of $\ell$ to a map $\oA(m)\xra{\ell}
    R\Sigma_m$ 
    is an isomorphism of rings.
  \item   The transfer map $A(\Sigma_i)\otimes A(\Sigma_j)\ra
    A(\Sigma_{i+j})$ restricts to a map $\oA(i)\otimes \oA(j)\ra \oA(i+j)$.
  \item   The restriction map $A(\Sigma_{i+j})\ra A(\Sigma_i\times
    \Sigma_j)$ restricts to a map $\oA(i+j)\ra \oA(i)\otimes \oA(j)$.  
  \end{enumerate}
\end{prop}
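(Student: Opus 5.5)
The plan is to describe submissive $\Sigma_m$-sets concretely and reduce each claim to a statement about isotropy groups. A transitive $\Sigma_m$-set $\Sigma_m/H$ embeds equivariantly in $S^{\times m}$ exactly when $H$ is the stabilizer of some function $f\colon \ul{m}\ra S$. Such stabilizers are the Young subgroups $\Sigma_{\lambda}=\Sigma_{\lambda_1}\times\cdots\times\Sigma_{\lambda_k}$, taken up to conjugacy. So the transitive submissive $\Sigma_m$-sets are precisely the $\Sigma_m/\Sigma_\lambda$, indexed by partitions $\lambda$ of $m$. A general finite $\Sigma_m$-set is submissive iff each of its orbits is: given embeddings of the orbits into $S_i^{\times m}$, embed the disjoint union into $(\coprod S_i)^{\times m}$. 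Hence $\oA(m)$ is the free abelian group on the classes $[\Sigma_m/\Sigma_\lambda]$ and equals the Grothendieck group of submissive sets.

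For (1), closure under products follows because $S^{\times m}\times S'^{\times m}\approx (S\times S')^{\times m}$ equivariantly, so a product of submissive sets is submissive. The unit $\Sigma_m/\Sigma_m$ is the diagonal, so it is submissive as well.

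For (3), induction $\Ind_{\Sigma_i\times\Sigma_j}^{\Sigma_{i+j}}$ of $X\times Y$, with $X\subseteq S^{\times i}$ and $Y\subseteq S'^{\times j}$, embeds in $(S\amalg S')^{\times(i+j)}$. One sends $(\sigma,x,y)$ to the function on $\ul{i+j}$ given by $\sigma$ applied to $(x,y)$. This map is injective because the $S$ versus $S'$ labels recover the coset $\sigma(\Sigma_i\times\Sigma_j)$.

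For (4), the restriction of an orbit $\Sigma_{i+j}/\Sigma_\lambda$ to $\Sigma_i\times\Sigma_j$ is computed by double cosets. Its isotropy groups are intersections $\Sigma_\lambda^g\cap(\Sigma_i\times\Sigma_j)$, and these are Young subgroups of $\Sigma_i\times\Sigma_j$, i.e.\ products $\Sigma_\mu\times\Sigma_\nu$. Equivalently, the restriction of $S^{\times(i+j)}$ is just $S^{\times i}\times S^{\times j}$. One then checks that $\oA(i)\otimes\oA(j)$ equals the subgroup of $A(\Sigma_i\times\Sigma_j)$ spanned by orbits with Young-product isotropy. This holds since those orbits are exactly $\Sigma_i/\Sigma_\mu\times\Sigma_j/\Sigma_\nu$, and the Künneth map on Burnside rings is injective on these basis elements.

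For (2), $\ell$ is a ring map because linearization is multiplicative. It sends $[\Sigma_m/\Sigma_\lambda]$ to the permutation module $M^\lambda=\Ind_{\Sigma_\lambda}^{\Sigma_m}\bbone$. Both $\oA(m)$ and $R\Sigma_m$ are free of rank $p(m)$, the number of partitions of $m$. The classical fact that the $M^\lambda$ form a $\Z$-basis of $R\Sigma_m$ then gives the isomorphism: by Young's rule $M^\lambda=\sum_{\mu\trianglerighteq\lambda}K_{\mu\lambda}S^\mu$, with Kostka numbers $K_{\mu\lambda}$ forming a unitriangular matrix in dominance order, where $S^\mu$ denotes the irreducible (Specht) representation indexed by $\mu$. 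An alternative is to use the isomorphism $T_\Z\approx\Z[s_m]$ from \S\ref{subsec:integral-witt}: dually, the $h_\lambda=\prod h_{\lambda_i}$ form a $\Z$-basis of symmetric functions, and this is the same statement.

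The main obstacle is (2). Its key input, that permutation modules of Young subgroups span $R\Sigma_m$ integrally, is classical, so I would cite it rather than reprove it. Everything else is bookkeeping with orbit stabilizers.
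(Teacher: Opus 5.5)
Your proposal is correct and follows the same route as the paper. For (2), you reduce to the classical fact that the permutation modules $\Ind_{\Sigma_\lambda}^{\Sigma_m}\bbone$ of Young subgroups form a $\Z$-basis of $R\Sigma_m$, exactly as the paper's remark does. For (4), you use the paper's observation that $S^{\times(i+j)}$ restricts to $S^{\times i}\times S^{\times j}$, whose $\Sigma_i\times\Sigma_j$-orbits are products of orbits.

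The only difference is that the paper simply cites the literature for (1) and (3), while you give direct arguments: stabilizers of points of $S^{\times m}$ are Young subgroups, and $\Ind_{\Sigma_i\times\Sigma_j}^{\Sigma_{i+j}}(X\times Y)$ embeds explicitly into $(S\amalg S')^{\times(i+j)}$. You also make explicit the injectivity of $\oA(i)\otimes\oA(j)\to A(\Sigma_i\times\Sigma_j)$, which the paper leaves implicit.
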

\begin{proof}
  (1) is \cite{cdmmrs-total-power-burnside}*{Prop.\ 3.3}.  (2) is
  standard; see \cite{cdmmrs-total-power-burnside}*{\S6} and
  \eqref{rem:rep-symm-basis}.  (3) is 
  \cite{cdmmrs-total-power-burnside}*{Prop.\ 3.5}.

  Statement (4) is
  with respect to $\oA(i)\otimes \oA(j)\ra A(\Sigma_i)\otimes
  A(\Sigma_j)\ra A(\Sigma_{i+j})$, where the second map is the
  external product.  To prove it, it suffices to note that if
  $X\subseteq S^{\times i}\times S^{\times j}$ is a transitive $\Sigma_i\times
  \Sigma_j$-subset, then it is of the form $X=X'\times X''$ for 
  invariant subsets $X'\subseteq S^{\times i}$ and $X''\subseteq
  S^{\times j}$.
\end{proof}

\begin{rem}\label{rem:rep-symm-basis}
  Statement (2) of \eqref{prop:submissive-properties} amounts to the
  fact that, as a free abelian group $R\Sigma_m$ has a basis given by
  the representations $\rho_{\ul{m}}$ induced from the trivial
  representation of subgroups of the form $\Sigma_{m_1}\times \cdots
  \times \Sigma_{m_r}$ \cite{atiyah-power-operations}*{Prop.\ 1.1}.  
\end{rem}

\begin{cor}
  The isomorphism $\oA(m)\approx R\Sigma_m$ of rings of
  \eqref{prop:submissive-properties}(2) is 
  compatible with restriction and transfer along $\Sigma_i\times
  \Sigma_j\ra \Sigma_{i+j}$.  
\end{cor}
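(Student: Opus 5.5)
The plan is to reduce the statement to the naturality of the linearization map $\ell\colon A(G)\ra RG$ with respect to restriction and induction/transfer, and then check it on the additive basis of $\oA(m)$. By \eqref{prop:submissive-properties}(3),(4) the transfer $\oA(i)\otimes\oA(j)\ra\oA(i+j)$ and the restriction $\oA(i+j)\ra\oA(i)\otimes\oA(j)$ are well defined. So the claim amounts to two commutative squares. The first has horizontal maps $\ell\otimes\ell$ and $\ell$, and vertical maps given by transfer on Burnside rings and by $\Ind_{\Sigma_i\times\Sigma_j}^{\Sigma_{i+j}}$ on representation rings. The second is the analogous square with restriction in place of transfer and induction.

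First, I would verify compatibility with restriction. For a finite $G$-set $S$ and subgroup $H\leq G$, the permutation representation $\C[\Res^G_H S]$ is $\Res^G_H\C[S]$. This is tautological: the same vector space carries the restricted action. It remains to reconcile this with the external product identifications. The K\"unneth-type maps $A(\Sigma_i)\otimes A(\Sigma_j)\ra A(\Sigma_i\times\Sigma_j)$ and $R\Sigma_i\otimes R\Sigma_j\xra{\sim}R(\Sigma_i\times\Sigma_j)$ are intertwined by $\ell$, since $\C[X'\times X'']\approx\C[X']\otimes\C[X'']$. Together with the product decomposition $X=X'\times X''$ used in the proof of (4), this gives the restriction square.

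Second, I would verify compatibility with transfer. Transfer in the Burnside ring sends an $H$-set $Y$ to $G\times_H Y$. There is a natural isomorphism $\C[G\times_H Y]\approx \C[G]\otimes_{\C[H]}\C[Y]=\Ind_H^G\C[Y]$, and this, combined again with $\ell$ commuting with external products, yields the transfer square. Both checks are linear, so it suffices to do them on generators, for instance transitive submissive sets, and to extend additively.

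The only step needing care is bookkeeping: making sure the maps in \eqref{prop:submissive-properties}(3),(4) really are the composites through $A(\Sigma_i\times\Sigma_j)$ and the external product, as stated there. Then the compatibility is inherited from $\ell$ on the full Burnside rings. No genuine obstacle is expected; the ring isomorphism of (2) plays no role beyond identifying the target.
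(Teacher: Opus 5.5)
Your proposal is correct and is essentially the argument the paper intends. The paper states this corollary without proof, as an immediate consequence of \eqref{prop:submissive-properties}(3),(4) together with the naturality of the linearization $\ell$ with respect to restriction, induction ($\C[G\times_H Y]\approx\Ind_H^G\C[Y]$), and external products.

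Your extra bookkeeping is sound, and it can be sharpened slightly. The composite $\oA(i)\otimes\oA(j)\ra A(\Sigma_i\times\Sigma_j)\xra{\ell} R(\Sigma_i\times\Sigma_j)$ is the K\"unneth isomorphism precomposed with $\ell\otimes\ell$, so the external product map is injective on $\oA(i)\otimes\oA(j)$. That injectivity is what makes the restriction map of (4) well defined as a lift. Given it, you do not actually need the decomposition $X=X'\times X''$ for the compatibility check.
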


Next we recall the map $A(G)\ra \pi^0 \Sip BG$ from the
Burnside ring to the stable cohomotopy of $BG$.  Explicitly, this
sends a $G$-set $X$ to the composite $\Sip BG=\Sip (*_{hG})\ra \Sip
(X_{hG}) \ra \Sip(*)=\mb{S}$, where the first map in the sequence is
transfer, and the second is projection.

Let $E$ be a Morava $E$-theory, and let $b_m$ denote the composite of the
sequence
\[
R\Sigma_m \xla{\sim} \oA(m) \rightarrowtail A(\Sigma_m)\ra
\pi^0\Sip BG \ra E^0BG,
\]
where the last map is induced by the unit map $\mb{S}\ra E$.

\begin{prop}
The function $\beta(a)\defeq \bigr((b_m\otimes \id_A)(a_m)\bigr)$ defines a ring 
homomorphism $\Witt_\Z(A)\ra \Witt_E(A)$, natural in $\O$-algebras $A$.
\end{prop}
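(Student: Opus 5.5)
We have to show three things: $\beta(a)$ lies in $\Witt_E(A)$, $\beta$ is additive, and $\beta$ is multiplicative. Naturality in $A$ is clear, since $\beta$ is defined componentwise by $b_m\otimes\id_A$.

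The plan is to factor each $b_m$ as a composite of maps, each of which is compatible with the relevant structure: products, restriction along $\Sigma_i\times\Sigma_j\le\Sigma_{i+j}$, and transfer (induction) along the same inclusions. The factorization is
\[
R\Sigma_m\xleftarrow{\ \ell\ }\oA(m)\rightarrowtail A(\Sigma_m)\ra \pi^0\Sip B\Sigma_m\ra E^0B\Sigma_m .
\]

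For the first map, the Corollary after \eqref{prop:submissive-properties} says $\ell^{-1}$ is a ring isomorphism. It intertwines restriction and induction on $R\Sigma_\ast$ with restriction and transfer on $\oA(\ast)$. By \eqref{prop:submissive-properties}(3),(4), these operations preserve the subrings $\oA$, so the inclusion into the Burnside rings is compatible with them as well.

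For the next map, $A(G)\ra\pi^0\Sip BG$ is the Segal map, defined via the Becker--Gottlieb transfer. I need to check four properties.
\begin{itemize}
\item It is a ring map. The product of $G$-sets goes to the product of transfers, by the multiplicativity of transfer for the fiberwise product $(X\times Y)_{hG}\ra BG$.
\item It commutes with restriction. This is the pullback (base-change) property of transfer along $BH\ra BG$.
\item It commutes with external products. Transfers are compatible with products of covering maps.
\item It takes $\Ind_H^G$ on Burnside rings to the stable transfer $\Tr_H^G$. The $G$-set $G\times_H X$ gives the covering $X_{hH}\ra BH\ra BG$, and transfer is functorial for composites of finite covers.
\end{itemize}
Finally, the unit $\mb{S}\ra E$ gives a ring map $\pi^0\Sip BG\ra E^0BG$ commuting with restriction, transfer and external products. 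The $K(n)$-localization implicit in $E^0$ is harmless here, since it is natural. Tensoring with $A$ preserves all of this.

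With the factorization in hand, the three properties follow.
\begin{itemize}
\item Membership in $\Witt_E(A)$. For $a\in\Witt_\Z(A)$ we have $b_0(a_0)=1$, and
\[
\Res(b_{i+j}a_{i+j})=b_{i+j}(\Res a_{i+j})=(b_i\otimes b_j)(a_i\otimes a_j)=b_ia_i\boxtimes b_ja_j,
\]
using compatibility with restriction and external products.
\item Multiplicativity. This is immediate because each $b_m$ is a ring map.
\item Additivity. We compute
\[
b_m\Bigl(\sum_{i+j=m}\Ind(a_i\otimes b'_j)\Bigr)=\sum_{i+j=m}\Tr(b_ia_i\boxtimes b_jb'_j),
\]
using compatibility of $b$ with induction and transfer.
\end{itemize}

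The main obstacle is the compatibility of induction with transfer. This fails for the naive composite $RG\ra E^0BG$ in general, which is why the argument routes through the submissive sets. The Burnside ring is exactly where induction and transfer agree on the nose, and \eqref{prop:submissive-properties}(3) is what guarantees that induction of linearized submissive sets stays inside $\oA$, so the identification with $R\Sigma_m$ respects induction.
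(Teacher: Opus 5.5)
Your proof is correct and takes the same approach as the paper. The paper simply asserts that the $b_m$ are compatible with restriction, transfer and multiplication, and that this is immediate from \eqref{prop:submissive-properties} and the corollary after it. You have spelled out the standard compatibilities of $A(G)\ra\pi^0\Sip BG$ and of the unit $\mb{S}\ra E$ that the paper leaves implicit.

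One small imprecision, which does not affect the argument, is in your closing remark: there is no a priori map $RG\ra E^0BG$ at height $h\geq 2$. The real point is that $\ell\colon A(\Sigma_m)\ra R\Sigma_m$ has a nonzero kernel, so one needs the splitting $\oA(m)$, which is compatible with restriction and induction.
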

\begin{proof}
  The statement refers to functions  $b_m\otimes \id_A\colon
  R\Sigma_m\otimes A\ra E^0B\Sigma_m\otimes_\O A$, which are clearly
  natural in the $\O$-algebra $A$.  We need only to verify that they
  descend to a function $\Witt_\Z(A)\ra \Witt_E(A)$ and is a ring
  homomorphism, i.e., that they are compatible with restriction,
  transfer, and multiplication, which is immediate from
  \eqref{prop:submissive-properties}. 
\end{proof}

\begin{proof}[Proof of \eqref{prop:beta-on-ghost}]
  This is immediate from the the commutativity of the square of ring
  homomorphisms 
  \[\xymatrix{
    {R\Sigma_{p^d}} \ar[r]^{b_{p^d}} \ar[d]_{w_{p^d}}
    & {E^0B\Sigma_{p^d}} \ar[d]
    \\
    {\Z} \ar[r]
    & {E^0B\Sigma_{p^d}/\trJ_d=\O_d}
  }\]
As noted above \eqref{rem:rep-symm-basis}, $R\Sigma_m$ is a free
abelian group on representations 
$\rho_{\ul{m}}$ induced from $\Sigma_{m_1}\times\cdots\times
\Sigma_{m_r}$.  
  Thus the kernel of $w_m\colon
  R\Sigma_m\ra \Z$, which is evaluation of characters at a cycle of
  length $m$, is spanned by the $\rho_{\ul{m}}$ induced from proper
  subgroups.  
  Since $b_{p^d}$ preserves
  transfers, it sends $\ker w_{p^d}$ into the transfer ideal $\trJ_d$.
\end{proof}

\section{The map $\sigma\colon \Witt_E\ra \Witt_\Z$}
\label{sec:sigma}

In this section we describe a natural abelian group homomorphism
$\sigma\colon \Witt_E\ra \Witt_\Z$ and prove the following result on
its effect on ghost components.
Recall that the Strickland rings $\O_d$ are finitely generated free
$\O$-modules of rank $N^h_{p^d}$ \eqref{prop:o-d-free-rank}
\begin{prop}\label{prop:tau-on-ghost}
For all $d\geq0$ and $\O$-algebras $A$ there are commutative squares
  \[\xymatrix{
      {\Witt_E(A)} \ar[r]^-{\sigma} \ar[d]_{\gh_d}
      & {\Witt_\Z(A)} \ar[d]^{\gh_{p^d}}
      \\
      {\O_d\otimes_\O A} \ar[r]_-{\tr}
      & {A}
    }\]
where $\tr(x)$ is the $\O$-linear trace of left multiplication by $x$.
Furthermore we have $\gh_m\circ \sigma=0$ when $m$ is not a $p$th power.
\end{prop}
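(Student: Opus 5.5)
The plan is to define $\sigma$ by duality from $\beta$, and then to compute its effect on ghost components by evaluating both sides after rationalization, using Hopkins--Kuhn--Ravenel character theory. I have not pinned down the normalization of the trace form, and I expect that identification to be the main obstacle.

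\emph{Construction.} The map $\sigma$ should be represented by a grading-preserving $\O$-linear map $\sigma^*\colon T_\Z\otimes\O\ra T$. For this I compose three maps:
\begin{itemize}
\item the isomorphism $R\Sigma_m^\vee\approx R\Sigma_m$ given by the character inner product, under which the irreducibles form an orthonormal basis;
\item the map $b_m\colon R\Sigma_m\ra E^0B\Sigma_m$ of \S\ref{sec:beta};
\item the $K(n)$-local duality isomorphism $E^0B\Sigma_m\approx \Hom_\O(E_0^\vee B\Sigma_m,\O)$, whose dual statement gives $E^0B\Sigma_m\approx T_m$ as in \eqref{prop:T-is-self-dual}.
\end{itemize}
Both dualities interchange restriction and transfer, and $b$ is compatible with both (Corollary to \eqref{prop:submissive-properties}). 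Hence $\sigma^*$ respects the comultiplications, and so $\sigma$ is a natural homomorphism of abelian groups. I would not expect $\sigma$ to be multiplicative, and the statement does not need it.

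\emph{Reduction to a functional computation.} For $a\in\Witt_E(A)$ we have $\gh_{n}(\sigma(a))=a(\sigma^*(w_n))$. Here $\sigma^*(w_n)\in T_n$ is, under duality, the $\O$-linear functional $x\mapsto \epsilon(x\cdot b_n(\hat w_n))$ on $E^0B\Sigma_n$, where:
\begin{itemize}
\item $\hat w_n\in R\Sigma_n$ is the class function dual to $w_n$, namely $\sum_\chi\chi(c_n)\chi=n\cdot\bbone_{\{n\text{-cycles}\}}$;
\item $\epsilon=\Tr^{\Sigma_n}_e$ followed by the identification $E^0(\point)=\O$ is the Frobenius form given by ambidexterity.
\end{itemize}
So both claims of the proposition reduce to one identity of functionals on $E^0B\Sigma_{p^d}$:
\[
\epsilon\bigl(x\cdot b_{p^d}(\hat w_{p^d})\bigr)=\tr_{\O_d/\O}(\bar x),
\]
together with the vanishing of $x\mapsto\epsilon(x\,b_m(\hat w_m))$ when $m$ is not a $p$th power. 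Everything is $\O$-linear on finite free $\O$-modules, so the identity can be checked after base change to the HKR ring $L$. There $E^0BG\otimes L$ becomes $L$-valued class functions on $\Hom(\Z_p^h,G)$, and $\epsilon$ becomes $f\mapsto\frac{1}{|G|}\sum_{\alpha}f(\alpha)$, up to the standard normalization.

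\emph{Key computation.} Under the character map, $b_m$ sends a virtual $\Sigma_m$-set $X$ to $\alpha\mapsto|X^{\Im\alpha}|$. Tracking $\hat w_m$ through the isomorphism $\oA(m)\approx R\Sigma_m$, its image should be supported on tuples $\alpha$ whose image is a transitive abelian $p$-subgroup. Such subgroups exist only when $m=p^d$, which gives $\gh_m\circ\sigma=0$ for $m$ not a $p$th power. When $m=p^d$, the same tuples are the ones indexing $\Spec(\O_d\otimes L)$: these are the subgroups of order $p^d$ in $(\Q_p/\Z_p)^h$, $N^h_{p^d}$ of them. The trace on $\O_d\otimes L\approx\prod L$ is the sum of the coordinates. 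I expect the main obstacle to be pinning down the exact normalization, that is, showing that the weight $|\Cent|$ coming from $\hat w_{p^d}$ cancels the factor $1/|\Sigma_{p^d}|$ in $\epsilon$ orbit by orbit. As a sanity check I would first verify the case $x=1$, where both sides should equal $N^h_{p^d}=\operatorname{rank}\O_d$, consistent with $\gh_{p^d}(\AH^h_\Z)=N^h_{p^d}$ from \eqref{lemma:ht-h-artin-hasse}.
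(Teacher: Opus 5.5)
Your proposal is correct, and it reaches the same character formula as the paper by a different mechanism.

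\textbf{Your $\sigma$ is the paper's $\sigma$.} The paper defines $\sigma$ (just after the statement) by $s(\sigma(a))=\sum_m \Tr^e_{\Sigma_m}(a_m)X^m$. Your composite sends $s_m$ to the trivial representation, then to $1\in E^0B\Sigma_m$, then to the functional $\epsilon=\Tr^e_{\Sigma_m}$. Since $T_\Z$ is polynomial on the $s_m$, the two representing ring maps coincide.

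\textbf{How the routes differ.} The paper does not use $\beta$ at all here. It writes $\Tr^e_{\Sigma_m}(a_m)=\frac{1}{m!}\sum_\gamma \chi_m(\gamma)(a_m)$ by the induced character formula. It then applies the exponential formula for marks on $\Z_p^h$ to get
$s(\sigma(a))=\exp\bigl[\sum_d \frac{X^{p^d}}{p^d}\sum_H \chi_{p^d}(\Z_p^h/H)(a_{p^d})\bigr]$,
and reads off the ghost components from $\sum s_mX^m=\exp[\sum w_nX^n/n]$. Finally it passes from $C$-algebras to general $A$ via faithful flatness and the universal example.

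You instead exhibit $\sigma$ as the transpose of $\beta$ under the character inner product and the ambidexterity form. This makes representability and additivity automatic. It also reduces both claims to identifying the single element $\sigma^*(w_n)\in T_n$, so general $A$ costs nothing. You then replace the exponential formula by a direct computation of the marks of $\ell^{-1}(\hat w_n)\in\oA(n)$.

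Each route buys something different. Yours gives a structural explanation (adjointness of $\sigma$ and $\beta$) and makes the vanishing for $m\neq p^d$ transparent. The paper's is independent of the Burnside-ring machinery, and it reuses the same Wohlfahrt-type identity that produces $\AH^h_\Z$.

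\textbf{The two hedged steps are short.}

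\emph{Support of $b_m(\hat w_m)$.} This step genuinely uses submissiveness. For arbitrary virtual $\Sigma_m$-sets, marks at non-cyclic subgroups are not determined by the linearization.
\begin{enumerate}
\item $\oA(m)$ is spanned by the sets $\Sigma_m/\Sigma_\mu$ for Young subgroups $\Sigma_\mu$.
\item The mark of $\Sigma_m/\Sigma_\mu$ at $K\leq\Sigma_m$ counts ordered set partitions of type $\mu$ whose blocks are unions of $K$-orbits. So it depends only on the orbit partition of $K$.
\item Compare with a cyclic subgroup having the same orbit partition, where the mark equals the character value of $\hat w_m=m\cdot\mathbb{1}_{\{m\text{-cycles}\}}$.
\item Hence the mark of $\ell^{-1}(\hat w_m)$ at $K$ is $m$ if $K$ is transitive and $0$ otherwise.
\end{enumerate}

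\emph{Normalization.} This is orbit--stabilizer.
\begin{enumerate}
\item A transitive abelian $K\leq\Sigma_{p^d}$ is regular and self-centralizing.
\item So each conjugacy class of transitive $\alpha\colon\Z_p^h\to\Sigma_{p^d}$ has $p^d!/p^d$ members.
\item Therefore each class contributes $\frac{1}{p^d!}\cdot\frac{p^d!}{p^d}\cdot p^d=1$ times $\chi_{p^d}(\Z_p^h/H)(x)$.
\end{enumerate}
Your formula for $\epsilon$ is exactly the paper's induced character formula with target the trivial group, so no further normalization needs to be pinned down.
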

The map $\sigma$ was originally  constructed in
\cite{bssw-rational-kn}*{\S2.5}.

Let $\Tr^e_{\sigma_m}\colon E^0B\Sigma_m\ra \O$ denote the
$K(n)$-local transfer along $\Sigma_m\ra e$ (defined due to the
1-semiadditivity of $\Sp_{K(h)}$).  
Recall the isomorphism $s\colon
\Witt_\Z(A)\xra{\sim}(1+X\,A\powser{X})^\times$ of abelian groups
\eqref{subsec:integral-witt}. 
\begin{prop}
  There is a natural abelian group homomorphism
  \[
  \sigma\colon \Witt_E(A) \ra \Witt_\Z(A)  
\]
characterized by the formula
\[
s(\sigma(a)) = \sum_{m\geq0} \Tr^e_{\Sigma_m}(a_m)X^m.
\]
\end{prop}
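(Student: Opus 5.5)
Since $s$ is an isomorphism of abelian groups $\Witt_\Z(A)\xra{\sim}(1+X\,A\powser{X})^\times$, the first step is to define $\sigma$ by the displayed formula. This means checking that the right-hand side is a power series with constant term $1$ and coefficients in $A$. It then suffices to show that $a\mapsto \sum_m \Tr^e_{\Sigma_m}(a_m)X^m$ turns addition in $\Witt_E(A)$ into multiplication of power series. Here $\Tr^e_{\Sigma_m}\otimes\id_A\colon E^0B\Sigma_m\otimes_\O A\ra A$ is $A$-linear, and it is natural in $A$ because it is the base change of an $\O$-linear map. For the constant term: $a_0=1\in E^0B\Sigma_0=\O$, and the transfer along the identity $\Sigma_0\ra e$ is the identity, so the constant term is $1$.

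The key step is the homomorphism property. For $a,b\in\Witt_E(A)$ we have
\[
\Tr^e_{\Sigma_m}\bigl((a+b)_m\bigr) = \sum_{i+j=m}\Tr^e_{\Sigma_m}\Tr^{\Sigma_m}_{\Sigma_i\times\Sigma_j}(a_i\boxtimes b_j) = \sum_{i+j=m}\Tr^e_{\Sigma_i\times\Sigma_j}(a_i\boxtimes b_j).
\]
This uses transitivity of transfers, which holds for the $K(n)$-local (ambidextrous) transfers of \cite{strickland-kn-local-duality-finite-groups}. Next, the $K(n)$-local transfer along $\Sigma_i\times\Sigma_j\ra e$ is compatible with the K\"unneth isomorphism:
\[
\Tr^e_{\Sigma_i\times\Sigma_j}(x\boxtimes y) = \Tr^e_{\Sigma_i}(x)\,\Tr^e_{\Sigma_j}(y).
\]
This holds because the norm/transfer for a product of groups is the smash product of the transfers, i.e.\ the dualizing equivalence $\Sip B(G\times H)\approx \mc F(\Sip B(G\times H),\mb S_{K(n)})$ is symmetric monoidal in the group. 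Together these identities give $\sum_m\Tr^e(a+b)_mX^m = \bigl(\sum_i \Tr^e(a_i)X^i\bigr)\bigl(\sum_j\Tr^e(b_j)X^j\bigr)$, so $\sigma$ is additive.

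The main obstacle is justifying the multiplicativity of the $K(n)$-local transfer under external products, including after tensoring with $A$. I would first prove it universally, at the level of $E$-module spectra, by citing the monoidality of ambidextrous norms. Tensoring with $A$ is harmless, because the $E^0B\Sigma_m$ are finitely generated free modules and all the maps involved are $\O$-linear. Naturality of $\sigma$ in $A$ is then immediate, and $\sigma$ is uniquely characterized by the formula because $s$ is a bijection.
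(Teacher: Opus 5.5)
Your proposal is correct and matches the paper's proof: the paper likewise gets constant term $1$ from $a_0=1$, and gets additivity from transitivity of transfers, $\Tr^e_{\Sigma_{i+j}}\Tr^{\Sigma_{i+j}}_{\Sigma_i\times\Sigma_j}=\Tr^e_{\Sigma_i\times\Sigma_j}$, together with $\Tr^e_{\Sigma_i\times\Sigma_j}(x\boxtimes y)=\Tr^e_{\Sigma_i}(x)\,\Tr^e_{\Sigma_j}(y)$. Your extra justifications, namely monoidality of the ambidextrous transfer and compatibility of the identities with base change to $A$, only spell out details the paper leaves implicit.
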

\begin{proof}
Elements $a\in \Witt_E(A)$ satisfy $a_0=1$, so the right-hand side
lies in the series with constant term 1. 
That it is a homomorphism of abelian groups follows from functoriality
of the transfer:
\begin{align*}
  \Tr_{\Sigma_{i+j}}^e \Tr_{\Sigma_i\times \Sigma_j}^{\Sigma_{i+j}}(a_i\otimes
  a_j) = \Tr^{e}_{\Sigma_i\times \Sigma_j}(a_i\otimes a_j) =
  \Tr_{\Sigma_i}^e(a_i)\Tr_{\Sigma_j}^e(a_j). 
\end{align*}
\end{proof}

\subsection{Exponential formula for marks}

Say that a group $G$ is \dfn{admissible} if it has only finitely many
subgroups of index $m$ for any $m\geq 1$.  For our purposes, the only
admissible groups we need to consider are $G=\Z$ and $G=\Z_p^h$.

A \dfn{mark} from an admissible group $G$ to a ring $R$ is a function $f$ which
sends a finite $G$-set $S$ to an element $f(S)\in R$, such that
\begin{enumerate}
\item [(i)] $f$ is isomorphism invariant: $S\approx T$ as $G$-sets
  implies $f(S)=f(T)$;
  \item [(ii)] $f$ is multiplicative: $f(\varnothing)=1$ and
    $f(S\amalg T)=f(S)f(T)$.  
  \end{enumerate}
When $\gamma\colon G\ra \Sigma_m$ is
homomorphism to a symmetric group, we abuse notation and write
$f(\gamma)$ for the value  
of $f$ on the evident $\Sigma_m$-set.  We will need the following
exponential identity for marks.
\begin{prop}\label{prop:exponential-identity-marks}
  For any mark $f$ on $G$ with values in a torsion free ring $R$, we have
  \[
    \sum_{m\geq0} \frac{X^m}{m!}\; \sum_{\gamma\colon G\ra \Sigma_m}
    f(\gamma) = \exp\left[ \sum_{H\leq G} \frac{X^{[G:H]}}{[G:H]} f(G/H)\right],
  \]
  where the sum on the right is taken over all subgroups of finite index.
\end{prop}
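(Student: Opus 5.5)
The plan is the standard labeled-structures argument, done as an identity of formal power series with coefficients in $R\otimes\Q$. Torsion-freeness of $R$ means $R\to R\otimes\Q$ is injective, so we may divide by $m!$ and by indices there and then check coefficients. Admissibility of $G$ makes both sides well-defined: each coefficient of $X^m$ on the right involves only finitely many subgroups. It also makes each $\Hom(G,\Sigma_m)$ finite, since a homomorphism $G\ra\Sigma_m$ factors through $G/N$, where $N$ is the intersection of the stabilizers, each of index $\le m$.

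First, reinterpret the left side. A homomorphism $\gamma\colon G\ra \Sigma_m$ is the same as a $G$-action on the labeled set $\ul{m}=\{1,\dots,m\}$, so
\[
\frac{1}{m!}\sum_{\gamma}f(\gamma)
\]
is the sum over isomorphism classes $[S]$ of $G$-sets of size $m$ of $f(S)/\len{\Aut_G(S)}$, by orbit–stabilizer for the $\Sigma_m$-action on labelings. Thus the left side is $\sum_{[S]} f(S)\,X^{\len{S}}/\len{\Aut_G(S)}$, summed over all finite $G$-sets up to isomorphism.

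Next, decompose $S$ into orbits. Transitive finite $G$-sets are $G/H$ with $H$ of finite index, and $G/H\approx G/K$ iff $H,K$ are conjugate. Write $S\approx\coprod_{[H]}(G/H)^{\amalg k_H}$. Then multiplicativity gives $f(S)=\prod_{[H]} f(G/H)^{k_H}$, and
\[
\len{\Aut_G(S)}=\prod_{[H]} k_H!\,\len{\Aut_G(G/H)}^{k_H}, \qquad \Aut_G(G/H)\approx N_G(H)/H.
\]
So the generating function factors as a product over conjugacy classes $[H]$ of $\exp\bigl[f(G/H)X^{[G:H]}/\len{N_G(H)/H}\bigr]$, which is the exponential of the sum of the exponents. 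To turn this into a sum over subgroups, use that the class of $H$ has $[G:N_G(H)]$ members, all with the same index and the same value of $f$. Hence
\[
\frac{1}{\len{N_G(H)/H}}=\frac{[G:N_G(H)]}{[G:H]},
\]
and summing over the conjugacy class yields $\sum_{K\sim H} X^{[G:K]}f(G/K)/[G:K]$, which is the claimed right side.

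The main subtlety is bookkeeping rather than depth. One must justify infinite products and sums: for each fixed $m$ only finitely many terms contribute, by admissibility. One must also take care that the automorphism-count formula uses $N_G(H)$ correctly when $G$ is nonabelian. For the cases needed, $G=\Z$ and $G=\Z_p^h$, the group is abelian, so $N_G(H)=G$ and conjugacy classes are singletons, and this step is trivial.
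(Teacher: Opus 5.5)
Your proposal is correct and follows essentially the same route as the paper: rewrite the left side as $\sum_{[S]} f(S)X^{|S|}/|\Aut(S)|$ via orbit–stabilizer, then factor it as a product of exponentials using the decomposition $\Aut(S)\approx\prod_j \Aut(T_j)^{k_j}\rtimes\Sigma_{k_j}$ of $\Aut(S)$ for $S$ a disjoint union of orbits. You also make explicit two steps the paper leaves implicit: the passage from transitive $G$-sets to subgroups via $|\Aut(G/H)|=[N_G(H):H]$ together with the size of the conjugacy class, and the finiteness and convergence bookkeeping coming from admissibility.
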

\begin{proof}
We can rewrite both sides of the identity in terms of isomorphism classes
of finite $G$-sets.  That is, it suffices to show
\[
  \sum_{[S]} \frac{X^{\len{S}}}{|\Aut(S)|}\, f(S) = \prod_{\text{tr.\
      $[T]$}} \exp\left[ \frac{X^{|T|}}{|\Aut(T)|}\, f(T) \right]
\]
where the sum is over all isomorphism classes of finite $G$-sets,
while the product is over all isomorphism classes of \emph{transitive}
finite $G$-sets.  The equality then follows using the observation that
if $S\approx T_1^{\amalg k_1}\amalg\cdots \amalg T_r^{\amalg k_r}$
where the $T_j$ are pairwise-non-isomorphic transitive $G$-sets, then
$\Aut(S)\approx\prod_{j=1}^r \Aut(T_j)^{k_j}\rtimes \Sigma_{k_j}$.  
\end{proof}
\begin{rem}\label{rem:wohlfahrt-identity}
  When $f\equiv 1$ this specializes to  Wohlfarht's identity.
\end{rem}

\subsection{Hopkins-Kuhn-Ravenel generalized character theory}

We recall basic facts about the HKR generalized character map
\cite{hopkins-kuhn-ravenel}, which we formulate 
as follows.  

There exists a
faithfully flat 
$p^{-1}\O$-algebra $C$  and for each homomorphism $\gamma\colon
\Z_p^h\ra G$ an $\O$-algebra map
\[
\chi_G(\gamma)\colon   E^0BG \ra E^0BG\otimes_\O C.
\]
The maps $\chi_G(\gamma)$ only depend on the conjugacy class of
$\gamma$, and they give rise to an isomorphism of $C$-algebras
\[
\chi_G\colon E^0BG\otimes_\O C \xra{\sim} \prod_{\Hom(\Z_p^h,G)_{\cnj}}
C,
\]
the product being taken over conjugacy classes of homomorphisms.

For our purposes, it is useful to formulate this in terms of base-change to an
arbitrary $C$-algebra.  Thus, we regard character theory as giving,
naturally in $C$-algebras $A$,   an isomorphism of $A$-algebras
of the form
\[
  \chi_G\colon E^0BG\otimes_\O A\xra{\sim} \prod_{\Hom(\Z_p^h,G)_\cnj}
  A,
\]
and we write $\chi_G(\gamma)\colon E^0BG\otimes_\O A\ra A$ for its
component at $\gamma\colon \Z_p^h\ra G$.

We note the formula for ``induced characters'', associated to
transfers along general homomorphisms (which exist because of the
1-semiadditivity of the $K(n)$-local category).
\begin{prop}\label{prop:induced-character}
Let $\phi\colon H\ra G$ be a homomorphism
  of finite groups, and write $\Tr_\phi\colon E^0BH\otimes_\O A\ra
  E^0BG\otimes_\O A$ for the
  induced transfer.  When $A$ is a $C$-algebra, we have for
  $\gamma\colon \Z_p^h\ra G$ the identity
  \[
  \chi_G(\gamma)\circ \Tr_\phi = \frac{1}{\len{H}}
  \sum_{\substack{\delta\colon \Z_p^h\ra H \\ u\in G,\; \phi\delta
      =u^{-1}\gamma u}} \chi_H(\delta).
\]
\end{prop}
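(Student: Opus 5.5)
We prove the induced character formula by factoring $\phi$ into a surjection followed by an injection. Transfers are functorial: $\Tr_{\psi\phi}=\Tr_\psi\Tr_\phi$, which follows from 1-semiadditivity. The right-hand side of the formula is also compatible with composition, by a counting argument. So it suffices to treat two cases: $\phi$ the inclusion of a subgroup, and $\phi$ a surjection $H\ra H/K$.

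\textbf{Compatibility of the right-hand side with composition.} Write the right-hand side as an operator $\Phi_\phi$. For a composite $H\xra{\phi}G\xra{\psi}L$ we must show $\Phi_\psi\circ\Phi_\phi=\Phi_{\psi\phi}$. Expanding, the coefficient of $\chi_H(\delta)$ in $\Phi_\psi\Phi_\phi$ at $\gamma\colon\Z_p^h\ra L$ is
\[
\frac{1}{\len{G}\len{H}}\,\bigl|\{(\epsilon,v,u)\;:\;\psi\epsilon=v^{-1}\gamma v,\ \phi\delta=u^{-1}\epsilon u\}\bigr|,
\]
where $\epsilon\colon\Z_p^h\ra G$, $v\in L$ and $u\in G$. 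Given $u$ and $\delta$, the element $\epsilon=u\,\phi\delta\,u^{-1}$ is determined. The remaining condition becomes $\psi\phi\delta=(v\psi u)^{-1}\gamma(v\psi u)$. The map $(u,v)\mapsto v\psi(u)$ is $\len{G}$-to-one onto $L$. Hence the count is $\len{G}\cdot|\{w\in L:\psi\phi\delta=w^{-1}\gamma w\}|$, which yields $\Phi_{\psi\phi}$.

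\textbf{Injective case.} This is the HKR formula for induced characters \cite{hopkins-kuhn-ravenel}*{Thm.\ D}. There it reads $\chi_G(\gamma)\Tr_H^G(x)=\sum_{uH\in G/H,\ u^{-1}\gamma u\in H}\chi_H(u^{-1}\gamma u)(x)$. Rewriting the sum over $u\in G$ and dividing by $\len{H}$ gives exactly our formula with $\delta=u^{-1}\gamma u$.

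\textbf{Surjective case $H\ra H/K=G$.} Here the $K(n)$-local transfer is the composite of restriction-free pieces. We factor $H\ra G$ further as $H\hookrightarrow H\times G\ra G$, via $h\mapsto(h,\phi h)$ followed by projection. The injective step is already handled, so we reduce to a projection $\pi\colon G\times K'\ra G$. Its transfer is $x\boxtimes y\mapsto x\cdot\Tr^e_{K'}(y)$ under K\"unneth, so we reduce to $K'\ra e$. The needed identity is then
\[
\tr\circ\Tr^e_{K'}=\frac{1}{\len{K'}}\sum_{\delta\colon\Z_p^h\ra K'}\chi_{K'}(\delta),
\]
that is, the character-theoretic computation of the $K(n)$-local transfer to a point. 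The formula is multiplicative under products, and $\chi$ is multiplicative under K\"unneth, so the product case follows.

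\textbf{Main obstacle: the transfer to a point.} I would prove the formula for $\Tr^e_{K'}$ as follows. The ambidexterity transfer $E^0BK'\ra\O$ is characterized, via the self-duality of $E^0BK'$ \cite{strickland-kn-local-duality-finite-groups}, by the Frobenius form. In characters this form must be a $C$-linear functional that is invariant in the appropriate sense. It is pinned down by its value on $1$, namely $\Tr^e(1)=\len{\Hom(\Z_p^h,K')}/\len{K'}$, the chromatic cardinality \cite{benmoshe-higher-semiadditivity-transchromatic}. It is also pinned down by compatibility with inclusions $L\le K'$: writing $\Tr^e_{K'}\Tr^{K'}_L=\Tr^e_L$ and using induction on $\len{K'}$ together with the injective case, the functional is determined on the span of transfers from proper subgroups. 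The one remaining piece is the component at surjective $\delta$; I would fix it using the value of $\Tr^e(1)$. If this bootstrap proves awkward, I would instead cite \cite{hopkins-kuhn-ravenel} directly, or the transchromatic character literature, for the formula for the transfer along $K'\ra e$.
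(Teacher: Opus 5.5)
Your reduction is correct, and it takes a different route from the paper, which does not prove \eqref{prop:induced-character}. The paper cites Ben-Moshe's main theorem for the general statement. It then remarks that only two special cases are used later: $\phi$ injective (HKR, Theorem~D) and $G$ trivial (Ganter, Prop.~7.9).

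You show that the general case \emph{follows} from exactly these two. The following steps are all sound:
\begin{itemize}
\item the graph factorization $H\ra H\times G\ra G$;
\item the identification of the transfer along the projection with $\id\otimes\Tr^e$ under K\"unneth;
\item multiplicativity of characters;
\item your count showing that the right-hand side is compatible with composition, using that $(u,v)\mapsto v\psi(u)$ is $\len{G}$-to-one.
\end{itemize}
This frees the general case from the transchromatic machinery, but the case $K'\ra e$ must still be supplied. Your preliminary split into a surjection followed by an injection is unnecessary, since the graph factorization already handles an arbitrary $\phi$.

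The self-contained bootstrap you sketch for $K'\ra e$ does not close as written. Write $t_\delta=\Tr^e_{K'}(e_\delta)$ for the value on the character idempotent. The induction on $\len{K'}$ together with the injective case determines $t_\delta$ for every non-surjective $\delta$. However, there is not ``one remaining piece.'' A surjection $\Z_p^h\ra K'$ forces $K'$ to be abelian, so each surjection is its own conjugacy class, and there are usually many of them: $p^h-1$ for $K'=\Z/p$. The value of $\Tr^e(1)$ gives only one linear relation among these unknowns. Invariance under $\Aut(K')$ reduces them to one unknown per kernel, which is still several once $h\geq 2$.

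So you need one more input. One way to close it uses the diagonal $\Delta\colon K'\ra K'\times K'$:
\begin{itemize}
\item $\Delta$ is an inclusion, so HKR applies and gives $\Tr_\Delta(e_\delta)=\len{K'}\,e_\delta\otimes e_\delta$.
\item By your projection argument, $\Tr^e_{K'\times K'}=\Tr^e_{K'}\otimes\Tr^e_{K'}$.
\item Hence $\Tr^e_{K'\times K'}\circ\Tr_\Delta=\Tr^e_{K'}$ yields $t_\delta=\len{K'}\,t_\delta^2$, so $\len{K'}t_\delta$ is an idempotent of $C$.
\item If $C$ has no nontrivial idempotents (e.g.\ HKR's ring, a domain), then $t_\delta\in\{0,1/\len{K'}\}$.
\item The count coming from $\Tr^e(1)=\len{\Hom(\Z_p^h,K')}/\len{K'}$ then forces every $t_\delta=1/\len{K'}$.
\end{itemize}

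Otherwise you must cite, but HKR's Theorem~D treats only subgroup inclusions. The correct reference for $K'\ra e$ is Ganter's Prop.~7.9, as in the paper. Also, if you invoke Ben-Moshe to get the value of $\Tr^e(1)$, his main theorem already gives the whole proposition, which is exactly the paper's route.
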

This should be  well-known.  For instance, it can be read off from a
special case of the main theorem of
\cite{benmoshe-higher-semiadditivity-transchromatic}.  In fact, we 
only need  two special cases: (1) when $\phi$ is
injective \cite{hopkins-kuhn-ravenel}*{Theorem D}; (2) when $G$ is trivial
\cite{ganter-orb-genera}*{Prop.\ 7.9}.

\subsection{Generalized characters for symmetric groups}

As we are mainly  interested in the case of $G=\Sigma_m$,  we write
$\chi_m$ for $\chi_{\Sigma_m}$.  Note that conjugacy
classes of homomorphisms $\gamma\colon \Z_p^h\ra \Sigma_m$ correspond
exactly to isomorphism classes finite sets $S$ of order $m$ equipped with a
$\Z_p^h$-action. Thus in this case we abuse 
notation, and write $\chi_m(S)$ for $\chi_m(\gamma)$.
In particular, a subgroup $H\leq \Z_p^h$ of index $p^d$ determines a
$\Z_p^h$-set $\Z_p^h/H$, and thus a conjugacy class of homomorphisms
$\Z_p^h\ra \Sigma_{p^d}$  with
associated character map $\chi_{p^d}(\Z_p^h/H)$.

\begin{prop}\label{prop:character-formula-for-tau}
  Let $A$ be a $C$-algebra.  Then for $a\in \Witt_E(A)$ we have
  \[
  s(\sigma(a)) =
  \sum_{m\geq0} \frac{X^m}{m!} \sum_{\gamma\colon \Z_p^h\ra \Sigma_m}
  \chi_m(\gamma)(a_m) = 
\exp\biggl[ \sum_{d\geq0} \frac{X^{p^d}}{p^d}
  \sum_{\substack{ H\leq \Z_p^h \\ [\Z_p^h:H]=p^d}}
    \chi_{p^d}(\Z_p^h/H)(a_{p^d}) \biggr].
  \]
\end{prop}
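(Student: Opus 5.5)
The plan is to prove the two equalities separately. The first follows from the induced character formula applied to the transfer $\Sigma_m\to e$. The second follows from the exponential identity for marks, applied to the function $S\mapsto \chi_{|S|}(S)(a_{|S|})$ on finite $\Z_p^h$-sets.

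\emph{First equality.} By definition $s(\sigma(a))=\sum_m \Tr^e_{\Sigma_m}(a_m)X^m$. Apply \eqref{prop:induced-character} with $\phi\colon\Sigma_m\to e$, $G=e$ and $\gamma$ trivial. The conditions on $u$ are vacuous and $u$ ranges over the one-element group, so
\[
\Tr^e_{\Sigma_m}(a_m)=\frac{1}{m!}\sum_{\gamma\colon\Z_p^h\to\Sigma_m}\chi_m(\gamma)(a_m)
\]
in $A$. Here we regard $\O\to A$ via the $C$-algebra structure, and the transfer commutes with base change. This gives the middle expression.

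\emph{Second equality.} For a finite $\Z_p^h$-set $S$ of size $m$, set $f(S)\defeq\chi_m(S)(a_m)\in A$; this is well defined because $\chi_m(\gamma)$ depends only on the conjugacy class of $\gamma$. Isomorphism invariance is then immediate, and $f(\varnothing)=1$ because $a_0=1$.

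The main point is multiplicativity. Suppose $S\amalg T$ corresponds to $\gamma\colon\Z_p^h\to\Sigma_{i+j}$, factoring through $\Sigma_i\times\Sigma_j$ as $(\gamma_S,\gamma_T)$. Naturality of HKR characters under restriction gives
\[
\chi_{i+j}(\gamma)(a_{i+j})=\chi_{\Sigma_i\times\Sigma_j}(\gamma_S,\gamma_T)\bigl(a_{i+j}|_{\Sigma_i\times\Sigma_j}\bigr).
\]
This naturality is the injective-$\phi$ case of \eqref{prop:induced-character} in its dual restriction form, and it also holds directly since $\chi_G(\gamma)$ is induced by the map $B\Z_p^h\to BG$. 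The Witt condition gives $a_{i+j}|_{\Sigma_i\times\Sigma_j}=a_i\boxtimes a_j$. Characters of a product group are compatible with the K\"unneth isomorphism, so this equals $\chi_i(\gamma_S)(a_i)\,\chi_j(\gamma_T)(a_j)=f(S)f(T)$.

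With $f$ shown to be a mark, \eqref{prop:exponential-identity-marks} for $G=\Z_p^h$ converts the middle expression into
\[
\exp\Bigl[\sum_{H}\tfrac{X^{[\Z_p^h:H]}}{[\Z_p^h:H]}\,f(\Z_p^h/H)\Bigr].
\]
Finite-index subgroups of $\Z_p^h$ have index $p^d$, so this sum regroups by $d$ into the right-hand side.

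One technical caveat is that \eqref{prop:exponential-identity-marks} asks for a torsion-free target ring, while $A$ is an arbitrary $C$-algebra. I would handle this in one of two ways. The first option is to note that $A$ is a $\Q$-algebra, since $C$ is a $p^{-1}\O$-algebra, and $\O$ is a $\Z_p$-algebra so all other primes are already invertible. The combinatorial proof of the identity only uses division by integers, so it goes through over any $\Q$-algebra. The second option is to reduce to the universal case $A=T\otimes_\O C$, which is torsion free because $T$ is free over $\O$ and $C$ is rational. I expect the main obstacle to be this bookkeeping, together with justifying compatibility of HKR characters with restriction and K\"unneth, rather than anything conceptual.
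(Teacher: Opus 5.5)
Your proposal is correct and follows the paper's proof: the first equality comes from the induced character formula with $G=e$, and the second from checking that $S\mapsto\chi_{|S|}(S)(a_{|S|})$ is a mark on $\Z_p^h$ and applying \eqref{prop:exponential-identity-marks}; you also fill in the mark verification (naturality of characters under restriction, the Witt condition, K\"unneth) that the paper leaves implicit. Your torsion-free caveat is harmless but unnecessary, since $A$ is a $\Q$-algebra and hence already torsion free.
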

\begin{proof}
  The first equality is immediate from the induced character formula
  \eqref{prop:induced-character}.  To prove the
  second equality, note that the function $f(\gamma)\defeq
  \chi_m(\gamma)(a_m)$ defines a mark on $\Z_p^h$ with values in $A$,
  so the exponential identity \eqref{prop:exponential-identity-marks}
  applies. 
\end{proof}

\begin{prop}\label{prop:character-ghost}
  For $C$-algebras $A$ there is a commutative square of $C$-algebra
  homomorphisms 
\[\xymatrix@C=50pt{ 
    {E^0B\Sigma_{p^d}\otimes_{\O} A} \ar[r] \ar[d]_{\chi_{p^d}}^{\approx}
    & {\O_d\otimes_\O A} \ar[d]^{\ol\chi_{p^d}}_{\approx}
    \\
    {\hspace{-1em} \prod_{\Hom(\Z_p^h,\Sigma_{p^d})_\cnj} \hspace{-2em} A} \ar[r]
  & {\prod_{\substack{H\leq \Z_p^h \\ [\Z_p^h:H]=p^d}}
    \hspace{-1em} A} 
}\]
in which the bottom horizontal map is the evident projection, and the
vertical maps are isomorphisms.
\end{prop}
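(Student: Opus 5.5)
The plan is to identify which conjugacy classes of $\gamma\colon\Z_p^h\ra\Sigma_{p^d}$ survive modulo the transfer ideal, and then count. Conjugacy classes of homomorphisms $\Z_p^h\ra\Sigma_{p^d}$ correspond to isomorphism classes of $\Z_p^h$-sets $S$ of order $p^d$. These split into two kinds: transitive ones $S\approx\Z_p^h/H$ with $[\Z_p^h:H]=p^d$, and non-transitive ones. The subgroups $H$ are in bijection with subgroups of order $p^d$ in $(\Q_p/\Z_p)^h$ by Pontryagin duality, so the bottom-right product has $N^h_{p^d}$ factors. The goal is to show that the kernel of the $A$-algebra map
\[
E^0B\Sigma_{p^d}\otimes_\O A\xra{\chi_{p^d}}\prod_{\Hom(\Z_p^h,\Sigma_{p^d})_\cnj}A\ra\prod_{H}A
\]
is exactly $\trJ_{p^d}\otimes_\O A$. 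Then $\ol\chi_{p^d}$ exists, makes the square commute, and is an isomorphism, since the bottom map is surjective and $\chi_{p^d}$ is an isomorphism.

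The first step is to compute characters of transfers using the induced character formula \eqref{prop:induced-character} with $\phi\colon\Sigma_i\times\Sigma_{p^d-i}\hookrightarrow\Sigma_{p^d}$, $0<i<p^d$. Evaluating $\chi_{p^d}(\gamma)\circ\Tr_\phi$ gives a sum over $\delta\colon\Z_p^h\ra\Sigma_i\times\Sigma_{p^d-i}$ conjugate to $\gamma$. Such a $\delta$ exists only if the $\Z_p^h$-set $S_\gamma$ decomposes as a disjoint union of invariant pieces of sizes $i$ and $p^d-i$. For transitive $S$ this is impossible, so every character $\chi_{p^d}(\Z_p^h/H)$ kills $\trJ_{p^d}$. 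Hence the composite to $\prod_H A$ factors through $\O_d\otimes_\O A$, which gives existence and commutativity of $\ol\chi_{p^d}$. The same map is surjective because the bottom projection is.

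The second step is injectivity. I expect this to be the main obstacle, because it needs the transfer ideal to be large enough. I would first try to avoid any explicit construction and use a rank count instead. By \eqref{prop:o-d-free-rank}, $\O_d$ is free of rank $N^h_{p^d}$ over $\O$. So $\O_d\otimes_\O C$ is free of rank $N^h_{p^d}$ over $C$, and $\ol\chi_{p^d}$ is then a surjective $C$-linear map between free $C$-modules of the same finite rank. Such a map is an isomorphism, since a surjective endomorphism of a finitely generated module is injective. This settles the case $A=C$, and general $C$-algebras $A$ follow by base change, since both sides are compatible with $-\otimes_C A$.

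If I wanted the more conceptual statement that $\trJ_{p^d}\otimes A$ equals the kernel, I would argue as follows. For a non-transitive $S=S'\amalg S''$ with $|S'|=i$, the idempotent of $\prod A$ at $[S]$ lies in the image of $\Tr_\phi$. This follows from \eqref{prop:induced-character}: multiplying a transfer by restriction-invariant idempotents isolates the class, and the coefficient is a ratio of automorphism-group orders, which is invertible since $A$ is a $p^{-1}\O$-algebra. The rank argument makes this unnecessary, so I would rely on it.
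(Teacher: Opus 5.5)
Your proof is correct. Its first step is the paper's argument: the induced character formula \eqref{prop:induced-character} gives $\chi_{p^d}(\Z_p^h/H)\circ\Tr_\phi=0$ for each inclusion $\phi\colon\Sigma_i\times\Sigma_{p^d-i}\hookrightarrow\Sigma_{p^d}$ with $0<i<p^d$, because no conjugate of a transitive $\gamma$ lands in $\Sigma_i\times\Sigma_{p^d-i}$. This is what makes $\ol\chi_{p^d}$ exist and the square commute. The paper's wording at this point says ``non-transitive'', but the vanishing actually needed, and the one you prove, is at the transitive classes.

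The paper's proof stops there. It never argues that $\ol\chi_{p^d}$ is an isomorphism. Your argument supplies that step: surjectivity comes from $\chi_{p^d}$ and the bottom projection, and then you count ranks using \eqref{prop:o-d-free-rank} and the Pontryagin-duality identification of the number of index-$p^d$ subgroups with $N^h_{p^d}$. You can even run the rank count directly over $A$, since $\O_d\otimes_\O A$ is already free of rank $N^h_{p^d}$ over $A$; the base-change step is then unnecessary.

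Your alternative idempotent argument also works, and it avoids Strickland's rank theorem. The ring $C$ is a $\Q$-algebra, and $\chi_{p^d}(S)\bigl(\Tr_\phi(1)\bigr)$ equals the number of $\Z_p^h$-invariant $i$-element subsets of $S$. This is a positive integer whenever $S$ splits into invariant pieces of sizes $i$ and $p^d-i$. So each non-transitive idempotent lies in the image of $\trJ_{p^d}\otimes_\O A$.
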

\begin{proof}
The map $\chi_{p^d}$ is the character map, and so is an isomorphism.  

If $\gamma\colon \Z_p^h\ra \Sigma_m$ describes a non-transitive action
on $\{1,\dots,m\}$, then $\gamma(\Z_p^h)\leq \Sigma_i\times
\Sigma_{m-i}$ for some $0<i<m$.  Thus by formula for transfers from
subgroups \eqref{prop:induced-character}, we see that $\chi_{p^d}(\gamma)$ 
vanishes on the transfer ideal $\trJ_d$ for such $\gamma$, and thus
the map $\ol\chi_{p^d}$ exists.
\end{proof}

\begin{lemma}\label{lemma:tau-on-character}
  For $C$-algebras $A$ we have a commutative diagram
  \[\xymatrix@R=35pt{
    {\Witt_E(A)} \ar[r]^-{\gh_d} \ar[d]_{\sigma}
    & {\O_d\otimes_\O A} \ar[r]^-{\ol\chi_{p^d}}_-{\approx} \ar[d]^-{\tr}
    & {\prod_{\substack{H\leq \Z_p^h \\ [\Z_p^h:H]=p^d}} \hspace{-1em}
      A} \ar[d]^-{\text{sum}}
      \\
      {\Witt_\Z(A)} \ar[r]_-{\gh_{p^d}}
      & {A} \ar@{=}[r]
      & {A}
  }\]
\end{lemma}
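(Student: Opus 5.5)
The diagram splits into two squares, and I would verify each separately.

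\emph{Right-hand square.} This asks that the $A$-linear trace of multiplication on $\O_d\otimes_\O A$ equal the sum of the coordinates under $\ol\chi_{p^d}$. The $\O$-linear trace on the free module $\O_d$ of rank $N^h_{p^d}$ \eqref{prop:o-d-free-rank} base changes to the $A$-linear trace on $\O_d\otimes_\O A$. By \eqref{prop:character-ghost}, $\ol\chi_{p^d}$ is an isomorphism of $A$-algebras onto $\prod_H A$, with $H$ ranging over the $N^h_{p^d}$ subgroups of index $p^d$. The trace of multiplication by $(x_H)$ on $\prod_H A$ is $\sum_H x_H$, since the idempotents form a basis. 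Trace is invariant under algebra isomorphism, so this square commutes.

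\emph{Outer rectangle.} Here I use \eqref{prop:character-formula-for-tau}. First take logarithms on both sides of that identity; this is legitimate because $A$ is a $C$-algebra, hence a $\Q$-algebra, since $C$ is a $p^{-1}\O$-algebra and $\O$ is $p$-local. The resulting statement is
\[
\log s(\sigma(a)) = \sum_{d\geq 0}\frac{X^{p^d}}{p^d}\sum_{[\Z_p^h:H]=p^d}\chi_{p^d}(\Z_p^h/H)(a_{p^d}).
\]
On the other hand, for any $b\in\Witt_\Z(A)$ the relation $\sum s_m X^m=\exp\bigl[\sum_n \frac{w_n}{n}X^n\bigr]$ from \S\ref{subsec:integral-witt} gives
\[
\log s(b)=\sum_{n\geq1}\frac{\gh_n(b)}{n}X^n.
\]
Comparing coefficients of $X^{p^d}$ yields $\gh_{p^d}(\sigma(a))=\sum_H \chi_{p^d}(\Z_p^h/H)(a_{p^d})$. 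Comparing coefficients of $X^m$ with $m$ not a power of $p$ gives $\gh_m(\sigma(a))=0$, which is the last clause of \eqref{prop:tau-on-ghost}.

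It remains to identify $\chi_{p^d}(\Z_p^h/H)(a_{p^d})$ with the $H$-coordinate of $\ol\chi_{p^d}(\gh_d(a))$. This holds by the commutative square in \eqref{prop:character-ghost}: $\gh_d(a)$ is the image of $a_{p^d}$ in $\O_d\otimes_\O A$, and the transitive conjugacy classes of homomorphisms $\Z_p^h\to\Sigma_{p^d}$ correspond exactly to the subgroups $H$ of index $p^d$. Hence the composite across the top and down the right-hand side equals $\gh_{p^d}\circ\sigma$.

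The only delicate point is checking that the logarithm and exponential conventions match exactly. In particular, one must confirm that the ghost components of $\Witt_\Z$ are read off from $\log s$ with the factor $1/n$, as the displayed formula in \S\ref{subsec:integral-witt} indicates.
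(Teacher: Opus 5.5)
Your proposal is correct and follows essentially the same route as the paper: you read off $\gh_{p^d}(\sigma(a))=\sum_H \chi_{p^d}(\Z_p^h/H)(a_{p^d})$ from the exponential formula \eqref{prop:character-formula-for-tau}, and you identify the summands with the coordinates of $\ol\chi_{p^d}(\gh_d(a))$ via \eqref{prop:character-ghost}. The paper leaves implicit both the logarithm comparison and the check that trace corresponds to summing coordinates under the algebra isomorphism $\ol\chi_{p^d}$; you spell out both, which is a welcome addition.
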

\begin{proof}
  Let  $a\in \Witt_E(A)$.  Then we have
  \[
  \ol{\chi}_{p^d}(\gh_d(a)) = \sum_{\substack{H\leq \Z_p^h\\
      [\Z_p^h:H]=p^d}} \chi_{p^d}(\Z_p^h/H)(a_{p^d}) = \gh_{p^d}(\sigma(a)),
\]
where the first equality is \eqref{prop:character-ghost}, while
we can read off the second equality from 
\eqref{prop:character-formula-for-tau}.  
\end{proof}

We can now prove the main result of this section.
\begin{proof}[Proof of \eqref{prop:tau-on-ghost}]
  The statement is immediate for $p$-torsion free $A$ by
  \eqref{lemma:tau-on-character} and the fact that  $p^{-1}\O\ra C$ is
  faithfully flat.  The general statement follows by reduction to the
  universal example (the tautological element of
  $\Witt_E(\mc{O}\{x\})$).
\end{proof}

\begin{rem}
  The ghost components applied to $\sigma$ encode the action of
  ``Hecke operators''. 
  Suppose $A$ is a 
  $\mb{T}$-algebra (e.g., $A=\pi_0R$ for some $K(n)$-local
  commutative $E$-algebra), and write $P\colon A\ra
  \Witt_E(A)\subseteq \prod E^0B\Sigma_m\otimes_\O A$ for its total
  power operation.  Then
  \[
  \gh_d( \sigma(P(a))) = p^dT(p^d)(a),
\]
where $T(p^d)\colon A\ra p^{-1}A$ is  as in
\cite{rezk-units-and-logs}*{\S1.12}.  The components $\sigma_n$ of
$\sigma(x)= \sum_{n\geq0} \sigma_n(x)\, X^n$ are the symmetric power
operations introduced by Ganter \cite{ganter-orb-genera}.
\end{rem}

\section{The splitting of $\Witt_E$}
\label{sec:splitting}

We can now split $\Witt_p$ off of $\Witt_E$.  In what follows, we use
$\Witt_p$ and $\Witt_\Z$ to also denote the restriction of those
functors to $\O$-algebras.
\begin{thm}\label{thm:witt-splitting}
  There exist unique natural maps $\beta_p\colon \Witt_p\ra \Witt_E$ and
  $\sigma_p\colon \Witt_E\ra \Witt_p$ fitting into the commutative square
  \[\xymatrix{
    {\Witt_\Z} \ar[r]^-{\beta} \ar@{->>}[d]_{\pi} \ar@/^2pc/[rr]^{\AH^h_\Z\cdot}
    & {\Witt_E} \ar[r]^-{\sigma}  \ar@{>->}[dr]_{\sigma_p}
    & {\Witt_\Z} 
    \\
    {\Witt_p} \ar@{->>}[ur]_{\beta_p} \ar[rr]_{\AH^h_p\cdot}^{\approx}
    && {\Witt_p} \ar@{>->}[u]_{j} 
  }\]
of functors $\CAlg_\O\ra \Ab$.  Furthermore, $\beta_p$ is a natural
map of rings, and $\sigma_p$ is a natural map of $\Witt_p$-modules.
\end{thm}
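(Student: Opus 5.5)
The plan is to construct $\beta_p$ by showing that $\beta$ kills $\ker\pi$, and to define $\sigma_p\defeq\pi\circ\sigma$. I would then verify every remaining identity on ghost components, first for $C$-algebras (equivalently, for $p$-torsion free $A$). The general case follows by passing to the universal examples, whose corepresenting rings are torsion free. Uniqueness is automatic: $\pi$ is surjective and $j$ is injective, so $\beta_p$ is determined by $\beta=\beta_p\pi$, and $\sigma_p$ is determined by $j\sigma_p=\sigma$ (or by $\sigma_p\beta_p=\AH^h_p\cdot$ together with surjectivity of $\beta_p$ onto the relevant image).

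\emph{Step 1: $\beta$ factors through $\pi$.} In terms of corepresenting objects, $\beta$ is given by $\beta^*\colon T\ra T_\Z\otimes\O$, and I must show that its image lies in $T_p\otimes\O$. Since $T_\Z$ is torsion free and $T_p$ is a summand of $T_\Z$ (so $T_p$ is saturated), it suffices to check this after base change to $C$ (faithfully flat over $p^{-1}\O$). There $T_\Z\otimes C=C[w_n]$ and $T_p\otimes C=C[w_{p^d}]$.

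For a $C$-algebra $A$ and $a\in\Witt_\Z(A)$, the element $\beta(a)$ is determined by the characters $\chi_m(S)(\beta(a)_m)$ over finite $\Z_p^h$-sets $S$. By the multiplicativity $a_{i+j}|=a_i\boxtimes a_j$, these reduce to the values on transitive $S=\Z_p^h/H$. By \eqref{prop:character-ghost} those values factor through $\gh_d(\beta(a))$, which equals $1\otimes\gh_{p^d}(a)$ by \eqref{prop:beta-on-ghost}. Hence $\beta^*(t)\otimes C$ is a polynomial in the $w_{p^d}$, i.e.\ lies in $T_p\otimes C$. This gives $\beta_p$, and $\beta_p$ is a ring map because $\beta$ is a ring map and $\pi$ is a surjective ring map.

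\emph{Step 2: the identity $\sigma\beta=\AH^h_\Z\cdot$.} Combining \eqref{prop:beta-on-ghost} and \eqref{prop:tau-on-ghost}, we get $\gh_{p^d}(\sigma\beta(a))=\tr_{\O_d}(1)\,\gh_{p^d}(a)=N^h_{p^d}\gh_{p^d}(a)$, and $\gh_m(\sigma\beta(a))=0$ for $m$ not a power of $p$. By \eqref{lemma:ht-h-artin-hasse} these are exactly the ghost components of $\AH^h_\Z\cdot a$. Ghost components are jointly injective on $\Witt_\Z(A)$ for torsion free $A$, so the identity holds there, and hence in general by universality. Then \eqref{prop:p-typical-witt-summand} gives $\sigma\beta=j\circ(\AH^h_p\cdot)\circ\pi$, which is the outer square.

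\emph{Step 3: $\sigma$ lands in $j(\Witt_p)$, so that $\sigma=j\pi\sigma=j\sigma_p$.} Here I expect the main obstacle, since it needs $\Witt_p$ to be detected inside $\Witt_\Z$ by the vanishing of non-$p$-power ghosts. I would argue with corepresenting maps again. The image of $j$ is the image of the idempotent $\AH^1_\Z\cdot$, which on torsion free rings consists precisely of the elements whose $\gh_m$ vanish for $m\neq p^d$. The last clause of \eqref{prop:tau-on-ghost} says $\sigma$ has this property. So $\AH^1_\Z\cdot\sigma=\sigma$ holds on ghosts, hence universally, and therefore $\sigma=j\pi\sigma$.

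\emph{Step 4: $\sigma_p$ is a map of $\Witt_p$-modules, via $\beta_p$.} It suffices to check $\sigma(\beta(x)y)=x\cdot\sigma(y)$ on ghosts. Since $\gh_d(\beta(x))=1\otimes\gh_{p^d}(x)$ is a scalar, $\O$-linearity of the trace gives $\tr(\gh_d(\beta(x))\gh_d(y))=\gh_{p^d}(x)\tr(\gh_d(y))$. This holds in the torsion free case, and extends to the general case by universality. Applying $\pi$ then gives the $\Witt_p$-linearity of $\sigma_p$.
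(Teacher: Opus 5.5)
Your proof is correct, and its skeleton is the paper's. Everything is checked on ghost components and transported to the general case via the torsion-free universal examples. The identity $\sigma\beta=\AH^h_\Z\cdot$ comes from \eqref{prop:beta-on-ghost}, \eqref{prop:tau-on-ghost} and \eqref{lemma:ht-h-artin-hasse}. Your Step 3 ($\AH^1_\Z\cdot\sigma=\sigma$) is exactly the paper's $(e\cdot)\circ\sigma=0$ with $e=1-\AH^1_\Z$.

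The one genuine difference is Step 1. The paper factors $\beta$ through $\pi$ by checking $\beta\circ(e\cdot)=0$ on ghosts and using $\Ker\pi=e\Witt_\Z$. That tacitly relies on the ghost components of $\Witt_E$ being jointly injective on torsion-free $\O$-algebras, which the paper asserts but does not prove. You instead prove that detection statement over $C$-algebras via HKR characters: multiplicativity reduces to transitive $\Z_p^h$-sets, which \eqref{prop:character-ghost} identifies with $\gh_d$. You then conclude that $\beta^*$ lands in the saturated subring $T_p\otimes\O$. This costs a little more. You need the saturation argument, and a small step from ``$\beta(a)$ depends only on the $\gh_{p^d}(a)$'' to ``$\beta^*(t)\otimes 1\in C[w_{p^d}]$''; for example, compare the universal element with the Witt vector whose non-$p$-power ghost components are zero. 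In return, the argument is self-contained on exactly the point the paper leaves implicit.

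Your Step 4 also supplies the $\Witt_p$-linearity of $\sigma_p$, which the statement asserts but the paper's proof never checks. Your parenthetical alternative for the uniqueness of $\sigma_p$, via $\sigma_p\beta_p=\AH^h_p\cdot$, does not work, since $\beta_p$ is not surjective onto $\Witt_E$. Injectivity of $j$ already suffices, so nothing is lost.
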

\begin{proof}
For each of the functors $W\in \{\Witt_\Z, \Witt_p, \Witt_E\}$, their
collections of ghost components give injective homomorphisms when
evaluated on torsion free $\O$-algebras.  Since each of the $W$ is
corepresented by a torsion free $\O$-algebra, we can deduce any
identity of natural maps between them by proving they have the
same effects on ghost components.

Thus, by \eqref{prop:beta-on-ghost} and \eqref{prop:tau-on-ghost} we have that
$\gh_{p^d}(\sigma\beta(a))=N^h_{p^d}\gh_{p^d}(a)$, and $\gh_m(\sigma\beta(a))=0$
if $m$ is not a $p$th power.  Therefore $\sigma\beta(a)=\AH^h_\Z a$ using
\eqref{lemma:ht-h-artin-hasse}.

The same idea shows that both composites
\[
\Witt_\Z \xra{e\cdot} \Witt_\Z \xra{\beta} \Witt_E, \qquad
\Witt_E \xra{\sigma} \Witt_\Z \xra{e\cdot} \Witt_\Z
\]
are equal to $0$, where $e=1-\AH^1_\Z$.  The existence of $\beta_p$ and
$\sigma_p$ follows, as   
\eqref{prop:p-typical-witt-summand} implies that $\Im \pi= e\Witt_\Z$
and $\Ker j=   
\operatorname{Ann}(e)$.  This implies  $\sigma_p\beta_p(a)=\AH_p^h\cdot
a$, whence $\sigma_p\beta_p$ is a bijection since $\AH^h_p\in
\Witt_p(\Z_{(p)})^\times$. 
\end{proof}

\begin{prop}\label{prop:witt-filtration-restriction}
The Witt filtration of $\Witt_E$ restricts to that of $\Witt_p$: i.e.,
$\Witt_p(A)\cap I_d(A)$ is the $d$th Witt filtration ideal in
$\Witt_p(A)$.  
\end{prop}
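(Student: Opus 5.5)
The idea is to regard $\Witt_p(A)$ as a subring of $\Witt_E(A)$ via $\beta_p$, and to prove two inclusions. Here $\beta_p$ is injective, since $\sigma_p\beta_p$ is multiplication by the unit $\AH^h_p$ (\eqref{thm:witt-splitting}). Write $\Wideal^p_d(A)\subseteq\Witt_p(A)$ for the classical Witt filtration ideal: the $a=(a_m)$ with $a_m=0\in R_p\Sigma_m\otimes A$ whenever $p^d\nmid m$. Since $T_p\approx\Z[x_{p^k}]$ with $x_{p^k}$ of degree $p^k$, this is equivalently the set of $a$ whose $p$-typical Witt coordinates $x_{p^k}(a)$ vanish for $k<d$, exactly as in \eqref{cor:pth-powers}. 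The claim to prove is $\beta_p^{-1}(\Wideal_d(A))=\Wideal^p_d(A)$.

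\textbf{Step 1: $\beta_p(\Wideal^p_d)\subseteq\Wideal_d$.} I would first show that each $b_m\colon R\Sigma_m\ra E^0B\Sigma_m$ factors through $R_p\Sigma_m$. Let $P\leq\Sigma_m$ be a $p$-Sylow subgroup. Then $\Res^{\Sigma_m}_P\colon E^0B\Sigma_m\ra E^0BP$ is injective, because $\Tr\circ\Res$ is multiplication by the Euler characteristic of $\Sigma_m/P$, a $p$-local unit. The maps $b$ commute with restriction, so $b_m$ kills $\ker(R\Sigma_m\ra R_p\Sigma_m)$. Consequently $\beta_p(a)_m=\bar b_m(a_m)$ componentwise, and vanishing of $a_m$ for $p^d\nmid m$ gives vanishing of $\beta_p(a)_m$.

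\textbf{Step 2: $\sigma_p(\Wideal_d)\subseteq\Wideal^p_d$.} From the diagram of \eqref{thm:witt-splitting} and $\pi j=\id$ we get $\sigma_p=\pi\sigma$. Let $x\in\Wideal_d(A)$, so $x_m=0$ for $0<m<p^d$. Then $s(\sigma(x))=\sum_m\Tr^e_{\Sigma_m}(x_m)X^m\equiv 1\pmod{X^{p^d}}$. Writing this as $\prod_k(1-x_kX^k)^{-1}$ shows $x_k(\sigma(x))=0$ for $k<p^d$. The projection $\pi$ is induced by $T_p\subseteq T_\Z$, which sends $x_{p^k}\mapsto x_{p^k}$. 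Hence the $p$-typical coordinates $x_{p^k}(\sigma_p(x))$ vanish for $k<d$, i.e.\ $\sigma_p(x)\in\Wideal^p_d(A)$.

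\textbf{Step 3: conclusion.} Suppose $\beta_p(a)\in\Wideal_d(A)$. Step 2 gives $\AH^h_p\cdot a=\sigma_p\beta_p(a)\in\Wideal^p_d(A)$. Since $\Wideal^p_d(A)$ is an ideal and $\AH^h_p$ is a unit (\eqref{prop:ht-h-artin-hasse-in-fp}), it follows that $a\in\Wideal^p_d(A)$. Combined with Step 1, this proves the claim.

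I expect the main obstacle to be the bookkeeping between the two descriptions of the filtration. One must be sure that "$a_m=0$ in $R_p\Sigma_m\otimes A$ for $0<m<p^d$" really matches vanishing of the coordinates $x_{p^k}$ for $k<d$. One must also be sure that $\pi$ acts on these coordinates as stated, using that the generators $x_{p^k}\in T_\Z$ actually lie in $T_p$. A secondary check is the factorization of $b_m$ through $R_p\Sigma_m$ in Step 1, which I would do via the Sylow transfer argument above, working $p$-locally.
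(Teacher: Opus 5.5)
Your strategy is the paper's own. The paper's proof just asserts that $\beta_p$, $\sigma_p$ and multiplication by $\AH^h_p$ preserve the filtrations. Your Steps 2 and 3 correctly fill in the details for $\sigma_p$ (via $\sigma_p=\pi\sigma$ and the big Witt coordinates of $s(\sigma(x))$) and for the final deduction. The gap is in the justification of Step 1.

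The sentence ``the maps $b$ commute with restriction, so $b_m$ kills $\ker(R\Sigma_m\to R_p\Sigma_m)$'' presupposes a map $RP\to E^0BP$ compatible with $b_m$ under restriction, and no such map exists. The maps $b_m$ are defined only for symmetric groups, through $R\Sigma_m\cong\oA(m)$; what is natural in the group is the Burnside map $A(-)\to E^0B(-)$. Take $x\in\ker(R\Sigma_m\to RP)$ with submissive lift $\tilde x\in\oA(m)$. Then $\Res^{\Sigma_m}_P b_m(x)$ is the image of $\Res_P\tilde x\in A(P)$. You only know $\ell(\Res_P\tilde x)=0$, i.e.\ that its marks at \emph{cyclic} subgroups vanish. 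By the induced character formula, however, the image of a $P$-set in $E^0BP\otimes_\O C$ records its marks at all images of $\Z_p^h$, and for $h\geq 2$ these include non-cyclic groups. For example, let $P=C_p\times C_p$ (the Sylow subgroup of $\Sigma_{2p}$, $p$ odd). The element $[P/e]-\sum_{|L|=p}[P/L]+p[P/P]$ lies in $\ker\ell$ but has mark $p$ at $P$, so it is nonzero in $E^0BP$. Hence injectivity of $\Res^{\Sigma_m}_P$ alone does not yield the factorization. A minor point as well: $\Tr\circ\Res$ is multiplication by $\Tr^{\Sigma_m}_P(1)\in E^0B\Sigma_m$, not by an integer. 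It is a unit because its augmentation is $[\Sigma_m:P]$ and $E^0B\Sigma_m$ is local.

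The factorization is nevertheless true, but it needs a specific property of submissive sets. The mark of $\Sigma_m/(\Sigma_{m_1}\times\cdots\times\Sigma_{m_r})$ at $A\leq\Sigma_m$ depends only on the multiset of $A$-orbit sizes on $\{1,\dots,m\}$. For an abelian $p$-group $A$, this multiset is the cycle type of some $p$-element $g$, so marks at $A$ are character values at $g$. Freeness of $E^0B\Sigma_m$ over $\O$ then lets you conclude from characters.

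It is simpler to bypass the factorization entirely. Given $\bar a\in\Wideal^p_d(A)$, lift it to $a\in\Witt_\Z(A)$ with $x_{p^k}(a)=x_{p^k}(\bar a)$ and $x_k(a)=0$ for $k$ not a power of $p$. This is possible since $T_\Z=\Z[x_k]$ and $\pi$ is restriction along $T_p\subseteq T_\Z$. Then $a$ kills every $x_k$ with $k<p^d$, hence all of $T_\Z$ in degrees $0<m<p^d$, so $a_m=0$ there. Therefore $\beta_p(\bar a)=\beta(a)$ has $\beta(a)_m=(b_m\otimes\id_A)(a_m)=0$ for $0<m<p^d$, i.e.\ it lies in $\Wideal_d(A)$ by \eqref{cor:pth-powers}. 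With this replacement for Step 1, your argument is complete.
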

\begin{proof}
Both $\beta_p$ and $\sigma_p$ preserve the relevant Witt filtrations, as
does multiplication by $\AH^h_p$ on $\Witt_p$.  The claim follows.
\end{proof}

We can explicitly compute the composite of $\sigma_p$ and $\beta_p$ in 
characteristic $p$.
\begin{prop}\label{prop:where-does-one-go}
If $A$ is an $\O$-algebra of characteristic $p$, then
$\sigma_p\beta_p\colon \Witt_p(A)\ra \Witt_p(A)$ is multiplication by the unit
$N^h_{p^\infty} = [(1-p)\cdots (1-p^{h-1})]^{-1}$.  
\end{prop}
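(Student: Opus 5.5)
By Theorem \eqref{thm:witt-splitting}, $\sigma_p\beta_p$ is multiplication by $\AH^h_p\in\Witt_p(\Z_{(p)})$. The plan is to show that, on an $\O$-algebra $A$ of characteristic $p$, multiplication by $\AH^h_p$ agrees with multiplication by the image of $N^h_{p^\infty}\in\Z_p$.

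First, I will reduce to a statement about scalars. The $\Witt_p(A)$-module structure on $\Witt_p(A)$ is multiplication in the ring $\Witt_p(A)$. The element $\AH^h_p$ acts through its image under the ring map $\Witt_p(\Z_{(p)})\ra\Witt_p(A)$ induced by $\Z_{(p)}\ra A$. Since $A$ has characteristic $p$, this ring map factors as
\[
\Z_{(p)}\ra\F_p\ra A,
\]
so the acting element is the image of $\AH^h_p$ under
\[
\Witt_p(\Z_{(p)})\ra\Witt_p(\F_p)\ra\Witt_p(A).
\]

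Second, I will identify this image. By Proposition \eqref{prop:ht-h-artin-hasse-in-fp}, the image of $\AH^h_p$ in $\Witt_p(\F_p)$, identified with $\Z_p$ via Corollary \eqref{cor:witt-fp-is-p-padics}, is $N^h_{p^\infty}$. Since $\Witt_p(\F_p)$ is a ring and the identification with $\Z_p$ is a ring isomorphism, the element $N^h_{p^\infty}$ is the image of the $p$-adic integer $N^h_{p^\infty}$ under the unique continuous ring map $\Z_p\ra\Witt_p(\F_p)$. Hence the element acting on $\Witt_p(A)$ is $N^h_{p^\infty}\cdot 1$.

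The main point needing care is what $N^h_{p^\infty}\cdot1$ means in $\Witt_p(A)$ when $N^h_{p^\infty}$ is a $p$-adic integer rather than a rational one. For $A$ of characteristic $p$, $\Witt_p(A)$ is a $\Witt_p(\F_p)=\Z_p$-algebra, since $\Witt_p$ is a functor and $A$ is an $\F_p$-algebra. So multiplication by $N^h_{p^\infty}$ is defined and equals multiplication by the image of $\AH^h_p$. This gives the claim.

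The element $N^h_{p^\infty}$ is a unit of $\Z_p$, since it is $\equiv1\pmod p$. Its value $[(1-p)\cdots(1-p^{h-1})]^{-1}$ comes from Lemma \eqref{lemma:Nhp-limit}. Beyond this, the argument is a formal consequence of functoriality of $\Witt_p$.
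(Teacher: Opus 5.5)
Your proposal is correct and follows the paper's own argument: use Theorem \eqref{thm:witt-splitting} to see that $\sigma_p\beta_p$ is multiplication by $\AH^h_p$, factor $\Witt_p(\Z_{(p)})\ra\Witt_p(A)$ through $\Witt_p(\F_p)\approx\Z_p$, and read off the image $N^h_{p^\infty}$ from Proposition \eqref{prop:ht-h-artin-hasse-in-fp}. Your added remark that $\Witt_p(A)$ is a $\Witt_p(\F_p)=\Z_p$-algebra by functoriality correctly explains why multiplication by the $p$-adic unit $N^h_{p^\infty}$ makes sense, which the paper leaves implicit.
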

\begin{proof}
  We just need to compute the image of $\AH_p^h$ under the composite of
  \[
  \Witt_p(\Z_{(p)}) \ra \Witt_p(\F_p) \ra \Witt_p(A),
\]
so the claim follows from \eqref{prop:ht-h-artin-hasse-in-fp}.
\end{proof}

%%% bibliography

\end{document}